\numberwithin{equation}{section}
\DeclareMathOperator*{\argmax}{arg\,max}
\newtheorem{theorem}{Theorem}[section]
\newtheorem{corollary}{Corollary}[section]
\newtheorem{lemma}{Lemma}[section]
\newtheorem{proposition}{Proposition}[section]
\newtheorem{definition}{Definition}[section]
\newtheorem{remark}{Remark}[section]
\newtheorem{example}{Example}[section]
\newcommand{\propref}{Proposition~\ref}
\renewcommand{\P}{\mathbb{P}}
\newcommand{\R}{\mathbb{R}}
\newcommand{\E}{\mathbb{E}}
\newcommand{\N}{\mathbb{N}}
\newcommand{\cQ}{\mathcal{Q}}
\newcommand{\A}{\mathcal{A}}
\newcommand{\eps}{\varepsilon}
\newcommand{\fP}{\mathfrak{P}}
\newcommand{\fT}{\mathfrak{T}}
\newcommand{\bq}{{\bf q}}
\newcommand{\nada}[1]{}
\definecolor{gb}{rgb}{0, 0.2, 0.8}
\title{Strong and Weak Equilibria for Time-Inconsistent Stochastic Control in Continuous Time}
\author{Yu-Jui Huang\thanks{
University of Colorado, Department of Applied Mathematics, Boulder, CO 80309-0526, USA, email: \texttt{yujui.huang@colorado.edu}. Partially supported by National Science Foundation (DMS-1715439) and the University of Colorado (11003573).}
 \and Zhou Zhou\thanks{
University of Sydney, School of Mathematics and Statistics, NSW 2006, Australia, email: \texttt{zhou.zhou@sydney.edu.au}.}
}
\date{\today}
\begin{document}
\maketitle

\begin{abstract}
A {\it new} definition of continuous-time equilibrium controls is introduced. 
As opposed to the standard definition, which involves a derivative-type operation, the new definition parallels how a discrete-time equilibrium is defined, and allows for unambiguous economic interpretation. The terms ``{\it strong equilibria}'' and ``{\it weak equilibria}'' are coined for controls under the new and the standard definitions, respectively. When the state process is a time-homogeneous continuous-time Markov chain, a careful asymptotic analysis gives complete characterizations of weak and strong equilibria. 
Thanks to Kakutani-Fan's fixed-point theorem, general existence of weak and strong equilibria is also established, under additional compactness assumption. Our theoretic results are applied to a two-state model under non-exponential discounting. In particular, we demonstrate explicitly that there can be incentive to deviate from a weak equilibrium, which justifies the need for strong equilibria. Our analysis also provides new results for the existence and characterization of discrete-time equilibria under infinite horizon. 
\end{abstract}

\textbf{MSC (2010):} 
60J27,  
91A13, 
93E20. 
\smallskip

\textbf{Keywords:} time-inconsistency, stochastic control, strong equilibria, weak equilibria, non-exponential discounting.


\section{Introduction}
Under time-inconsistency, an optimal rule derived today may not be optimal from the eyes of a future self. There is no ``dynamically optimal strategy'' that is good for the whole planning horizon, as opposed to standard time-consistent models.  A sensible reaction to time-inconsistency, introduced in Strotz \cite{Strotz55}, is to take future selves' behavior as a constraint, and find the best current action in response to that. 
When every future self also reasons in this way, the resulting strategy is a (subgame perfect) equilibrium, from which no future self has incentive to deviate.  
 
This equilibrium approach, while widely-accepted, is highly nontrivial for stochastic control in continuous time. The upfront challenge is how to precisely define a continuous-time equilibrium. 

In discrete time, this is not a challenge at all:  Let $F(x,\alpha)$ be an objective function, depending on current state $x$ and the selected control $\alpha$. An equilibrium  $\alpha^*$ can be defined as
\begin{equation}\label{discrete def}
F(x,\alpha^*) \ge F(x,\alpha\otimes_1 \alpha^*),\quad \forall x\ \hbox{and}\ \alpha,
\end{equation}
where $\alpha\otimes_1 \alpha^*$ means that we apply $\alpha$ only at time $0$, and switch to $\alpha^*$ from time 1 on. The economic interpretation is clear: given that all future selves will follow $\alpha^*$, using any other control $\alpha$ at current time is no better than sticking to $\alpha^*$, i.e. no incentive for the current self to deviate from $\alpha^*$, conforming to the equilibrium idea. A continuous-time analogy to \eqref{discrete def} is far from obvious. Since the current self only exists at ``time point 0'', which carries no mass in continuous time, his decision to use a different strategy $\alpha$ normally has no effect on $F$. In other words, while one could replace the right hand side of \eqref{discrete def} by $\lim_{\eps\downarrow 0} F(x,\alpha\otimes_\eps \alpha^*)$, in most cases this limit equals $F(x,\alpha^*)$, leaving the comparison like \eqref{discrete def} meaningless.

Ekeland and Lazrak \cite{EL06} provided, for the first time, a precise definition of a continuous-time equilibrium: 
roughly speaking, $\alpha^*$ is an equilibrium if  
\begin{equation}\label{continuous def}
\liminf_{\eps\downarrow 0} \frac{F(x,\alpha^*)-F(x,\alpha\otimes_\eps \alpha^*)}{\eps} \ge 0,\quad \forall x\ \hbox{and}\ \alpha.
\end{equation}
This formulation has spurred vibrant research on time-inconsistent control problems in continuous time, arising mainly in mathematical finance; see \cite{EP08}, \cite{EMP12}, \cite{HJZ12}, \cite{Yong12}, \cite{BMZ14}, and \cite{BKM17}, among many others. While \eqref{continuous def} has to some extent become the standard formulation of continuous-time equilibria, 
it may {\it not} be fully justified in the economic sense.

As pointed out in Bj\"{o}rk, Khapko, and Murgoci \cite[Remark 3.5]{BKM17}, \eqref{continuous def} does not correspond perfectly to the equilibrium concept: when \eqref{continuous def} holds with equality, $\alpha^*$ can be a stationary point that is not a maximum point. That is, it is possible that for some $x$ and $\alpha$, $F(x,\alpha^*)< F(x,\alpha\otimes_\eps \alpha^*)$ for all $\eps>0$, but the limit in \eqref{continuous def} is still zero. Then, when the current self is at the state $x$, there {\it is} incentive to deviate: following $\alpha$, in a however small interval $[0,\eps]$, is better than sticking to $\alpha^*$. In view of this,  $\alpha^*$ should not be considered as an equilibrium, {\it yet} it is included under \eqref{continuous def}. In short, \eqref{continuous def} may be too weak a definition to precisely reflect the equilibrium idea.

In this paper, a {\it new} definition of continuous-time equilibria is introduced: 
$\alpha^*$ is an equilibrium if for any $x$ and $\alpha$, there exists $\eps^* =\eps^*(x,\alpha)>0$ such that 
\begin{equation}\label{continuous def'}
F(x,\alpha^*)\ge F(x,\alpha\otimes_\eps \alpha^*)\quad \hbox{for all $0<\eps<\eps^*$}. 
\end{equation}
This is analogous to \eqref{discrete def}, and admits the following 
economic interpretation: if \eqref{continuous def'} is violated for some $(x,\alpha)$, then for the current self at the state $x$, deviating to $\alpha$, in a however small interval $[0,\eps]$, is better than sticking to $\alpha^*$. Such incentive to deviate disappears when $\alpha^*$ is an equilibrium; see Remark~\ref{rem:interpretation} for details. Note that \eqref{continuous def'} entails \eqref{continuous def}, but not vice versa. Throughout this paper, we will call equilibria under \eqref{continuous def'} ``strong equilibria'' (Definition~\ref{def:strong E}), and those under \eqref{continuous def} ``weak equilibria'' (Definition~\ref{def:weak E}). The main goal of this paper is to elucidate the difference, as well as the connection, between strong and weak equilibria.\footnote{A notion similar to our ``strong equilibria'' appeared in He and Jiang \cite{HJ17} for a mean-risk portfolio selection problem. Yet, a detailed comparison between weak and strong equilibria was not their focus.} 

Specifically, we take the controlled state process $X$ to be a time-homogeneous continuous-time Markov chain. By selecting an appropriate generator $Q$ for $X$, an agent intends to maximize his expected cumulative running payoff over infinite horizon. The running payoff function is allowed to be time-dependent, making the problem time-inconsistent in general. This framework particularly covers optimization under non-exponential discounting. 

By detailed asymptotic analysis of the right hand side of \eqref{continuous def'}, now taking the form $F(x,Q\otimes_\eps Q^*)$, 
we establish complete characterizations of both weak and strong equilibria; see Theorems~\ref{thm:weak E iff} and \ref{thm:strong E iff}. In short, an equilibrium being weak amounts to dominance in the first-order term, while being strong demands a more delicate structure involving higher-order terms. This in turn leads to a handy machinery for finding weak and strong equilibria; see Propositions~\ref{prop:finding weak E}, \ref{coro:sufficient for strong E},~\ref{prop:sufficient for strong E}, and Theorem \ref{thm:strong E iff}. In a two-state model under pseudo-exponential discounting, such a machinery is demonstrated in detail through concrete examples. In particular, we construct explicitly a weak equilibrium $Q^*$ such that for some specific $(x,Q)$, $F(x,Q^*)< F(x,Q\otimes_\eps Q^*)$ for all $\eps>0$ small enough. That is, although $Q^*$ satisfies \eqref{continuous def}, there {\it is} incentive to deviate from $Q^*$ in a however small interval $[0,\eps]$, when the current state is $x$; see Example~\ref{eg:3} and Remark~\ref{rem:deviate weak E}. This justifies the need for the new notion of strong equilibria.

Note that the machinery for finding equilibria, while useful, is meant to be applied on a case-by-case basis, and does not say {\it a priori} whether an equilibrium exists. Thanks to Kakutani-Fan's fixed-point theorem, a general existence result for weak and strong equilibria can be established, under additional compactness assumption on the admissible set of $Q$; see Theorem~\ref{thm:existence}.

The literature of time-inconsistent stochastic control in continuous-time, as mentioned above, focuses solely on weak equilbria, which are usually characterized as (i) solutions to a system of nonlinear differential equations, the so-called {\it extended HJB system} (see e.g. \cite{EP08} and \cite{BKM17}), or (ii) the limit points of a sequence of discrete-time equilibria, when the discrete time mesh tends to zero (see e.g. \cite{Yong12} and \cite{BKM17}). Our analysis complements both (i) and (ii) above. 

First, note that (i) above is a {\it partial} characterization: if one finds smooth solutions to the nonlinear system, an equilibrium can be constructed from them. Yet, solving the system is generally difficult, and it is also not clear whether every equilibrium is related to such a system. By contrast, in our case where $X$ is a continuous-time controlled Markov chain, more tractable than a controlled diffusion mostly used in the literature, we obtain complete (i.e. ``if and only if'') characterization for weak equilibrium (Theorem~\ref{thm:weak E iff}), and an easy-to-check criterion for finding them (Proposition~\ref{prop:finding weak E}). On the other hand, we obtain in Theorem~\ref{thm:convergence} that discrete-time equilibria converge to weak equilibria in continuous time, under appropriate continuity assumption. Interestingly, it is guaranteed to converge only to weak, but not strong, equilibria. As shown in Example~\ref{eg:convergence}, a sequence of discrete-time equilibria converge uniquely to a weak equilibrium that is not strong. 


Finally, 
our continuous-time analysis also sheds new light on discrete-time problems.  
In discrete time, an equilibrium is defined unambiguously as \eqref{discrete def}, and it can be found by straightforward backward sequential optimization in Pollak \cite{Pollak68}, when time horizon is finite. Under infinite horizon, such backward procedure breaks down; it is unclear whether an equilibrium exists in general, and a systematic way for finding equilibria is lacking. The continuous-time arguments in Theorem~\ref{thm:weak E iff} and Proposition~\ref{prop:finding weak E} turn out to be helpful: they can be modified to discrete time, giving a very general existence result for equilibria, as well as a handy criterion for finding equilibria, for the kind of time-inconsistent problems we focus on; see Theorem~\ref{thm:existence'} and Proposition~\ref{prop:finding E}.   

The paper is organized as follows. Section~\ref{sec:setup} introduces the setup of our time-inconsistent problem, and defines the two distinct notions of weak and strong equilibria. Section~\ref{sec:results} collects the main results, including complete characterization and general existence for both weak and strong equilibria. Section~\ref{sec:examples} applies the theoretic results to a concrete two-state model, 
where we demonstrate explicitly that there can be incentive to deviate from a weak equilibrium. Section~\ref{sec:discrete} derives several new results for the corresponding discrete-time problem, and proves the convergence of discrete-time equilibria to a weak equilibrium. 
Section~\ref{sec:conclusions} concludes the paper.


\section{The Setup and Definitions}\label{sec:setup}
Let $X=(X_t)_{t\geq 0}$ be a time-homogeneous continuous-time Markov chain taking values in $S:=\{1,2,\dotso,N\}$, for some $N\in\N$. The generator $Q\in\R^{N\times N}$ of $X$ is to be controlled. For each $i\in S$, we denote by $Q_i$ the $i^{th}$ row of the generator $Q$, and let
\begin{equation}\label{D_i}
D_i\subseteq E_i := \bigg\{q=(q_1,\dotso,q_N)\in\R^N:\ q_j\geq 0,\ j\neq i,\ q_i=-\sum_{j\neq i}q_j\bigg\}
\end{equation}
be 
the admissible set of $Q_i$. The control space is then
\[
\cQ:=\big\{Q\in\R^{N\times N}:\ Q_i\in D_i,\ \forall i\in S\big\}.
\]

Consider a payoff function $f$ such that for any $t\ge 0$, $i\in S$, and $\bq\in D_i$, the value $f(t,i,\bq)\in \R$ stands for the payment rate at time $t$, given that $X_t=i$ and $Q_i=\bq$.
We assume that 
\begin{equation}\label{f conti.}
\hbox{$f(\cdot,i,\bq)$ is continuous on $[0,\infty)$, for each $i\in S$ and $\bq\in D_i$.} 
\end{equation}
In addition, we impose the integrability condition
\begin{equation}\label{integrability}
\int_0^\infty\left(\sup_{i\in S,\ \bq\in D_i,\ \|\bq\|\le c}|f(t,i,\bq)|\right)dt<\infty\ \quad \forall c>0, 
\end{equation}
where $\|\cdot\|$ denotes the Euclidean norm in $\R^N$. 
Note that \eqref{f conti.} particularly implies that $t\mapsto \sup_{i\in S,\bq\in D_i}|f(t,i,\bq)|$ is lower semicontinuous, and thus Lebesgue measurable, which makes sense of the integration in \eqref{integrability}. For any $i\in S$ and $Q\in\cQ$, \eqref{integrability} guarantees that the expected payoff 
\begin{equation}\label{F}
F(i,Q):=\E_{i,Q}\left[\int_0^\infty f(t,X_t,Q_{X_t})dt\right]<\infty
\end{equation}
is well-defined, where $\E_{i,Q}$ denotes the expectation conditioned on $X_0 =i$ and the generator of $X$ being $Q$. Throughout this paper, we will write $\E_i$ for $\E_{i,Q}$ whenever there is no confusion about $Q$. 

In general, an agent who aims to maximize $F(i,Q)$ by selecting $Q\in Q$ may run into the issue of time-inconsistency. Specifically, an optimal control $Q^*\in \cQ$ for the problem 
\begin{equation}\label{v}
\sup_{Q\in\cQ} \E_{i}\left[\int_0^\infty f(t,X_t,Q_{X_t})dt\right]
\end{equation}
may depend on the initial state $i$, and we write it as $Q^*(i)$. At a later date $t>0$ with $X_t = j \neq i$, $Q^*(i)$ may no longer be optimal for the problem \eqref{v}, now with $i$ replaced by $j$, so that the agent is tempted to deviate to $Q^*(j)$, optimal in his view at time $t$. 

A typical example is optimization under non-exponential discounting. In this case, $f$ takes the form
\begin{equation}\label{NE discounting}
f(t,i,\bq) = \delta(t) g(i,\bq),
\end{equation}
where $\delta:[0,\infty)\to [0,1]$ is a discount function, assumed to be strictly decreasing with $\delta(0)=1$, and $g$ is a general measurable function. It is well-known that the problem \eqref{v} is time-consistent for the specific case $\delta(t):= e^{-\rho t}$ for some $\rho>0$, but time-inconsistent in general. 

\begin{remark}
The $t$ variable in $f(t,i,\bq)$ does not represent ``real calendar time'', but ``time difference'', i.e. the difference between the current time and the time of a future payoff. This is well-demonstrated in the discounting setup \eqref{NE discounting}. If $t$ in $f(t,i,\bq)$ were real calendar time, \eqref{F} would be time-inhomogeneous (i.e. $F(i,Q)$ should be $F(t,i,Q)$). This would make the problem \eqref{v} time-consistent, and thus not of interest for our studies. 
\end{remark}

As described in Strotz \cite{Strotz55}, when an agent is sophisticated enough to realize that his ``future selves'' will override his current plan (due to the lack of commitment), a sensible reaction is to take his future selves' behavior as a constraint, and choose the best present action in response to that. Assuming that all future selves reason in the same way, the agent searches for a (subgame perfect) equilibrium strategy, from which no future self has incentive to deviate.

While such equilibrium strategies have a straightforward definition in discrete time (see e.g. Definition~\ref{def:E} below), finding a precise continuous-time formulation had been a long-standing challenge. Ekeland and Lazrak \cite{EL06} provided, for the first time, a rigorous definition of a continuous-time equilibrium, using a derivative-type operation. This has spurred vibrant research on time-inconsistent stochastic control in continuous time, as mentioned in the introduction.

To formulate an equilibrium in the sense of \cite{EL06}, we introduce, for any $Q,Q'\in\cQ$ and $\eps>0$, the concatenation of $Q$ and $Q'$ at time $\eps$, denoted by $Q\otimes_\eps Q'$. Using this concatenated generator means that the evolution of $X$ is governed first by $Q$ on the interval $[0,\eps]$, and then by $Q'$ on $(\eps,\infty)$.

\begin{definition}\label{def:weak E}
$Q^*\in\cQ$ is called a weak equilibrium, if 
\begin{equation}\label{weak}
\liminf_{\eps\downarrow 0} \frac{F(i,Q^*)-F(i,Q\otimes_\eps Q^*)}{\eps} \ge 0,\quad \forall Q\in\cQ\ \hbox{and}\ i\in S.
\end{equation}
\end{definition}
Definition~\ref{def:weak E} involves a first-order inequality. 
This was introduced in \cite{EL06} as the definition of a continuous-time equilibrium, and followed by all subsequent research. Despite its popularity, this formulation may {\it not} be fully justified economically.

Intuitively, what we desire from \eqref{weak} is $F(i,Q^*)\ge F(i,Q\otimes_\eps Q^*)$ as $\eps>0$ small enough, for all $(i,Q)$. 
Yet, this is not ensured by \eqref{weak}. As pointed out in Bj\"{o}rk, Khapko, and Murgoci \cite[Remark 3.5]{BKM17}, the standard formulation, such as \eqref{weak}, does not correspond perfectly to the equilibrium concept: when $\eqref{weak}$ holds with equality, it is unclear whether $Q^*$ is a maximum point or a stationary point. In other words, it is possible that for some $Q\in\cQ$ and $i\in S$, $F(i,Q^*)< F(i,Q\otimes_\eps Q^*)$ for all $\eps>0$, but the limit in \eqref{weak} is still zero. Then, the agent at the state $i$ {\it does} have incentive to deviate: following $Q$, in a however small interval $[0,\eps]$, is better than sticking to $Q^*$. As such,  $Q^*$ should not be considered as an equilibrium, {\it yet} it is included under \eqref{weak}. 

This explains the terminology ``{weak} equilibrium'' in Definition~\ref{def:weak E}.  As opposed to that, we introduce the new notion of a {\it strong} equilibrium. 

\begin{definition}\label{def:strong E}
$Q^*\in\cQ$ is called a strong equilibrium if, for any $i\in S$ and $Q\in\cQ$ there exists $\eps>0$ such that
\begin{equation}\label{strong}
F(i,Q^*) \ge F(i,Q\otimes_{\eps'} Q^*)\quad \forall 0<\eps'\le\eps.
\end{equation}
\end{definition}

\begin{remark}\label{rem:interpretation}
Definition~\ref{def:strong E} admits the following economic interpretation. If \eqref{strong} is violated for some $(i,Q)$, then there exist $\{\eps_n\}_{n\in\N}$ such that $\eps_n\downarrow 0$ and $F(i,Q^*) < F(i,Q\otimes_{\eps_n} Q^*)$ for all $n\in\N$. Thus, for the agent at the state $i$, deviating to $Q$, in a however small interval $[0,\eps_n]$, $n\in\N$, is better than sticking to $Q^*$. Such incentive to deviate disappears when $Q^*$ is a strong equilibrium.
\end{remark}

It is of interest to investigate the relation between the standard notion of weak equilibria and our new concept of strong equilibria. Some immediate observations can be made.  

\begin{remark}\label{rem:observations}
By definition, a strong equilibrium is also a weak one. On the other hand, if a weak equilibrium satisfies \eqref{weak} with strict inequality for all $Q\in\cQ$ and $i\in S$, then \eqref{strong} must hold for any $i\in S$and $Q\in\cQ$, showing that the weak equilibrium is in fact strong. The unclear, challenging case is when $Q^*$ a weak equilibrium and 
\eqref{weak} holds with equality for some $Q\in\cQ$ and $i\in S$. 
\end{remark}

The goal of this paper is to elucidate the difference, as well as the connection, between strong and weak equilibria. This will be done at two different levels. Theoretically, complete characterizations for both weak and strong equilibria will be derived. Based on this, we will demonstrate how a weak equilibrium can differ from a strong one in concrete examples. In particular, we will show explicitly that there can be incentive to deviate from a weak equilibrium, as described in Remark~\ref{rem:interpretation}, which justifies the new notion of strong equilibria.


\section{The Main Results}\label{sec:results}

\subsection{Characterizations of Weak and Strong Equilibria}\label{subsec:characterizations}
In this section, we will carry out detailed asymptotic analysis of $F(i,Q^*) - F(i,Q\otimes_\eps Q^*)$ as $\eps\downarrow 0$.  This will 
lead us to the distinct, yet connected, characterizations of weak and strong equilibria in Theorems~\ref{thm:weak E iff} and \ref{thm:strong E iff}. 

Recall $F(i,Q)$ in \eqref{F}. For any $i\in S$, $Q\in\cQ$, and $\eps>0$, we define
\begin{equation}\label{F_eps}
F_{\eps}(i,Q):=\E_i\left[\int_0^\infty f(t+\eps,X_t,Q_{X_t})dt\right].
\end{equation}
Then, we will write
\begin{align*}
F(Q)&:=(F(1,Q),\ \dotso,F(N,Q))\quad \hbox{and}\quad F_\eps(Q):=(F_\eps(1,Q),\ \dotso,F_\eps(N,Q)). 
\end{align*}
Also recall that $Q_i$ 
denotes the $i^{th}$ row of $Q$. 

\begin{lemma}\label{lem:asymptotics}
Assume \eqref{f conti.} and \eqref{integrability}. Fix $i\in S$ and $Q, Q^*\in \cQ$. Then, as $\eps\downarrow 0$, 
\begin{equation}\label{expand F}
F(i,Q\otimes_{\eps} Q^*) = F_{\eps}(i,Q^*) + \left[f(0,i,Q_i)+F_\eps(Q^*)\cdot Q_i\right]\eps + o(\eps). 
\end{equation}
Suppose further that \eqref{f conti.} is strengthened to the following:
\begin{equation}\label{f conti.'}
|f(t+\eps,i,\bq)-f(t,i,\bq)| \le h(t,\eps;i,\bq)\quad \forall t\ge 0,\  \eps>0,\  i\in S,\  \hbox{and}\ \bq\in D_i,
\end{equation}
where $h$ is a nonnegative function such that 
\begin{align}
h(t,\eps;i,\bq)\ \hbox{is increasing in $\eps$},\ \hbox{with}\ \lim_{\eps\downarrow 0}h(t,\eps;i,\bq) =0;\label{h1}\\  
\int_0^\infty h(t,\eps;i,\bq) dt <\infty,\ \hbox{for}\ \eps>0\ \hbox{small enough}\label{h2}.
\end{align}
Then, as $\eps\downarrow 0$, 
\begin{equation}\label{expand F'}
F(i,Q^*)-F(i,Q\otimes_{\eps} Q^*) =  \left(\Gamma^{Q^*}(Q^*_i)- \Gamma^{Q^*}(Q_i)\right)\eps + o(\eps), 
\end{equation}
where
\begin{equation}\label{Gamma}
\Gamma^{Q^*}(Q_i) := f(0,i,Q_i)+Q_i\cdot F(Q^*).
\end{equation}
\end{lemma}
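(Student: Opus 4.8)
The plan is to analyze $F(i, Q\otimes_\eps Q^*)$ by conditioning on the behavior of $X$ on $[0,\eps]$ under the generator $Q$, and then splitting the payoff integral into the contribution from $[0,\eps]$ and from $(\eps,\infty)$. First I would write
\[
F(i, Q\otimes_\eps Q^*) = \E_{i,Q}\left[\int_0^\eps f(t, X_t, Q_{X_t})\,dt\right] + \E_{i,Q}\left[\int_\eps^\infty f(t, X_t, Q^*_{X_t})\,dt\right],
\]
where on $(\eps,\infty)$ the chain evolves under $Q^*$ starting from $X_\eps$. By the Markov property the second term equals $\E_{i,Q}\left[F_\eps(X_\eps, Q^*)\right]$ after the substitution $t \mapsto t+\eps$ in the integral and recognizing \eqref{F_eps}. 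For the first term, since $X_0 = i$ and the holding time at $i$ is exponential with rate $-(\,Q_i\,)_i$, we have $X_t = i$ for all $t\in[0,\eps]$ with probability $1 - O(\eps)$, so by continuity \eqref{f conti.} and the integrability bound \eqref{integrability} the first term is $f(0,i,Q_i)\,\eps + o(\eps)$.

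The heart of the asymptotic \eqref{expand F} is then the expansion of $\E_{i,Q}[F_\eps(X_\eps, Q^*)]$. Writing $P^Q_\eps = e^{\eps Q}$ for the transition matrix under $Q$ over time $\eps$, we have $\E_{i,Q}[F_\eps(X_\eps,Q^*)] = (P^Q_\eps F_\eps(Q^*))_i = (F_\eps(Q^*))_i + \eps\,(Q F_\eps(Q^*))_i + o(\eps)$, using $P^Q_\eps = I + \eps Q + o(\eps)$. Since $(QF_\eps(Q^*))_i = Q_i \cdot F_\eps(Q^*) = F_\eps(Q^*)\cdot Q_i$, collecting terms gives exactly \eqref{expand F}. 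A point that needs care is that $F_\eps(Q^*)$ itself depends on $\eps$, so one should check it stays bounded (uniformly for small $\eps$) to absorb the $o(\eps)$ from $P^Q_\eps - I - \eps Q$; this follows from \eqref{integrability} since $|F_\eps(i,Q^*)| \le \int_0^\infty \sup |f(t+\eps,\cdot,\cdot)|\,dt = \int_\eps^\infty \sup|f|\,dt \le \int_0^\infty \sup|f|\,dt < \infty$.

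For the second, stronger conclusion \eqref{expand F'}, I would first apply \eqref{expand F} with $Q$ replaced by $Q^*$ to get $F(i, Q^*) = F(i, Q^*\otimes_\eps Q^*) = F_\eps(i,Q^*) + [f(0,i,Q^*_i) + F_\eps(Q^*)\cdot Q^*_i]\eps + o(\eps)$. Subtracting the expansion for $F(i,Q\otimes_\eps Q^*)$, the $F_\eps(i,Q^*)$ terms cancel and we are left with
\[
F(i,Q^*) - F(i,Q\otimes_\eps Q^*) = \left[f(0,i,Q^*_i) + F_\eps(Q^*)\cdot Q^*_i - f(0,i,Q_i) - F_\eps(Q^*)\cdot Q_i\right]\eps + o(\eps).
\]
It then remains to replace $F_\eps(Q^*)$ by $F(Q^*) = F_0(Q^*)$ inside the bracket at cost $o(1)$, i.e. to show $F_\eps(j, Q^*) \to F(j,Q^*)$ as $\eps\downarrow 0$ for each $j$. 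This is where the strengthened hypothesis \eqref{f conti.'}–\eqref{h2} enters: $|F_\eps(j,Q^*) - F(j,Q^*)| \le \E_j\left[\int_0^\infty |f(t+\eps, X_t, Q^*_{X_t}) - f(t,X_t,Q^*_{X_t})|\,dt\right] \le \int_0^\infty \max_{i,\bq} h(t,\eps;i,\bq)\,dt$, which tends to $0$ by monotone convergence using \eqref{h1} and the integrable majorant from \eqref{h2} (the sup over the finite set $S$ and over $\bq$ ranging in the compact-valued $D_i$ — or, if $D_i$ is not compact, restricted to $\|\bq\|$ bounded by the relevant rates of $Q^*$ — preserving measurability and integrability). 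Since the bracket multiplies $\eps$, replacing $F_\eps(Q^*)$ by $F(Q^*)$ changes the right side only by $o(\eps)$, yielding \eqref{expand F'} with $\Gamma^{Q^*}$ as in \eqref{Gamma}.

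The main obstacle I anticipate is the bookkeeping of the $o(\eps)$ terms: one must ensure that the error in $\E_{i,Q}[\int_0^\eps f\,dt] = f(0,i,Q_i)\eps + o(\eps)$ is genuinely $o(\eps)$ and not merely $O(\eps)$, which requires combining the continuity of $f(\cdot,i,\bq)$ at $0$ with a uniform-in-small-time control of the event $\{X_t \ne i \text{ for some } t\le\eps\}$ (probability $O(\eps)$, contributing $O(\eps^2)$ once multiplied by the bounded integrand over an interval of length $\eps$, hence $o(\eps)$), and likewise the error from $P^Q_\eps = I + \eps Q + o(\eps)$ must be paired with the uniform boundedness of $F_\eps(Q^*)$ noted above. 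None of these steps is deep, but they must be assembled carefully.
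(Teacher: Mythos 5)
Your proof is correct and follows essentially the same route as the paper's: split $F(i,Q\otimes_\eps Q^*)$ at time $\eps$, expand the transition probabilities to first order (you write $P^Q_\eps = I + \eps Q + o(\eps)$; the paper writes the same thing entrywise), estimate the $[0,\eps]$ contribution by conditioning on whether a jump occurs, subtract the $Q = Q^*$ case, and then pass from $F_\eps(Q^*)$ to $F(Q^*)$ via \eqref{f conti.'}--\eqref{h2} and monotone/dominated convergence. The only cosmetic difference is that you worry about the sup over a possibly non-compact $D_i$ when showing $F_\eps(j,Q^*)\to F(j,Q^*)$; this is unnecessary since $Q^*$ is fixed and $S$ is finite, so the sup is over finitely many pairs $(i,Q^*_i)$ (the paper simply takes $\sum_{i\in S} h(t,\eps,i,Q^*_i)$ as the dominant).
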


The proof of Lemma~\ref{lem:asymptotics} is relegated to Appendix~\ref{sec:asymptotics}. 

\begin{remark}\label{rem:NE1}
Under non-exponential discounting as in \eqref{NE discounting}, assumptions in Lemma~\ref{lem:asymptotics}
turn into mild conditions on the discount function $\delta$: \eqref{f conti.} amounts to the continuity of $\delta$; \eqref{integrability} reduces to 
\begin{equation}\label{delta integrable}
\int_0^\infty \delta(t) dt <\infty;
\end{equation}
\eqref{f conti.'} is equivalent to the continuity of $\delta$ and
\begin{equation}\label{delta integrable'}
\int_0^\infty \left(\delta(t)-\delta(t+\eps)\right) dt <\infty,\quad \hbox{for $\eps>0$ small enough}.
\end{equation}
Note that \eqref{delta integrable} implies \eqref{delta integrable'}, but not vice versa. Indeed, if \eqref{delta integrable} holds, the integral in \eqref{delta integrable'} reduces to $\int_0^\eps \delta(t)dt$, which is finite for all $\eps>0$. On the other hand, it can be checked that the hyperbolic discount function $\delta(t) := \frac{1}{1+\beta t}$, with $\beta>0$, satisfies \eqref{delta integrable'}, but not \eqref{delta integrable}.

Hence, ``\eqref{integrability} and \eqref{f conti.'}'' reduces to ``$\delta$ is continuous and satisfies \eqref{delta integrable}''.
This already covers many commonly-seen non-exponential discount functions, such as generalized hyperbolic $\delta(t) := \frac{1}{(1+\beta t)^k}$ with $\beta>0$ and $k>1$, and pseudo-exponential $\delta(t) := \lambda e^{-\rho_1 t} + (1-\lambda) e^{-\rho_2 t}$ with $\lambda\in (0,1)$ and $\rho_1,\rho_2>0$.   
\end{remark}

Suppose $X_0 =i$ and that all future selves beyond time $\eps>0$ will follow $Q^*\in\cQ$. In view of \eqref{expand F'}, the current self would like to follow $Q^*\in\cQ$ on $[0,\eps]$, rather than deviate to any other $Q\in\cQ$, only if the first-order term is always nonnegative,  i.e. for any $Q\in\cQ$,
\begin{equation}\label{optimal ROC}
\Gamma^{Q^*}(Q^*_i)\ge  \Gamma^{Q^*}(Q_i).
\end{equation}
This relation completely characterizes weak equilibria.  

\begin{theorem}\label{thm:weak E iff}
Assume \eqref{integrability} and \eqref{f conti.'}. Then, $Q^*\in\cQ$ is a weak equilibrium if and only if $\Gamma^{Q^*}(Q^*_i) \ge \Gamma^{Q^*}(Q_i)$
for all $(i,Q)\in S\times\cQ$. 
\end{theorem}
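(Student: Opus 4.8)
The plan is to leverage the asymptotic expansion in Lemma~\ref{lem:asymptotics}, since under the hypotheses \eqref{integrability} and \eqref{f conti.'}, equation \eqref{expand F'} gives
\[
F(i,Q^*)-F(i,Q\otimes_\eps Q^*) = \bigl(\Gamma^{Q^*}(Q^*_i)-\Gamma^{Q^*}(Q_i)\bigr)\eps + o(\eps)
\]
as $\eps\downarrow 0$, for each fixed $i\in S$ and $Q\in\cQ$.

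For the \emph{if} direction, suppose $\Gamma^{Q^*}(Q^*_i)\ge\Gamma^{Q^*}(Q_i)$ for all $(i,Q)\in S\times\cQ$. Fix any such $(i,Q)$. Dividing \eqref{expand F'} by $\eps>0$ and taking $\liminf_{\eps\downarrow 0}$, the $o(\eps)/\eps$ term vanishes and we are left with $\liminf_{\eps\downarrow 0}\bigl(F(i,Q^*)-F(i,Q\otimes_\eps Q^*)\bigr)/\eps = \Gamma^{Q^*}(Q^*_i)-\Gamma^{Q^*}(Q_i)\ge 0$, which is exactly \eqref{weak}. Hence $Q^*$ is a weak equilibrium.

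For the \emph{only if} direction, suppose $Q^*$ is a weak equilibrium, and argue by contraposition: assume there exist $i\in S$ and $Q\in\cQ$ with $\Gamma^{Q^*}(Q^*_i)-\Gamma^{Q^*}(Q_i)=:-c<0$. Then by \eqref{expand F'},
\[
\frac{F(i,Q^*)-F(i,Q\otimes_\eps Q^*)}{\eps} = -c + \frac{o(\eps)}{\eps} \longrightarrow -c <0
\]
as $\eps\downarrow 0$, so in particular $\liminf_{\eps\downarrow 0}\bigl(F(i,Q^*)-F(i,Q\otimes_\eps Q^*)\bigr)/\eps = -c<0$, contradicting \eqref{weak}. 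Therefore no such $(i,Q)$ exists, i.e. $\Gamma^{Q^*}(Q^*_i)\ge\Gamma^{Q^*}(Q_i)$ for all $(i,Q)\in S\times\cQ$.

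I do not anticipate a serious obstacle here: the theorem is essentially a direct corollary of Lemma~\ref{lem:asymptotics}, with all the analytic work (the $o(\eps)$ control, which needs \eqref{f conti.'} and the dominating function $h$) already absorbed into that lemma. The only points requiring minor care are that the expansion \eqref{expand F'} holds for each \emph{fixed} pair $(i,Q)$ — which suffices, since the quantifiers in both \eqref{weak} and the claimed inequality are pointwise over $(i,Q)$ — and that one uses the full $\lim$ (not merely $\liminf$) furnished by \eqref{expand F'} to convert the strict violation into a contradiction with the $\liminf$ condition in \eqref{weak}.
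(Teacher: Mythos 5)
Your proof is correct and follows exactly the same route as the paper's: both rest entirely on the expansion \eqref{expand F'} from Lemma~\ref{lem:asymptotics}, divide by $\eps$, and read off the sign of the first-order coefficient. You merely spell out the two directions that the paper leaves implicit in the phrase ``if and only if.''
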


\begin{proof}
For any $i\in S$ and $Q\in \cQ$, by Lemma~\ref{lem:asymptotics} (specifically \eqref{expand F'}), as $\eps\downarrow 0$,
\begin{equation}\label{o(1)}
\frac{F(i,Q^*)-F(i,Q\otimes_{\eps} Q^*)}{\eps} = \left(\Gamma^{Q^*}(Q^*_i)- \Gamma^{Q^*}(Q_i)\right)+o(1).
\end{equation}
This shows that \eqref{weak} is satisfied (i.e. $Q^*$ is a weak equilibrium) if and only if \eqref{optimal ROC} holds for all $(i,Q)\in S\times\cQ$.
\end{proof}

Theorem~\ref{thm:weak E iff} gives rise to a handy criterion for weak equilibria. Specifically, for any $i\in S$ and differentiable function $v:D_i\to\R$, let $\nabla v(\alpha)$ be the gradient of $v$ evaluated at $\alpha\in D_i$, and $\partial_n v(\alpha)$ be the $n$-th component of $\nabla v(\alpha)$. Let
\begin{equation}\label{e26}
\fT:=\bigg\{\lambda=(\lambda_1,\dotso,\lambda_N)\in\R^N:\ \sum_{i=1,...,N}\lambda_i=0\bigg\}.
\end{equation}

\begin{proposition}\label{prop:finding weak E}
Let $f(0,i,\cdot)$ be $\mathcal C_1$ for all $i\in S$. If $Q^*\in\cQ$ is a weak equilibrium, then for any $i\in S$,
\begin{equation}\label{e27}
\left(\nabla f(0,i,Q_i^*)+F(Q^*)\right)\cdot\lambda\leq 0,\quad\forall\lambda\in\fT\text{ s.t. }Q_i^*+\eps\lambda\in D_i\text{ for $\eps>0$ small enough}.
\end{equation}
In particular, if $Q_i^*$ is a relative interior point of $D_i$, then \eqref{e27} reduces to
\[
\partial_n f(0,i,Q_i^*)+F(n,Q^*)=\partial_m f(0,i,Q_i^*)+F(m,Q^*),\quad n,m=1,\dotso,N.
\]
Furthermore, if $f(0,i,\cdot)$ is additionally concave for all $i\in S$, the converse of \eqref{e27} is also true; that is, $Q^*$ is a weak equilibrium if and only if \eqref{e27} holds for all $i\in S$.
\end{proposition}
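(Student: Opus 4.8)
The plan is to reduce the statement entirely to \thmref{thm:weak E iff}, which says $Q^*$ is a weak equilibrium iff $\Gamma^{Q^*}(Q^*_i)\ge\Gamma^{Q^*}(Q_i)$ for all $i\in S$ and $Q\in\cQ$. Fix $i\in S$. Since $\Gamma^{Q^*}(q)=f(0,i,q)+q\cdot F(Q^*)$ depends on $Q$ only through its $i$-th row $q=Q_i\in D_i$, and since the rows $Q_j$, $j\ne i$, are free to be chosen in $D_j$, the condition in \thmref{thm:weak E iff} decouples row by row: $Q^*$ is a weak equilibrium iff for every $i\in S$, the function $\Gamma^{Q^*}(\cdot):D_i\to\R$ attains its maximum over $D_i$ at $Q^*_i$. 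So everything comes down to a first-order (and, under concavity, a complete) optimality characterization for maximizing the function $q\mapsto f(0,i,q)+q\cdot F(Q^*)$ over the convex set $D_i$.

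Next I would carry out the first-order necessary-condition step. Note that $D_i\subseteq E_i$ and $E_i$ is the intersection of the affine hyperplane $\{q:\sum_j q_j=0\}$ (equivalently $q_i=-\sum_{j\ne i}q_j$) with the nonnegative orthant in the off-diagonal coordinates; in particular every feasible direction at $q=Q^*_i$ lies in $\fT$ from \eqref{e26}. Given $\lambda\in\fT$ with $Q^*_i+\eps\lambda\in D_i$ for all small $\eps>0$, the map $\eps\mapsto \Gamma^{Q^*}(Q^*_i+\eps\lambda)=f(0,i,Q^*_i+\eps\lambda)+(Q^*_i+\eps\lambda)\cdot F(Q^*)$ is $\mathcal C_1$ near $0$ (since $f(0,i,\cdot)$ is $\mathcal C_1$) and is maximized at $\eps=0$; differentiating at $0^+$ gives $\bigl(\nabla f(0,i,Q^*_i)+F(Q^*)\bigr)\cdot\lambda\le 0$, which is exactly \eqref{e27}. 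For the relative-interior case, if $Q^*_i$ is a relative interior point of $D_i$, then both $\lambda=e_n-e_m$ and $\lambda=e_m-e_n$ are admissible directions for every $n,m$, so \eqref{e27} holds with both signs, forcing equality $\partial_n f(0,i,Q^*_i)+F(n,Q^*)=\partial_m f(0,i,Q^*_i)+F(m,Q^*)$; here one should check the indexing so that $\partial_n$ of the linear term $q\cdot F(Q^*)$ contributes $F(n,Q^*)$ (being slightly careful about the $q_i=-\sum_{j\ne i}q_j$ bookkeeping, which is why the statement is phrased using $\nabla$ on all $N$ coordinates and directions $\lambda\in\fT$ rather than free partials).

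For the converse under the additional hypothesis that $f(0,i,\cdot)$ is concave, I would use the standard fact that a concave $\mathcal C_1$ function $g$ on a convex set $C$ with $g'$ evaluated along all feasible directions at $x_0$ being $\le 0$ attains its max at $x_0$: for any $q\in D_i$, concavity of $q\mapsto f(0,i,q)$ (the linear term is affine, hence the whole $\Gamma^{Q^*}$ is concave on $D_i$) gives $\Gamma^{Q^*}(q)-\Gamma^{Q^*}(Q^*_i)\le \nabla\Gamma^{Q^*}(Q^*_i)\cdot(q-Q^*_i)$, and $q-Q^*_i=:\lambda$ lies in $\fT$ and is a feasible direction (indeed $Q^*_i+\eps\lambda=(1-\eps)Q^*_i+\eps q\in D_i$ for $\eps\in[0,1]$ by convexity of $D_i$), so \eqref{e27} makes the right-hand side $\le 0$; hence $\Gamma^{Q^*}(Q^*_i)\ge\Gamma^{Q^*}(q)$ for all $q\in D_i$, and invoking \thmref{thm:weak E iff} row by row gives that $Q^*$ is a weak equilibrium.

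The main obstacle is essentially bookkeeping rather than conceptual: the constraint $q_i=-\sum_{j\ne i}q_j$ means $D_i$ is not full-dimensional in $\R^N$, so one must be careful that ``$\nabla f(0,i,Q^*_i)$'' is the ambient $\R^N$-gradient, that feasible directions are taken in $\fT$, and that the directional derivative of $q\cdot F(Q^*)$ along $\lambda\in\fT$ is $\lambda\cdot F(Q^*)$ (no need to eliminate the $q_i$ coordinate). A secondary point needing a line of justification is the decoupling across rows $i$: because $\Gamma^{Q^*}(Q_i)$ only sees row $i$ and the admissible set $\cQ$ is a product $\prod_j D_j$, the global condition of \thmref{thm:weak E iff} is equivalent to the conjunction over $i$ of the single-row maximality conditions, which is what lets us treat each $i$ separately. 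Both of these are routine once stated precisely.
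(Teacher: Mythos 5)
Your proposal is correct and follows essentially the same route as the paper's proof: reduce via Theorem~\ref{thm:weak E iff} to row-by-row maximization of $\Gamma^{Q^*}$, obtain \eqref{e27} by a one-sided directional-derivative (difference-quotient) argument, and for the converse use concavity of $\xi\mapsto f(0,i,\xi)+F(Q^*)\cdot\xi$ to upgrade the first-order condition to global maximality. The extra care you take about the product structure of $\cQ$ (decoupling across $i$) and the ambient-gradient bookkeeping is sound but amounts to making explicit what the paper leaves implicit.
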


\begin{proof}
Let $Q^*$ be a weak equilibrium. Fix $i\in S$. For any $\eps>0$ and $\lambda\in\fT$ such that $Q^*_i+\eps\lambda\in D_i$, by Theorem~\ref{thm:weak E iff} and recalling \eqref{Gamma}, we get
$
f(0,i,Q^*_i)+ Q^*_i\cdot F(Q^*)\geq f(0,i,Q^*_i+\eps\lambda)+(Q^*_i+\eps\lambda)\cdot F(Q^*).
$
It follows that
\[
\frac{f(0,i,Q^*_i+\eps\lambda)-f(0,i,Q^*_i)}{\eps}+F(Q^*)\cdot\lambda\leq 0.
\]
Then \eqref{e27} follows as $\eps\downarrow 0$. Conversely, suppose $Q^*\in\cQ$ satisfies \eqref{e27} for all $i\in S$. If $f(0,i,\cdot)$ is concave for all $i\in S$, then the map
$\xi\mapsto f(0,i,\xi)+F(Q^*)\cdot\xi$
is concave for all $i\in S$. This, together with \eqref{e27}, shows that $\xi=Q^*_i$ is a global maximum of $\xi\mapsto f(0,i,\xi)+F(Q^*)\cdot\xi$ for all $i\in S$. That is, $\Gamma^{Q^*}(Q^*_i)\ge\Gamma^{Q^*}(Q_i)$ for all $(i,Q)\in S\times\cQ$. By Theorem~\ref{thm:weak E iff}, $Q^*$ is a weak equilibrium.
\end{proof}

The usefulness of Proposition~\ref{prop:finding weak E} will be apparent in Section~\ref{sec:examples}, where we look for equilibria in concrete examples.


To characterize strong equilibria, we need to upgrade \eqref{expand F'} to an expansion of second order or higher. To this end, whenever $f(\cdot,i,\bq)\in\mathcal{C}_1$, 
we define, for any $(i,Q)\in S\times\cQ$, the function
\[
G(i,Q) := \E_i\left[\int_0^\infty f_t(t,X_t,Q_{X_t})dt \right].
\]
In addition, we will write
\begin{align*}
G(Q)&:= \left(G(1,Q), G(2,Q),..., G(N,Q)\right),
\qquad 
\Gamma^{Q^*}(Q) := \left(\Gamma^{Q^*}(Q_i), \Gamma^{Q^*}(Q_2), ..., \Gamma^{Q^*}(Q_N)\right).
\end{align*} 


\begin{lemma}\label{lem:asymptotics'}
Let $f$ satisfy \eqref{integrability} and $f(\cdot,i,\bq)$ be $\mathcal{C}_1$ on $[0,\infty)$ for all $i\in S$ and $\bq\in D_i$.
Assume additionally that $f_t$ also satisfies \eqref{integrability}, and that 
\begin{equation}\label{r}
|f(t+\eps,i,\bq)-(f(t,i,\bq)+\eps f_t(t,i,\bq))|\le r(t,\eps;i,\bq)\quad t\ge 0,\ \eps>0,\ i\in S,\ \hbox{and}\ \bq\in D_i,
\end{equation}
where $r$ is a function continuous in $\eps$, and satisfies \eqref{h2} with 
\begin{align}
\frac{r(t,\eps;i,\bq)}{\eps}\ \hbox{increasing in $\eps$},\quad \forall (t,i,\bq). 
\label{r1}
\end{align}
Then, for any $i\in S$ and $Q$, $Q^*\in\cQ$, as $\eps\downarrow 0$,
\begin{align}
F(i,Q^*) - F(i,Q\otimes_\eps Q^*) &=  \left(\Gamma^{Q^*}(Q^*_i)- \Gamma^{Q^*}(Q_i)\right)\eps\notag\\
&\hspace{0.2in} +\frac12 \left(\Lambda^{Q^*}(i,Q^*)-\Lambda^{Q^*}(i,Q) \right)\eps^2 + o(\eps^2),\label{2rd expan.}
\end{align} 
where $\Gamma^{Q^*}(Q_i)$ is defined as in \eqref{Gamma} and
\begin{equation}\label{Lambda}
\Lambda^{Q^*}(i,Q):= 
f_t(0,i,Q_i)+ Q_i \cdot \left(2 G(Q^*) + \Gamma^{Q^*}(Q)\right). 
\end{equation}
\end{lemma}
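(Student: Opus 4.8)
The plan is to refine the first-order expansion \eqref{expand F} from Lemma~\ref{lem:asymptotics} by extracting the second-order term. Recall from \eqref{expand F} that, as $\eps\downarrow 0$,
\[
F(i,Q\otimes_\eps Q^*) = F_\eps(i,Q^*) + \left[f(0,i,Q_i)+F_\eps(Q^*)\cdot Q_i\right]\eps + o(\eps).
\]
To get to order $\eps^2$ I would need two sharpenings: first, a Taylor expansion of $\eps\mapsto F_\eps(i,Q^*)$ to second order; second, an expansion of the bracketed ``rate'' term to first order in $\eps$, since it is multiplied by $\eps$. For the first, I would differentiate $F_\eps(i,Q^*) = \E_i[\int_0^\infty f(t+\eps,X_t,Q^*_{X_t})\,dt]$ in $\eps$: the hypothesis that $f(\cdot,i,\bq)\in\mathcal C_1$, that $f_t$ satisfies \eqref{integrability}, and the bound \eqref{r} (with \eqref{r1}, \eqref{h2}) licenses differentiation under the integral and expectation, yielding $\frac{d}{d\eps}F_\eps(i,Q^*)\big|_{\eps=0} = G(i,Q^*)$, so that $F_\eps(i,Q^*) = F(i,Q^*) + G(i,Q^*)\eps + o(\eps)$; re-running the same argument one order higher (or, more carefully, using \eqref{r} directly as a quantitative remainder bound for the increment $f(t+\eps,\cdot)-f(t,\cdot)-\eps f_t(t,\cdot)$, exactly the role of $r$) gives $F_\eps(i,Q^*) = F(i,Q^*) + G(i,Q^*)\eps + o(\eps)$ with an error that is $o(\eps)$ and, after being divided appropriately, will feed into the $\eps^2$ coefficient once combined with the $\eps$-prefactor term. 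Likewise $F_\eps(Q^*)\cdot Q_i = F(Q^*)\cdot Q_i + G(Q^*)\cdot Q_i\,\eps + o(\eps)$.

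Substituting these into \eqref{expand F} and collecting powers of $\eps$: the order-$1$ coefficient is $f(0,i,Q_i) + F(Q^*)\cdot Q_i = \Gamma^{Q^*}(Q_i)$, matching \eqref{Gamma}; the order-$\eps^2$ coefficient is $G(i,Q^*) + G(Q^*)\cdot Q_i + \tfrac12 f_t(0,i,Q_i) + \text{(second-order term of }F_\eps(i,Q^*))$. The only delicate point is to check that the $\tfrac12$-factors and the cross terms assemble into precisely $\tfrac12\Lambda^{Q^*}(i,Q)$ with $\Lambda^{Q^*}(i,Q) = f_t(0,i,Q_i)+Q_i\cdot(2G(Q^*)+\Gamma^{Q^*}(Q))$; here one must remember that $F(i,Q\otimes_\eps Q^*)$ also acquires a genuinely second-order contribution from the interplay between running $Q$ on $[0,\eps]$ and the state distribution at time $\eps$ — this is where the $Q_i\cdot\Gamma^{Q^*}(Q)$ piece (as opposed to $Q_i\cdot F(Q^*)$) enters, reflecting that on $[0,\eps]$ the payoff rate at a visited state $j$ is itself $\Gamma^{Q^*}(Q_j)$ to leading order. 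I would make this rigorous by conditioning on the first jump of $X$ under generator $Q$ before time $\eps$ (probability $O(\eps)$ of one jump, $O(\eps^2)$ of two or more), so that $F(i,Q\otimes_\eps Q^*)$ splits into a ``no jump'' term and a ``one jump to $j$'' term, each expanded to $o(\eps^2)$; this is the standard second-order analysis of a Markov-chain generator perturbation.

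Finally, writing the analogous expansion with $Q$ replaced by $Q^*$ gives $F(i,Q^*) = F(i,Q^*\otimes_\eps Q^*)$ trivially, but more usefully, subtracting the two expansions of $F(i,Q^*\otimes_\eps Q^*)$ and $F(i,Q\otimes_\eps Q^*)$ cancels the common term $F_\eps(i,Q^*)$ and hence all the terms that do not involve the ``first-slice'' row, producing \eqref{2rd expan.} with coefficients $\Gamma^{Q^*}(Q^*_i)-\Gamma^{Q^*}(Q_i)$ at order $\eps$ and $\tfrac12(\Lambda^{Q^*}(i,Q^*)-\Lambda^{Q^*}(i,Q))$ at order $\eps^2$. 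The main obstacle I anticipate is controlling the remainder uniformly enough to conclude the error is genuinely $o(\eps^2)$ rather than merely $O(\eps^2)$: this is exactly what the continuity of $r$ in $\eps$, the monotonicity \eqref{r1} of $r(t,\eps;i,\bq)/\eps$, and the integrability \eqref{h2} are designed to handle, via a dominated-convergence argument showing $\eps^{-2}\int_0^\infty r(t,\eps;i,\bq)\,dt\to 0$; I would also need to ensure the $O(\eps^2)$-probability ``two or more jumps'' event contributes only $o(\eps^2)$, which follows from \eqref{integrability} applied on the bounded control $\|Q_i\|$ together with the fact that over a vanishing time window the integrated payoff is itself $o(1)$.
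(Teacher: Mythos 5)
Your plan matches the paper's proof in all essentials: condition on the number of jumps of $X$ under $Q$ on $[0,\eps]$, expand $\P(X_\eps=j\mid X_0=i)=(e^{Q\eps})_{ij}$ to order $\eps^2$, use \eqref{r}/\eqref{r1}/\eqref{h2} to get $F_\eps(j,Q^*)=F(j,Q^*)+\eps G(j,Q^*)+o(\eps)$ where the first-order term is multiplied by $\eps$, observe that the standalone $F_\eps(i,Q^*)$ cancels upon subtracting the $Q=Q^*$ expansion, and collapse $Q_i\cdot\vec f(0,Q)+(Q^2)_i\cdot F(Q^*)$ into $Q_i\cdot\Gamma^{Q^*}(Q)$. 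One bookkeeping slip: you list $G(i,Q^*)$ (and a ``second-order term of $F_\eps(i,Q^*)$'') as contributors to the $\eps^2$ coefficient, but $G(i,Q^*)$ is the coefficient of $\eps^1$ in $F_\eps(i,Q^*)$, not $\eps^2$; fortunately this and the unnecessary second-order expansion of $F_\eps(i,Q^*)$ both disappear in the subtraction you correctly anticipate, so the slip does not affect the final answer.
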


The proof of Lemma~\ref{lem:asymptotics'} is relegated to Appendix~\ref{sec:asymptotics'}. 

\begin{remark}
By Taylor's theorem, $f(\cdot,i,\bq)\in\mathcal{C}_1$ readily implies 
\begin{equation}\label{Taylor's}
r(t,\eps;i,\bq) = o(\eps),\quad \hbox{for each $(t,i,\bq)$}.
\end{equation}
Hence, there obviously exist a sequence $\{\eps_k\}_{k\in\N}$ with $\eps_k\downarrow 0$, depending on $(t,i,\bq)$, such that $r(t,\eps_k;i,\bq)/\eps_k$ decreases to 0. In view of this, \eqref{r1} is slightly stronger than ``$f(\cdot,i,\bq)\in\mathcal{C}_1$ for all $(i,\bq)$'': it requires $r(t,\eps_k;i,\bq)/\eps_k$ to decrease for any arbitrary $\{\eps_k\}_{k\in\N}$ with $\eps_k\downarrow 0$. 
\end{remark}

\begin{remark}\label{rem:NE2}
Under non-exponential discounting as in \eqref{NE discounting}, all conditions imposed in Lemma~\ref{lem:asymptotics'} boil down to mild conditions on the discount function $\delta$: 
\begin{itemize}
\item By Remark~\ref{rem:NE1}, ``$f$ satisfies \eqref{integrability}'' reduces to \eqref{delta integrable}. 
\item ``$f(\cdot,i,\bq)\in \mathcal{C}_1$ with $f_t$ satisfying \eqref{integrability}'' amounts to 
\begin{equation}\label{delta' integrable}
\delta\in \mathcal{C}_1\quad \hbox{and}\quad \int_0^\infty \delta'(t)dt <\infty;
\end{equation}
\item ``$r(t,\eps;i,\bq)$ satisfies \eqref{h2}'' reduces to $\int_0^\infty |\delta(t+\eps) -(\delta(t)+\eps \delta'(t))|dt <\infty$. Note that this is always true under \eqref{delta integrable} and \eqref{delta' integrable}; recall from Remark~\ref{rem:NE1} that \eqref{delta integrable} implies \eqref{delta integrable'}. 
\item ``$\frac{r(t,\eps;i,\bq)}{\eps}$ increasing in $\eps$, for all $(t,i,\bq)$'' boils down to 
\begin{equation}\label{delta' decrease}
\left|\frac{\delta(t+\eps)-\delta(t)}{\eps} - \delta'(t)\right|\quad \hbox{increasing in $\eps$},\quad \forall t\ge0. 
\end{equation}
A useful sufficient condition for \eqref{delta' decrease} is $\delta$ being convex.
\end{itemize}
Hence, conditions imposed in Lemma~\ref{lem:asymptotics'} reduce to \eqref{delta integrable}, \eqref{delta' integrable}, and \eqref{delta' decrease}. This already covers many commonly-seen non-exponential discount functions, including generalized hyperbolic and pseudo-exponential as mentioned in Remark~\ref{rem:NE1}. 
\end{remark}

The second-order expansion in Lemma~\ref{lem:asymptotics'} provides a straightforward sufficient condition for strong equilibria. It will be useful to show the existence of strong equilibria (Theorem~\ref{thm:existence} below), as well as to find strong equilibria explicitly in examples of Section~\ref{sec:examples}. Interestingly, the condition itself relates solely to the first-order term, yet the derivation of it involves the second-order term.


\begin{proposition}\label{coro:sufficient for strong E}
Suppose $f$ satisfies the conditions specified in Lemma~\ref{lem:asymptotics'}. For any $Q^*\in\cQ$, if 
\begin{equation}\label{strict condition}
\hbox{$\Gamma^{Q^*}(Q^*_i) > \Gamma^{Q^*}(Q_i)$ for all $i\in S$ and $Q\in \cQ$ with $Q_i\neq Q^*_i$},
\end{equation}
then $Q^*$ is a strong equilibrium.  
\end{proposition}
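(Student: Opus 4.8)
The plan is to fix $i\in S$ and $Q\in\cQ$ and produce $\eps_0>0$ such that $F(i,Q^*)\ge F(i,Q\otimes_{\eps'}Q^*)$ for all $0<\eps'\le\eps_0$, which is precisely \eqref{strong}. Write $\phi(\eps):=F(i,Q^*)-F(i,Q\otimes_\eps Q^*)$, and set $B:=\{k\in S:\ Q_k\neq Q^*_k\}$ and $\sigma:=\inf\{t\ge 0:\ X_t\in B\}$. For $t<\sigma$ the trajectory sits in $S\setminus B$, where the rows $Q_k$ and $Q^*_k$ coincide, so the law of the stopped trajectory $(X_{t\wedge\sigma})_{t\ge 0}$ is the same under $Q\otimes_\eps Q^*$ and under $Q^*$, and the running payoff accumulated before $\sigma$ is insensitive to the choice between $Q$ and $Q^*$. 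Applying the strong Markov property at $\sigma$, and using that after $\sigma$ the two controls still agree on the event $\{\sigma\ge\eps\}$, I expect the representation
\[
\phi(\eps)=\E_{i,Q^*}\!\left[\mathbf 1_{\{\sigma<\eps\}}\big(F_\sigma(X_\sigma,Q^*)-F_\sigma(X_\sigma,Q\otimes_{\eps-\sigma}Q^*)\big)\right],
\]
where $F_s(k,\cdot)$ is the time-shifted payoff started from $k$ (as in \eqref{F_eps}) and $Q\otimes_{\eps-\sigma}Q^*$ is the control restarted at $X_\sigma$. Since $X_\sigma\in B$ on $\{\sigma<\eps\}$, the inner difference compares $Q^*_k$ against $Q_k\neq Q^*_k$, which is exactly the regime governed by \eqref{strict condition}. (If $i\in B$ then $\sigma=0$ and this reduces to $\phi(\eps)=F(i,Q^*)-F(i,Q\otimes_\eps Q^*)$ itself; if $B$ is unreachable from $i$ then $\phi\equiv 0$ and there is nothing to prove.)

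The second step is a uniform first-order expansion of the inner difference. For $s\ge 0$ put $f^{(s)}(t,k,\bq):=f(t+s,k,\bq)$; one checks that $f^{(s)}$ satisfies the hypotheses of Lemma~\ref{lem:asymptotics} (which follow from those of Lemma~\ref{lem:asymptotics'}), with the dominating function $h$ in \eqref{f conti.'} replaced by $h(\cdot+s,\cdot)$, and that the quantities $F$ and $\Gamma^{Q^*}$ built from $f^{(s)}$ are $F_s$ and $\Gamma^{Q^*}_s(\bq):=f(s,k,\bq)+\bq\cdot F_s(Q^*)$. Hence \eqref{expand F'} gives, for each fixed $s$ and $k\in B$,
\[
F_s(k,Q^*)-F_s(k,Q\otimes_\delta Q^*)=\big(\Gamma^{Q^*}_s(Q^*_k)-\Gamma^{Q^*}_s(Q_k)\big)\delta+o_s(\delta),\qquad\text{as }\delta\downarrow 0 .
\]
The coefficient $\Gamma^{Q^*}_s(Q^*_k)-\Gamma^{Q^*}_s(Q_k)$ is continuous in $s$ (by continuity of $f(\cdot,k,\bq)$ and, via \eqref{integrability}, of $s\mapsto F_s(Q^*)$) and, at $s=0$, equals $\Gamma^{Q^*}(Q^*_k)-\Gamma^{Q^*}(Q_k)>0$ by \eqref{strict condition} since $k\in B$; as $B$ is finite, there are $\eps_0,c_0>0$ with this coefficient $\ge c_0$ for all $k\in B$ and all $s\in[0,\eps_0]$. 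The key technical point is that $o_s(\delta)$ is uniform for $s$ in a compact neighbourhood of $0$: re-reading the proof of Lemma~\ref{lem:asymptotics}, the remainder depends on $s$ only through a modulus-of-continuity bound for $f(\cdot,k,\bq)$ over the compact interval $[0,\eps_0+\delta]$ and through tail estimates of the type $\int_0^\infty h(t+s,\delta;k,\bq)\,dt\le\int_0^\infty h(t,\delta;k,\bq)\,dt$, both controlled uniformly in $s\in[0,\eps_0]$, while the transition-probability expansions carry no $s$-dependence. Shrinking $\eps_0$, one thus obtains
\[
F_s(k,Q^*)-F_s(k,Q\otimes_\delta Q^*)\ge\tfrac{c_0}{2}\,\delta\qquad\text{for all }k\in B,\ s\in[0,\eps_0],\ \delta\in(0,\eps_0].
\]

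Finally, for $0<\eps\le\eps_0$ we have $\sigma\in[0,\eps_0]$ and $\eps-\sigma\in(0,\eps_0]$ on the event $\{\sigma<\eps\}$, so the representation in the first paragraph and the last inequality yield $\phi(\eps)\ge\frac{c_0}{2}\,\E_{i,Q^*}[\mathbf 1_{\{\sigma<\eps\}}(\eps-\sigma)]\ge 0$. Therefore $F(i,Q^*)\ge F(i,Q\otimes_{\eps'}Q^*)$ for all $0<\eps'\le\eps_0$, and since $(i,Q)$ was arbitrary, $Q^*$ is a strong equilibrium. I expect the two genuinely delicate points to be (a) making the strong-Markov decomposition rigorous for the concatenated, time-inhomogeneous generator, and (b) the uniformity in the shift $s$ of the first-order remainder; everything else is a direct combination of Lemma~\ref{lem:asymptotics} with the strict inequality \eqref{strict condition}. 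It is worth noting that, although the statement is placed after the second-order Lemma~\ref{lem:asymptotics'}, this argument uses only the first-order expansion: the role of \eqref{strict condition} is precisely to force the first-order term to dominate along every trajectory once one conditions on the first visit to $B$ (the second-order term alone can vanish when $Q$ differs from $Q^*$ only at states that $i$ cannot reach in one jump).
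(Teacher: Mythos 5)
Your proof is correct in outline and takes a genuinely different route from the paper's. The paper's argument (Appendix A.3) splits into three cases: when $Q_i\neq Q^*_i$ it reads off first-order dominance from \eqref{expand F'}; when $Q_i=Q^*_i$ but some $\ell$ with $Q_\ell\neq Q^*_\ell$ has $q^*_{i\ell}>0$ it invokes the \emph{second}-order expansion from Lemma~\ref{lem:asymptotics'} and shows the $\eps^2$-coefficient is positive; and when $S_0:=\{j:Q_j\neq Q^*_j\}$ cannot be reached in a single jump it conditions on the first hitting time of $S_0$ and computes an explicit leading term of order $\eps^{\hat n+1}$. You absorb all three cases into one: condition on the first hitting time $\sigma$ of $B=S_0$ from the very start, use the strong Markov property to localize the discrepancy to $\{\sigma<\eps\}$, and then need only a \emph{first}-order estimate for the conditional difference — made uniform in the time shift $s=\sigma\in[0,\eps_0]$ — plus the strict inequality \eqref{strict condition} at the hitting state $X_\sigma\in B$. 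This trades the paper's explicit higher-order asymptotics for a uniform-in-$s$ version of Lemma~\ref{lem:asymptotics}, which you correctly identify as the technical heart; your sketch of why the remainder is uniform (modulus of continuity of $f(\cdot,k,\bq)$ on a compact interval, the monotone tail bound $\int_0^\infty h(t+s,\delta;\cdot)\,dt\le\int_0^\infty h(t,\delta;\cdot)\,dt$, and $s$-independence of the transition-probability expansions) is plausible and matches what a careful re-reading of Appendix A.1 would reveal, but a full write-up would need to track each $o(\delta)$ term explicitly. Notably, your approach uses only the hypotheses of Lemma~\ref{lem:asymptotics} rather than the stronger ones of Lemma~\ref{lem:asymptotics'}, so it potentially proves the proposition under weaker assumptions than stated; the paper's Case I genuinely needs Lemma~\ref{lem:asymptotics'}. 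In spirit, you have taken the paper's Case II device and promoted it to the organizing principle of the whole proof, which is cleaner and arguably more illuminating about \emph{why} the strict $\Gamma$-inequality suffices: the first visit to a state where $Q$ and $Q^*$ disagree is precisely where \eqref{strict condition} bites, pathwise.
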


The proof of Proposition~\ref{lem:asymptotics'} is relegated to Appendix~\ref{sec:sufficient for strong E}. 

In general, a weak equilibrium $Q^*\in\cQ$ may satisfy \eqref{optimal ROC} with equality for some $i\in S$ and $Q\in \cQ$ with $Q_i\neq Q_i^*$. In this case, Proposition~\ref{coro:sufficient for strong E} is inconclusive. To further examine if $Q^*\in\cQ$ is a strong equilibrium, one needs to analyze the second-order term in \eqref{2rd expan.} carefully. 


\begin{proposition}\label{prop:sufficient for strong E}
Suppose $f$ satisfies the conditions specified in Lemma~\ref{lem:asymptotics'}. Let $Q^*\in Q$ be a weak equilibrium.  Consider 
\begin{equation}\label{R1}
R := \{(i,Q)\in S\times\cQ\setminus\{Q^*\}\ :\  \Gamma^{Q^*}(Q^*_i) = \Gamma^{Q^*}(Q_i)\}.
\end{equation}
If $\Lambda^{Q^*}(i,Q^*)>\Lambda^{Q^*}(i,Q)$ for all $(i,Q)\in R$, then $Q^*$ is a strong equilibrium. If $\Lambda^{Q^*}(i,Q^*)<\Lambda^{Q^*}(i,Q)$ for some $(i,Q)\in R$, then $Q^*$ is not a strong equilibrium.
\end{proposition}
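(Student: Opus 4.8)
The plan is to leverage the second-order expansion in Lemma~\ref{lem:asymptotics'} and reduce the strong-equilibrium question to a term-by-term comparison, exactly as in the proof of Proposition~\ref{coro:sufficient for strong E}. Fix $i\in S$ and $Q\in\cQ$. If $Q_i=Q_i^*$, then $F(i,Q\otimes_{\eps'}Q^*)=F(i,Q^*\otimes_{\eps'}Q^*)=F(i,Q^*)$ for all $\eps'>0$ (the concatenation changes nothing on the $i$-th row, and the only relevant row for the evolution started at $i$ on $[0,\eps']$ is the $i$-th one), so \eqref{strong} holds trivially with any $\eps$. If $Q_i\neq Q_i^*$ but $(i,Q)\notin R$, then since $Q^*$ is a weak equilibrium we have $\Gamma^{Q^*}(Q_i^*)\ge\Gamma^{Q^*}(Q_i)$ by Theorem~\ref{thm:weak E iff}, and the assumption $(i,Q)\notin R$ forces strict inequality; then \eqref{2rd expan.} gives $F(i,Q^*)-F(i,Q\otimes_{\eps'}Q^*)=(\Gamma^{Q^*}(Q_i^*)-\Gamma^{Q^*}(Q_i))\eps'+o(\eps')>0$ for $\eps'>0$ small enough, i.e. there is $\eps=\eps(i,Q)>0$ with \eqref{strong}.

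The remaining case is $(i,Q)\in R$, which is where the second-order term is decisive. Here $\Gamma^{Q^*}(Q_i^*)=\Gamma^{Q^*}(Q_i)$, so the first-order term in \eqref{2rd expan.} vanishes and
\[
F(i,Q^*)-F(i,Q\otimes_{\eps'}Q^*)=\tfrac12\left(\Lambda^{Q^*}(i,Q^*)-\Lambda^{Q^*}(i,Q)\right)(\eps')^2+o((\eps')^2)\quad\text{as }\eps'\downarrow 0.
\]
If $\Lambda^{Q^*}(i,Q^*)>\Lambda^{Q^*}(i,Q)$ for all $(i,Q)\in R$, then for each such pair the right-hand side is strictly positive for $\eps'>0$ small, giving $\eps=\eps(i,Q)>0$ with \eqref{strong}; combined with the first two cases this establishes \eqref{strong} for every $(i,Q)\in S\times\cQ$, so $Q^*$ is a strong equilibrium. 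Conversely, if $\Lambda^{Q^*}(i,Q^*)<\Lambda^{Q^*}(i,Q)$ for some $(i,Q)\in R$, then for that pair the leading term is strictly negative, so $F(i,Q^*)-F(i,Q\otimes_{\eps'}Q^*)<0$ for all sufficiently small $\eps'>0$; thus there is no $\eps>0$ satisfying \eqref{strong}, and $Q^*$ fails to be a strong equilibrium.

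The only genuine subtlety — and the step I would write most carefully — is the case $Q_i=Q_i^*$: I must argue that starting from state $i$, the law of $X$ on $[0,\eps']$ under $Q\otimes_{\eps'}Q^*$ depends only on the $i$-th row $Q_i$ (since $X$ has not yet left $i$ with positive... more precisely, one conditions on the first jump), so that replacing $Q$ by $Q^*$ on the other rows is immaterial, and after time $\eps'$ both strategies coincide with $Q^*$; hence $F(i,Q\otimes_{\eps'}Q^*)=F(i,Q^*)$. Alternatively, and perhaps more cleanly, one can invoke \eqref{2rd expan.} directly: when $Q_i=Q_i^*$ every occurrence of $Q_i$ in \eqref{Gamma} and \eqref{Lambda} equals $Q_i^*$, so both the first- and second-order coefficients vanish, and one needs the $o((\eps')^2)$ remainder to be genuinely zero — which is why the first, pathwise argument is preferable. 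Everything else is a routine reading-off of signs from the asymptotic expansion, with the choice $\eps=\eps(i,Q)$ (allowed to depend on the pair) matching precisely the quantifier structure of Definition~\ref{def:strong E}.
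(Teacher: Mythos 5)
Your Case 1 is wrong: the claim that $F(i,Q\otimes_{\eps'}Q^*) = F(i,Q^*)$ whenever $Q_i = Q_i^*$ does not hold. Starting from state $i$, the chain jumps to some state $j\neq i$ at a time $\tau < \eps'$ with probability of order $\eps'$; on $(\tau,\eps']$ the dynamics and the running payoff are then governed by $Q_j$ under $Q\otimes_{\eps'}Q^*$ but by $Q^*_j$ under $Q^*$, and these need not agree when $Q\neq Q^*$. The resulting discrepancy is of order $(\eps')^2$, not zero; this is precisely what the second-order term in \eqref{2rd expan.} captures. Indeed, $\Lambda^{Q^*}(i,Q)$ in \eqref{Lambda} depends on the whole matrix $Q$ through $\Gamma^{Q^*}(Q)$, not merely on the row $Q_i$, so your alternative remark that ``both the first- and second-order coefficients vanish'' when $Q_i=Q_i^*$ is equally false, and the ``pathwise'' argument you say you prefer does not rescue it either, since it ignores the dynamics after the first jump.

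The repair is simple and recovers the paper's proof: there should be no separate ``$Q_i = Q_i^*$'' case. If $Q_i=Q_i^*$ and $Q\neq Q^*$, then $\Gamma^{Q^*}(Q^*_i)=\Gamma^{Q^*}(Q_i)$ holds trivially, so $(i,Q)\in R$ and your Case 3 (second-order dominance via the hypothesis on $\Lambda$) applies verbatim. The only pair you may set aside trivially is $(i,Q^*)$ itself, for which $F(i,Q^*\otimes_{\eps'}Q^*)=F(i,Q^*)$ exactly. With that change, the dichotomy becomes ``$(i,Q)\in R^c$: first-order strict dominance (or the trivial $Q=Q^*$)'' versus ``$(i,Q)\in R$: second-order dominance,'' and both directions of the proposition, including your converse, follow as you wrote them.
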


\begin{proof}
Given $(i,Q)\in R^c$, Theorem~\ref{thm:weak E iff} implies $\Gamma^{Q^*}(Q^*_i) > \Gamma^{Q^*}(Q_i)$. Then, \eqref{expand F'} readily shows that $F(i,Q^*)-F(i,Q\otimes_{\eps} Q^*)> 0$ as $\eps>0$ small enough. Given $(i,Q)\in R$,  Lemma~\ref{lem:asymptotics'} implies  
\begin{equation}\label{o(1)'}
\frac{F(i,Q^*)-F(i,Q\otimes_{\eps} Q^*)}{\eps^2} = \frac12 \left(\Lambda^{Q^*}(i,Q^*)-\Lambda^{Q^*}(i,Q)\right)+o(1).
\end{equation}
If $\Lambda^{Q^*}(i,Q^*)>\Lambda^{Q^*}(i,Q)$ for all $(i,Q)\in R$, \eqref{o(1)'} shows that $F(i,Q^*)-F(i,Q\otimes_{\eps} Q^*)> 0$ as $\eps>0$ small enough, for all $(i,Q)\in R$. Thus, $Q^*$ is a strong equilibrium. If $\Lambda^{Q^*}(i,Q^*)<\Lambda^{Q^*}(i,Q)$ for some $(i,Q)\in R$,  \eqref{o(1)'} implies $F(i,Q^*)-F(i,Q\otimes_{\eps} Q^*)<0$ for $\eps>0$ small enough, showing that $Q^*$ is not a strong equilibrium.
\end{proof}


In view of Proposition~\ref{prop:sufficient for strong E}, there is the inconclusive case where $\Lambda^{Q^*}(i,Q^*)\ge \Lambda^{Q^*}(i,Q)$ for all $(i,Q)\in R$ and $\Lambda^{Q^*}(i,Q^*)=\Lambda^{Q^*}(i,Q)$ for some $(i,Q)\in R$. To resolve this, one needs to upgrade Proposition~\ref{prop:sufficient for strong E} further, with a higher-order expansion. Repeating this line of reasoning leads to the following characterization of strong equilibria. 

\begin{theorem}\label{thm:strong E iff}
Suppose there exist functions $L_n: S\times \cQ\times\cQ\to\R$, $n\in\N$, such that as $\eps\downarrow 0$,
\begin{equation}\label{ultimate}
F(i,Q^*) - F(i,Q\otimes_\eps Q^*) = \sum_{n=1}^\infty L_n(i,Q,Q^*) \eps^n,\quad \forall i\in S\ \hbox{and}\ Q,Q^*\in\cQ.
\end{equation}
Then, $Q^*\in\cQ$ is a strong equilibrium if and only if for any $(i,Q)\in S\times\cQ$, one of the following holds:
\begin{itemize}
\item [(i)]  $\exists\ n^*= n^*(i,Q)\in\N$ such that $L_n(i,Q,Q^*) =0$ for all $n<n^*$ and $L_{n^*}(i,Q,Q^*)>0$;
\item [(ii)] $L_n(i,Q,Q^*) =0$ for all $n\in \N$.
\end{itemize}
\end{theorem}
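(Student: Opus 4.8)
The plan is to read off the characterization directly from the power-series expansion \eqref{ultimate}, treating the sign of the first nonzero coefficient as the decisive quantity, exactly as one does for ordinary real functions near $0$. Fix $(i,Q)\in S\times\cQ$ and set $\varphi(\eps) := F(i,Q^*) - F(i,Q\otimes_\eps Q^*) = \sum_{n\ge 1} L_n(i,Q,Q^*)\eps^n$. The key observation is that the defining inequality \eqref{strong} for a strong equilibrium, specialized to this pair $(i,Q)$, is exactly the statement ``$\varphi(\eps')\ge 0$ for all $0<\eps'\le\eps$ for some $\eps>0$'', i.e. $\varphi\ge 0$ on a right-neighborhood of $0$. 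So the whole theorem reduces to the elementary claim: a convergent power series $\varphi(\eps)=\sum_{n\ge1}L_n\eps^n$ is nonnegative on some interval $(0,\eps_0)$ if and only if either all $L_n=0$, or the first index $n^*$ with $L_{n^*}\ne 0$ has $L_{n^*}>0$.

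The argument for this elementary claim has two directions. For the ``if'' direction: if all $L_n=0$ then $\varphi\equiv 0$ near $0$ and \eqref{strong} holds trivially. If $n^*$ is the first nonzero index with $L_{n^*}>0$, factor $\varphi(\eps)=\eps^{n^*}\bigl(L_{n^*} + \sum_{n>n^*}L_n\eps^{n-n^*}\bigr)$; the bracketed term is a convergent power series with value $L_{n^*}>0$ at $\eps=0$, hence by continuity it stays positive on $(0,\eps_0)$ for some $\eps_0>0$, so $\varphi>0$ there and a fortiori \eqref{strong} holds. For the ``only if'' direction, suppose $Q^*$ is a strong equilibrium but neither (i) nor (ii) holds for this $(i,Q)$: then there is a first nonzero index $n^*$ with $L_{n^*}<0$ (the only remaining possibility). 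The same factorization gives $\varphi(\eps)=\eps^{n^*}\bigl(L_{n^*}+o(1)\bigr)<0$ for all sufficiently small $\eps>0$, contradicting \eqref{strong}. Since $(i,Q)$ was arbitrary, $Q^*$ is a strong equilibrium precisely when (i) or (ii) holds for every $(i,Q)$, which is the assertion.

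One point that deserves a sentence of care, rather than a genuine obstacle, is the quantifier structure: \eqref{strong} asks, for each $(i,Q)$ separately, for the \emph{existence} of a threshold $\eps>0$ below which $\varphi\ge 0$; this matches the fact that the radius of convergence and the threshold $\eps_0$ in the factorization argument may both depend on $(i,Q)$, which is harmless since Definition~\ref{def:strong E} already allows $\eps=\eps(i,Q)$. A second point worth noting is that one must use that \eqref{ultimate} is a genuine convergent series (so that the tail $\sum_{n>n^*}L_n\eps^{n-n^*}$ is itself a well-defined continuous function near $0$, not merely an asymptotic remainder); the hypothesis as stated supplies exactly this. The only mildly delicate part of the write-up is therefore just invoking continuity of the tail series at $0$ correctly and handling the all-zero case separately so that ``the first nonzero index'' is well-defined.
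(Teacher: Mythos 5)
Your proof is correct and, despite the more explicit power-series framing, is essentially the same argument as the paper's: both isolate the first nonzero coefficient $L_{n^*}$, divide out $\eps^{n^*}$ (or equivalently factor it), and conclude from $\varphi(\eps)/\eps^{n^*} = L_{n^*} + o(1)$ that the sign of $L_{n^*}$ determines the sign of $\varphi$ near $0$, with the all-zero case handled separately. Your remark that \eqref{ultimate} must be read as a genuine (convergent) equality rather than a mere asymptotic expansion is a fair clarification — the paper's treatment of case (ii) also tacitly relies on this — but it does not constitute a different route.
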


\begin{proof}
Let $Q^*\in\cQ$ be a strong equilibrium.  By contradiction, suppose there exists $(i,Q)\in S\times\cQ$ such that neither (i) nor (ii) holds. Then, there must exist $\hat n\in \N$ such that $L_n(i,Q,Q^*) =0$ for all $n<\hat n$ and $L_{\hat n}(i,Q,Q^*)<0$. Consequently, \eqref{ultimate} yields
\begin{equation}\label{eps^n}
\frac{F(i,Q^*) - F(i,Q\otimes_\eps Q^*)}{\eps^{\hat n}} = L_{\hat n}(i,Q,Q^*) +o(1).
\end{equation}
With $L_{\hat n}(i,Q,Q^*)<0$ , $F(i,Q^*) < F(i,Q\otimes_\eps Q^*)$ for $\eps>0$ small enough. This contradicts $Q^*$ being a strong equilibrium. 
On the other hand, suppose either (i) or (ii) holds for any $(i,Q)\in S\times\cQ$. If (i) holds, \eqref{ultimate} yields \eqref{eps^n}, with $\hat n$ replaced by $n^*$. With $L_{n^*}(i,Q,Q^*)>0$, $F(i,Q^*) > F(i,Q\otimes_\eps Q^*)$ for $\eps>0$ small enough. If (ii) holds, \eqref{ultimate} implies $F(i,Q^*) = F(i,Q\otimes_\eps Q^*)$ for $\eps>0$ small enough. This shows that $Q^*$ is a strong equilibrium. 
\end{proof}

Proposition~\ref{prop:finding weak E} and Theorem~\ref{thm:strong E iff} together provide a machinery for finding strong equilibria. First, one uses Proposition~\ref{prop:finding weak E} to find weak equilibria. Theorem~\ref{thm:strong E iff} comes into play next, when one wants to determine if a weak equilibrium $Q^*\in\cQ$ is in fact strong. In principle, one can derive higher-order expansions for $F(i,Q^*) - F(i,Q\otimes_\eps Q^*)$ to check whether (i) or (ii) in Theorem~\ref{thm:strong E iff} holds. Such derivations, in practice, can be quite technical and complicated, 
as shown in the proof of Lemma~\ref{lem:asymptotics'} (even for the second-order expansion). 

Since the main focus of this paper is to introduce and motivate the new notion of strong equilibria, we will not pursue expanding $F(i,Q^*) - F(i,Q\otimes_\eps Q^*)$ any further. 
As we will see in Section~\ref{sec:examples}, the second-order expansion in Lemma~\ref{lem:asymptotics'} already allows explicit demonstrations of how strong and weak equilibria can differ, and why the strong notion is needed.


\subsection{General Existence of Equilibria under Compactness}

While one can use Proposition~\ref{prop:finding weak E} and Theorem~\ref{thm:strong E iff} to search for weak and strong equilibria, as discussed below Theorem~\ref{thm:strong E iff}, this machinery 
does not assert {\it a priori} whether an equilibrium exists. It is therefore of interest to establish a general existence result for equilibria. This can be done by additional compactness assumption on admissible sets.

\begin{theorem}\label{thm:existence}
Suppose that $D_i$ in \eqref{D_i} is a convex compact set for all $i\in S$, and $f$ satisfies \eqref{integrability} with $f(0,i,\cdot)$ being concave for all $i\in S$. Then, there exists a weak equilibrium. If we assume additionally that $f(0,i,\cdot)$ is strictly concave for all $i\in S$ and $f$ satisfies the conditions in Lemma~\ref{lem:asymptotics'}, then there exists a strong equilibrium.
\end{theorem}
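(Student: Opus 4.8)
The plan is to apply the Kakutani--Fan fixed-point theorem to a best-response correspondence on the product of the admissible sets $D:=\prod_{i\in S}D_i\subseteq\R^{N\times N}$, which is convex and compact by assumption. Given a candidate $Q^*\in D$ (identified with the generator it defines), recall from \eqref{Gamma} that $\Gamma^{Q^*}(Q_i)=f(0,i,Q_i)+Q_i\cdot F(Q^*)$, and that by Theorem~\ref{thm:weak E iff} a weak equilibrium is exactly a $Q^*$ with $Q^*_i\in\argmax_{q\in D_i}\big(f(0,i,q)+q\cdot F(Q^*)\big)$ for every $i$. So I would define the set-valued map
\[
\Phi(Q^*):=\prod_{i\in S}\argmax_{q\in D_i}\big(f(0,i,q)+q\cdot F(Q^*)\big)\subseteq D,
\]
and show that any fixed point $Q^*\in\Phi(Q^*)$ is a weak equilibrium. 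The three things to verify for Kakutani--Fan are: (a) for each $Q^*$, $\Phi(Q^*)$ is nonempty and convex; (b) $\Phi$ has closed graph; (c) $D$ is nonempty convex compact (given). Point (a) follows because $q\mapsto f(0,i,q)+q\cdot F(Q^*)$ is continuous (by \eqref{f conti.}) and concave (sum of the concave $f(0,i,\cdot)$ and a linear term) on the convex compact $D_i$, so the argmax set is nonempty, convex, and compact.

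The crux is step (b), the closed-graph property, and it hinges on continuity of $Q^*\mapsto F(Q^*)$ on $D$. I would establish this as a lemma: since $D$ is compact, all admissible generators have uniformly bounded entries, say $\|Q_i\|\le c$, and \eqref{integrability} gives an integrable dominating function $t\mapsto\sup_{i,\|\bq\|\le c}|f(t,i,\bq)|$; combined with continuity of the transition semigroup $e^{tQ^*}$ in $Q^*$ (entrywise, locally uniformly in $t$ on compacts, e.g.\ via the series expansion or Duhamel's formula) and dominated convergence, one gets that $Q^*\mapsto F(i,Q^*)=\E_{i,Q^*}[\int_0^\infty f(t,X_t,Q^*_{X_t})dt]$ is continuous. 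Writing $F(i,Q^*)=\int_0^\infty\sum_{j\in S}(e^{tQ^*})_{ij}f(t,j,Q^*_j)\,dt$ makes the dominated-convergence argument transparent. Granting this, if $Q^{*,k}\to Q^*$, $\widetilde Q^k\in\Phi(Q^{*,k})$, and $\widetilde Q^k\to\widetilde Q$, then for each $i$ and each $q\in D_i$, $f(0,i,\widetilde Q^k_i)+\widetilde Q^k_i\cdot F(Q^{*,k})\ge f(0,i,q)+q\cdot F(Q^{*,k})$; passing to the limit using continuity of $f(0,i,\cdot)$ and of $F$ yields $\widetilde Q\in\Phi(Q^*)$. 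Kakutani--Fan then gives a fixed point $Q^*$, which by Theorem~\ref{thm:weak E iff} is a weak equilibrium.

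For the strong-equilibrium claim, I would run the same fixed-point argument but with $f(0,i,\cdot)$ strictly concave, so that $q\mapsto f(0,i,q)+q\cdot F(Q^*)$ is strictly concave on $D_i$ and hence has a \emph{unique} maximizer; thus $\Phi$ becomes a (single-valued) continuous function $D\to D$ and Brouwer's theorem already suffices to produce a fixed point $Q^*$. The point is that strict concavity forces $\Gamma^{Q^*}(Q^*_i)>\Gamma^{Q^*}(Q_i)$ for every $i$ and every $Q$ with $Q_i\ne Q^*_i$, which is precisely condition \eqref{strict condition} of Proposition~\ref{coro:sufficient for strong E}; since $f$ is assumed to satisfy the conditions of Lemma~\ref{lem:asymptotics'}, that proposition applies and $Q^*$ is a strong equilibrium. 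The main obstacle throughout is the infinite-horizon continuity of $F$ in $Q^*$: one must handle the tail of the time integral uniformly, which is exactly what the uniform domination from compactness of $D$ plus \eqref{integrability} is there to provide; everything else is a routine application of standard fixed-point machinery.
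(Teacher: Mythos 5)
Your proposal matches the paper's overall strategy — define the best-response correspondence $\Phi(Q^*)=\prod_i\argmax_{q\in D_i}\bigl(f(0,i,q)+q\cdot F(Q^*)\bigr)$, verify nonemptiness/convexity/closedness, apply Kakutani--Fan, and identify a fixed point with a weak equilibrium via Theorem~\ref{thm:weak E iff}; for the strong case, both you and the paper observe that strict concavity yields the strict inequality \eqref{strict condition} so that Proposition~\ref{coro:sufficient for strong E} applies. The one genuinely different ingredient is the continuity of $Q\mapsto F(Q)$: you write $F(i,Q)=\int_0^\infty\sum_{j\in S}(e^{tQ})_{ij}f(t,j,Q_j)\,dt$ and argue analytically, using entrywise continuity of $Q\mapsto e^{tQ}$ (locally uniform in $t$) together with the uniform integrable dominating function supplied by compactness of $D$ and \eqref{integrability}, then invoke dominated convergence. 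The paper instead passes through weak convergence of path laws on $D([0,\infty);S)$, the Skorokhod representation theorem, and the observation that on a finite state space the Skorokhod-coupled paths coincide for large $n$. Your route is more elementary and self-contained for a finite-state chain, avoiding the probabilistic machinery entirely, and it makes the role of \eqref{integrability} as a dominating bound completely transparent; the paper's approach is softer and would generalize more readily to infinite state spaces where the semigroup is not a matrix exponential. Your additional remark that under strict concavity $\Phi$ is single-valued so Brouwer already suffices is correct, a mild simplification the paper does not bother with. One small caveat applies equally to both proofs: passing to the limit in $f(t,j,Q^n_j)\to f(t,j,Q_j)$ uses continuity of $\bq\mapsto f(t,j,\bq)$ for $t>0$, which is not literally among the stated hypotheses of Theorem~\ref{thm:existence} (only $f(0,i,\cdot)$ concave and \eqref{f conti.}/\eqref{integrability} are invoked); the paper silently relies on it just as you do, so this is not a defect of your argument relative to the paper's.
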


The proof of Theorem~\ref{thm:existence} is relegated to Appendix~\ref{sec:existence}.

\begin{remark}
Without compactness of $D_i$, the existence of an equilibrium, weak or strong, does not hold in general, even when $f(0,i,\cdot)$ is concave.
For instance, consider $S=\{1,2\}$, $f(t,1,\cdot)\equiv 0$, $f(t,2,\cdot)\equiv e^{-t}$, and $D_i = E_i$ for $i=1,2$, i.e. no constraint at all for the generator $Q = (q_{ij})_{i,j=1,2}$. 
For any fixed $Q^*\in \cQ$, by the definition of $f$, we have $F(1,Q^*)<F(2,Q^*)$ and \eqref{optimal ROC} reads
\[
q_{12} \left(F(2,Q^*)-F(1,Q^*)\right) \le q^*_{12} \left(F(2,Q^*)-F(1,Q^*)\right)\quad \hbox{for}\ i=1. 
\]
This ineqaulity is violated as long as $q_{12}> q^*_{12}$. That is, \eqref{optimal ROC} does not hold for all $i\in S$ and $Q\in\cQ$, which precludes the existence of any weak (and thus strong) equilibrium, thanks to Theorem~\ref{thm:weak E iff}. 
\end{remark}

When each $D_i$ is convex and closed, but need not be bounded, we consider, for each $C>0$, the bounded set $D_i^C := \{\bq\in D_i: ||\bq||\le C\}$ and the corresponding set of generators
\[
\cQ_C:=\{Q\in\cQ: Q_i\in D_i^C,\ \forall i\in S\}. 
\]
Applying Theorem~\ref{thm:existence} to $\cQ_C$ gives the following result.

\begin{corollary}
Suppose $f$ satisfies \eqref{integrability} and $f(0,i,\cdot)$ is concave (resp. strictly concave) for all $i\in S$. For any $C>0$, there exists $Q^*_C\in \cQ_C$ such that \eqref{optimal ROC} holds for all $(i,Q)\in S\times\cQ_C$. Furthermore, if there is $C>0$ such that $\|(Q^{*}_C)_i\|<C$ for all $i\in S$,  then $Q_C^*$ is a weak (resp. strong) equilibrium.
\end{corollary}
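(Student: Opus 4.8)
The plan is to apply Theorem~\ref{thm:existence} to the truncated control space $\cQ_C$, treating it as the ambient control space, and then to use a "no boundary effect" argument to upgrade a relative equilibrium into a genuine one. First I would check that $\cQ_C$ satisfies the hypotheses of Theorem~\ref{thm:existence}: each $D_i^C = \{\bq\in D_i:\|\bq\|\le C\}$ is the intersection of the convex closed set $D_i$ with a closed Euclidean ball, hence convex and compact; and $f$ satisfies \eqref{integrability} (resp. the conditions in Lemma~\ref{lem:asymptotics'}) with $f(0,i,\cdot)$ concave (resp. strictly concave) on $D_i$, which remains true when restricted to $D_i^C$. Thus Theorem~\ref{thm:existence} applied with $D_i$ replaced by $D_i^C$ produces $Q^*_C\in\cQ_C$ which is a weak (resp. strong) equilibrium \emph{relative to $\cQ_C$}. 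By Theorem~\ref{thm:weak E iff} (whose hypotheses hold on $\cQ_C$ in the weak case; for the strong case one uses Proposition~\ref{coro:sufficient for strong E} in the strict-concavity setting, exactly as in the proof of Theorem~\ref{thm:existence}), this relative equilibrium satisfies $\Gamma^{Q^*_C}((Q^*_C)_i)\ge\Gamma^{Q^*_C}(Q_i)$ for all $(i,Q)\in S\times\cQ_C$, i.e. \eqref{optimal ROC} holds for all $(i,Q)\in S\times\cQ_C$. This is the first assertion.

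For the second assertion, suppose there is some $C>0$ with $\|(Q^*_C)_i\|<C$ for all $i\in S$. I want to show \eqref{optimal ROC} in fact holds for \emph{all} $(i,Q)\in S\times\cQ$, not merely for $Q\in\cQ_C$; Theorem~\ref{thm:weak E iff} (resp. the strong-equilibrium argument) then finishes. Fix $i\in S$ and an arbitrary $\bq\in D_i$. The key point is that, by \eqref{Gamma}, the map $\xi\mapsto\Gamma^{Q^*_C}(\xi) = f(0,i,\xi)+\xi\cdot F(Q^*_C)$ is concave on $D_i$ (being the sum of the concave function $f(0,i,\cdot)$ and a linear function), and we already know from the first part that it attains its maximum over the \emph{truncated} set $D_i^C$ at the interior point $(Q^*_C)_i$, interior in the sense that $\|(Q^*_C)_i\|<C$. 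For $\theta\in(0,1)$ small, the point $\xi_\theta := (1-\theta)(Q^*_C)_i + \theta\bq$ lies in $D_i$ (convexity) and satisfies $\|\xi_\theta\|\le(1-\theta)\|(Q^*_C)_i\|+\theta\|\bq\| < C$ for $\theta$ small enough, so $\xi_\theta\in D_i^C$; hence $\Gamma^{Q^*_C}((Q^*_C)_i)\ge\Gamma^{Q^*_C}(\xi_\theta)$. By concavity, $\Gamma^{Q^*_C}(\xi_\theta)\ge(1-\theta)\Gamma^{Q^*_C}((Q^*_C)_i)+\theta\,\Gamma^{Q^*_C}(\bq)$, and combining the two displays and rearranging gives $\Gamma^{Q^*_C}((Q^*_C)_i)\ge\Gamma^{Q^*_C}(\bq)$. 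Since $\bq\in D_i$ was arbitrary, \eqref{optimal ROC} holds for all $(i,Q)\in S\times\cQ$; in the strictly concave case the inequality is strict whenever $\bq\ne(Q^*_C)_i$, so Proposition~\ref{coro:sufficient for strong E} applies.

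I expect the only real subtlety to be the passage from "$\Gamma$ is maximized over $D_i^C$ at $(Q^*_C)_i$" to "$\Gamma$ is maximized over $D_i$ at $(Q^*_C)_i$"; this is precisely where the hypothesis $\|(Q^*_C)_i\|<C$ (i.e. $(Q^*_C)_i$ lies in the relative interior of $D_i^C$ with respect to the truncation constraint) is used, together with concavity of $\Gamma^{Q^*_C}$, via the standard convexity/line-segment argument above. One should also remark that Theorem~\ref{thm:existence} as stated uses $E_i$ of the form \eqref{D_i}; here we merely feed it the family $D_i^C$, which is allowed since Theorem~\ref{thm:existence} only requires $D_i$ to be a convex compact subset of $E_i$, a condition $D_i^C$ satisfies. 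The rest is bookkeeping between the weak and strong cases, handled exactly as in the proof of Theorem~\ref{thm:existence}.
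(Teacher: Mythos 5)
Your proof is correct and takes essentially the same approach the paper intends: the paper gives no explicit proof beyond the one-line remark that the corollary follows from applying Theorem~\ref{thm:existence} to $\cQ_C$, and your two-step argument (fixed-point existence on the compact truncation $\cQ_C$, then a concavity/line-segment extension of \eqref{optimal ROC} from $\cQ_C$ to $\cQ$ once $\|(Q^*_C)_i\|<C$) is exactly the natural way to fill in the details.
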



\section{A Two-State Model}\label{sec:examples}
In this section, we focus on a tractable two-state model under non-exponential discounting. Our goal is to demonstrate explicitly how theoretic results in Section~\ref{sec:results} can be used to find weak and strong equilibria, and how these two types of equilibria can differ from each other. 

Take $S=\{1,2\}$ and $D_i=E_i$ for $i=1,2$. Any generator $Q\in\cQ$ is then of the form 
\begin{equation}\label{Qab}
Q=
\left[\begin{matrix}
-a&a\\
b&-b
\end{matrix}\right],
\quad a,b\geq 0.
\end{equation}
We will denote this by $Q\sim(a,b)$. Consider the pseudo-exponential discount function
\begin{equation}\label{pedf}
\delta(t)=\lambda e^{-\rho t}+(1-\lambda) e^{-\rho' t}\quad t\ge 0,
\end{equation}
where $\lambda\in(0,1)$ and $\rho$, $\rho'\ge 0$ are given constants. Assume that 
\[
f(t,1,(-a,a))=\delta(t)g_1(a)\quad \hbox{and}\quad f(t,2,(b,-b))=\delta(t)g_2(b),
\]
for some given measurable functions $g_1$ and $g_2$. Given $Q\sim(a,b)$, we will write $F(i,Q)$ and $G(i,Q)$ as $F_i(a,b)$ and $G_i(a,b)$, respectively, for $i=1,2$. Observe that the transition probability of $X$, under $Q\sim(a,b)$, is given by
$$\left[
\begin{matrix}
P_{11}(t)&P_{12}(t)\\
P_{21}(t)&P_{22}(t)
\end{matrix}
\right]=
\left[
\begin{matrix}
\alpha+\beta e^{-\gamma t}&\beta-\beta e^{-\gamma t}\\
\alpha-\alpha e^{-\gamma t}&\beta+\alpha e^{-\gamma t}
\end{matrix}
\right],$$
where
$$\alpha:=\frac{b}{a+b},\quad\beta:=\frac{a}{a+b},\quad\gamma:=a+b.$$
Hence, we can calculate that for $i=1,2$,
\begin{align*}
F_i(a,b)&=\lambda F_i^\rho(a,b)+(1-\lambda) F_i^{\rho'}(a,b),\\
G_i(a,b)&=-\rho \lambda F_i^\rho(a,b)-\rho'(1-\lambda) F_i^{\rho'}(a,b),
\end{align*}
where 
\begin{align*}
F_1^\phi(a,b)& :=\left(\frac{\alpha}{\phi}+\frac{\beta}{\phi+\gamma}\right)g_1(a)+\left(\frac{\beta}{\phi}-\frac{\beta}{\phi+\gamma}\right)g_2(b),\quad \hbox{for}\ \phi=\rho,\rho'\\
F_2^\phi(a,b)& :=\left(\frac{\alpha}{\phi}-\frac{\alpha}{\phi+\gamma}\right)g_1(a)+\left(\frac{\beta}{\phi}+\frac{\alpha}{\phi+\gamma}\right)g_2(b),\quad \hbox{for}\ \phi=\rho,\rho'.
\end{align*}
Therefore,
\begin{align}
F_1(a,b)-F_2(a,b)&=\left(\frac{\lambda}{\rho+a+b}+\frac{1-\lambda}{\rho'+a+b}\right)(g_1(a)-g_2(b)),\label{e30}\\
G_1(a,b)-G_2(a,b)&=-\left(\frac{\rho\lambda}{\rho+a+b}+\frac{\rho'(1-\lambda)}{\rho'+a+b}\right)(g_1(a)-g_2(b)).\label{e30'}
\end{align}
It follows that for any $Q\sim (a,b)$ and $Q^*\sim(a^*,b^*)$,  $\Gamma^{Q^*}(Q_i)$ defined in \eqref{Gamma} takes the form
\begin{align}
\Gamma^{(a^*,b^*)}_1(a):= \Gamma^{Q^*}(Q_1) &= g_1(a)-a \left(F_1(a^*,b^*)-F_2(a^*,b^*)\right),\label{G1}\\
\Gamma^{(a^*,b^*)}_2(b):= \Gamma^{Q^*}(Q_2) &= g_2(b)+b \left(F_1(a^*,b^*)-F_2(a^*,b^*)\right).\label{G2}
\end{align}
Moreover, $\Lambda^{Q^*}(i,Q)$ defined in \eqref{Lambda} takes the form
\begin{align}
&\Lambda^{(a^*,b^*)}_1(a,b):= \Lambda^{Q^*}(1,Q) = -2a \left(G_1(a^*,b^*)-G_2(a^*,b^*)\right)-a\left(g_1(a)-g_2(b)\right)\notag\\
&\hspace{1.8in} - (\rho\lambda+\rho' (1-\lambda)) g_1(a) + (a^2+ab) \left(F_1(a^*,b^*)-F_2(a^*,b^*)\right), \label{L1} \\
&\Lambda^{(a^*,b^*)}_2(a,b):= \Lambda^{Q^*}(2,Q) = 2b \left(G_1(a^*,b^*)-G_2(a^*,b^*)\right)+b\left(g_1(a)-g_2(b)\right)\notag\\
&\hspace{1.8in} - (\rho\lambda+\rho' (1-\lambda)) g_2(b) - (b^2+ab) \left(F_1(a^*,b^*)-F_2(a^*,b^*)\right). \label{L2} 
\end{align}

The next example shows how \propref{prop:finding weak E} can be a convenient tool to find weak equilibria. Other results in Section~\ref{sec:results} can then be applied to check if a weak equilibrium is actually strong.

\begin{example}\label{eg:1}
Let $\lambda=\frac{1}{2},\ \rho=1,\ \rho'=2$, $g_1(a)=-a^2,\ g_2(b)=2-(1-b)^2$. By \propref{prop:finding weak E}, $Q\sim (a,b)$ is a weak equilibrium if and only if the following holds: (i) if $a,b>0$, we have
\begin{align}
\label{e28} g_1'(a)+F_2(a,b)-F_1(a,b)&=0,\\
\label{e29} g_2'(b)+F_1(a,b)-F_2(a,b)&=0,
\end{align}
and (ii) if $a=0$ (resp. $b=0$), then ``$\leq$'' holds in \eqref{e28} (resp. \eqref{e29}). Thanks to \eqref{e30}, the above equations admits a unique solution $(a^*,b^*)=(\frac{5}{12},\frac{7}{12})$. That is, $Q^*\sim (\frac{5}{12},\frac{7}{12})$ is the unique weak equilibrium. By Theorem~\ref{thm:weak E iff}, $a^*$ and $b^*$ are maximizers of $\Gamma^{(a^*,b^*)}_1(a)$ and $\Gamma^{(a^*,b^*)}_2(b)$, respectively; recall \eqref{G1} and \eqref{G2}. By the strict concavity of $g_1$ and $g_2$, $a^*$ and $b^*$ are in fact the unique maximizers. This shows that $Q^*\sim(\frac{5}{12},\frac{7}{12})$ is actually a strong equilibrium, thanks to Proposition~\ref{coro:sufficient for strong E}. 
\end{example}

An equilibrium can reside on the boundary of an admissible set, as the next example shows. 

\begin{example}\label{eg:2}
Let $\lambda=\frac{1}{2},\ \rho=1,\ \rho'=2$, $g_1(a)=-a^2,\ g_2(b)=2-b^2$. Using \propref{prop:finding weak E} as in Example~\ref{eg:1}, we obtain a unique weak equilibrium $Q^*\sim (a^*,0)$, where $a^*>0$ is the unique solution to
\[
-2a+\frac{1}{2}\left(\frac{1}{a+1}+\frac{1}{a+2}\right)(a^2+2)=0.
\]
By the strict concavity of $g_1$ and $g_2$, the same argument in Example~\ref{eg:1} shows that $Q^*\sim (a^*,0)$ is in fact a strong equilibrium. 
\end{example}

In the above two examples, weak equilibria are also strong, thanks to the strict concavity of $g_1$ and $g_2$. In general, a weak equilibrium may not be strong, and determining whether it is strong can be much more involved than applying Proposition~\ref{coro:sufficient for strong E}. This is demonstrated in the next example, where two equilibria co-exist: one is a weak equilibrium that is not strong; the other is strong.

\begin{example}\label{eg:3}
Let $\lambda=\frac{1}{2},\ \rho=1,\ \rho'=2$, $g_1(a)=-a^2$, and 
\begin{equation*}
g_2(b) =
\begin{cases}
 \frac{193}{144}+\frac56 b,\quad &\hbox{for}\ b< \frac{7}{12};\\
 2-(1-b)^2,\quad &\hbox{for}\ b\ge \frac{7}{12}. 
\end{cases} 
\end{equation*}
Note that $g_2$ is concave and $\mathcal{C}^1$ on $[0,\infty)$, but strictly concave only on $(\frac{7}{12},\infty)$. 

First, we claim that $(a^*,b^*)=(\frac{5}{12},\frac{7}{12})$ obtained in Example~\ref{eg:1} is still a weak equilibrium under current setting. Indeed, 
by \eqref{G1}, \eqref{G2} and \eqref{e30}, 
\begin{align*}
\Gamma^{(a^*,b^*)}_1(a) &=  g_1(a)- a\left(\frac{\lambda}{\rho+a^*+b^*}+\frac{1-\lambda}{\rho'+a^*+b^*}\right)(g_1(a^*)-g_2(b^*))= -a^2 + \frac56 a,\\
\Gamma^{(a^*,b^*)}_2(b) &=  g_2(b)+ b\left(\frac{\lambda}{\rho+a^*+b^*}+\frac{1-\lambda}{\rho'+a^*+b^*}\right)(g_1(a^*)-g_2(b^*))= g_2(b) - \frac56 b\\
& =
\begin{cases}
\frac{193}{144},\quad &\hbox{if}\ b< \frac{7}{12};\\
-\left(b^2-\frac{7}{12}\right) +\frac{193}{144},\quad &\hbox{if}\ b\ge \frac{7}{12}.
\end{cases},
\end{align*}
This shows that $\Gamma^{(a^*,b^*)}_1(a)$ is maximized uniquely at $a=a^*$, while 
\begin{equation}\label{G=G}
\argmax_{b\ge 0} \Gamma^{(a^*,b^*)}_2(b) = [0,{7}/{12}]. 
\end{equation}
By Theorem~\ref{thm:weak E iff}, this already implies that $Q^*\sim(a^*, b^*)$ is a weak equilibrium. 

As opposed to Examples~\ref{eg:1} and \ref{eg:2}, whether $Q^*$ is a strong equilibrium cannot be concluded by Proposition~\ref{coro:sufficient for strong E}, as $b^*=\frac{7}{12}$ is not a unique maximizer in \eqref{G=G}. We will instead resort to Proposition~\ref{prop:sufficient for strong E}. With the aid of \eqref{e30} and \eqref{e30'}, we deduce from \eqref{L2} that
\begin{align*}
&\Lambda^{(a^*,b^*)}_2(a^*,b)\\ 
&=b\left[-2\left(\frac{\rho\lambda}{\rho+a^*+b^*}+\frac{\rho'(1-\lambda)}{\rho'+a^*+b^*}\right) - (a^*+b)\left(\frac{\lambda}{\rho+a^*+b^*}+\frac{1-\lambda}{\rho'+a^*+b^*}\right)\right](g_1(a^*)-g_2(b^*))\\
&\hspace{0.2in} +b(g_1(a^*)-g_2(b))-(\rho\lambda+\rho'(1-\lambda))g_2(b) \\
&= b\left[\frac{7}{3}+\frac{5}{6}\left(b+\frac{5}{12}\right)\right] - \frac{25}{144} b - \left(b+\frac{3}{2}\right) g_2(b)=-\frac{1}{12}b-\frac{579}{288},\quad \hbox{for}\ b\le \frac{7}{12}.
\end{align*}
This shows that 
\begin{equation}\label{L<L}
\Lambda^{(a^*,b^*)}_2(a^*,b^*) < \Lambda^{(a^*,b^*)}_2(a^*,b),\quad \forall b\in [0,7/12).
\end{equation}
For any $Q\sim(a^*,b)$ with $b\in [0,7/12)$, \eqref{G=G} and \eqref{L<L} imply that $(2,Q)\in R$ (recall \eqref{R1}) and $\Lambda^{Q^*}(2,Q^*) < \Lambda^{Q^*}(2,Q)$. By Proposition~\ref{prop:sufficient for strong E}, $Q^*\sim (a^*,b^*)$ is not a strong equilibrium. 

Now, when using Proposition~\ref{prop:finding weak E} to find weak equilibria, if we take $b=0$, \eqref{e28} and \eqref{e29} become
\begin{align}
\frac56 &\le 2a = \frac12\left(\frac{1}{1+a}+\frac{1}{2+a}\right) \left(a^2+\frac{193}{144}\right). \label{e28'}
\end{align}
This admits a unique solution $\bar a\in [0,\infty)$ (numerical computation shows $\bar a \approx 0.42364$). By \eqref{e28'},
\begin{align*}
\Gamma^{(\bar a,0)}_1(a) &=  -a^2+2\bar a a = -a (a-2\bar a),\quad \hbox{and}\quad \Gamma^{(\bar a,0)}_2(b) =   \frac{193}{144} + \left(\frac56-2\bar a\right) b. 
\end{align*}
This shows that $a=\bar a$ (resp. $b=0$) is the unique maximizer of $\Gamma^{(\bar a,0)}_1(a)$ (resp. $\Gamma^{(\bar a,0)}_2(b)$). Hence, $\bar Q^*\sim(\bar a, 0)$ is a strong equilibrium, thanks to Proposition~\ref{coro:sufficient for strong E}. 
\end{example}

\begin{remark}\label{rem:deviate weak E}
In the above example, for any $Q\sim(a^*,b)$ with $b\in [0,7/12)$, we deduce from \eqref{G=G}, \eqref{L<L}, and the second-order expansion in Lemma~\ref{lem:asymptotics'} that 
\[
F(2,Q^*) < F(2,Q\otimes_\eps Q^*),\quad \hbox{for all $\eps>0$ small enough}. 
\]
This shows that, although $Q^*\sim (a^*,b^*)$ is a weak equilibrium, there is incentive to deviate from $Q^*$ at state 2: deviating to $Q$, in a however small interval $[0,\eps]$, yields a larger payoff than sticking to $Q^*$. This reminds us of Remark~\ref{rem:interpretation}, and indicates the need for the notion of strong equilibria. 
\end{remark}


\subsection{Application to Machinery Management}
A machine is any mechanical or electrical device that converts input energy to useful output energy or work. In the good state, where the machine functions properly, there is a tradeoff between achieving maximal efficiency and reducing tear and wear. While one intends to exert input energy intensely enough to maximize the payoff generated by the machine, more intense use of the machine will bring about the bad state, where the machine is out of order, more easily. In the bad state, one spends effort repairing the machine. There is again a tradeoff: the more intensely the effort is spent, the faster the good state can be restored; yet, at the same time, the faster costs accumulate.  All these considerations of machinery management can be well encoded in our two-state model. 

Take Example~\ref{eg:1} for instance. Let $i=1$ be the bad state and $i=2$ be the good state. For any $Q\sim (a,b)$, $a\ge 0$ represents how intensely effort is spent on repairing the machine (in state 1), and $b\ge 0$ stands for how intensely the machine is used (in state 2). In view of \eqref{Qab}, the larger $a\ge 0$ (i.e. the more intensely effort is spent), the faster (on average) the machine will function again (i.e. the state will switch from $1$ to $2$). The payoff function $g_1(a) = -a^2$, however, shows that the cost of repair grows quickly with the intensity of effort. Similarly, in view of \eqref{Qab}, the smaller $b\ge 0$ (i.e. the less intensely the machine is used), the less likely the machine will break down (i.e. the state will switch from $2$ to $1$).  Leaving the machine idle (i.e. $b=0$), however, may not be the best choice in view of the payoff function $g_2(b)=2-(1-b)^2$. As the intensity of input energy increases (i.e. $b\ge 0$ increases), the instantaneous payoff $g_2(b)$ first increases to its maximal level $g_2(1)=2$ at $b=1$, and then decreases indefinitely. Thus, one may want to choose $b\ge 0$ closer to 1 to possibly enlarge the cumulative payoff. 

In a factory or a company, how a machine should be managed, i.e. how $Q\sim (a,b)$ should be specified, is often decided by a group of professional workers, instead of one single individual. In such group decision making, pseudo-exponential discounting is typically used. Its most basic form is \eqref{pedf}, which is the discount function for a group that involves two individuals (or cohorts) who discount exponentially at different rates $\rho$ and $\rho'$, respectively, with $\lambda\in(0,1)$ determined by the sizes or influence of the cohorts. In other words, the derivation in Example~\ref{eg:1} amounts to finding a management plan for a machine---how hard it should be used and repaired---that will be consistently carried out over time. It turns out that only $Q^*\sim (\frac{5}{12},\frac{7}{12})$ is such a time-consistent management plan.


\section{The Discrete-Time Case}\label{sec:discrete}
In this section, we study the discrete-time model corresponding to that in Section~\ref{sec:setup}. The purpose is twofold. First, when time horizon is infinite, little is known about the existence and characterization of equilibria, even in discrete time. Arguments from Section~\ref{sec:results} can be applied here to shed new light on this. Our second focus is the convergence of discrete-time equilibria to their continuous-time counterparts, as the mesh size in time diminishes. As we will see, when discrete-time equilibria converge, they always converge to a weak equilibrium, which, however, need not be strong. 

Let $X=(X_t)_{t=0,1,...}$ be a time-homogeneous discrete-time Markov chain taking values in $S:=\{1,2,\dotso,N\}$ for some $N\in\N$, and take $\bar \N:=\{0,1,2,...\}$. The transition matrix $u=(u_{ij})_{i,j=1,\dotso,N}$ of $X$ is to be controlled. Let $\mathfrak{P}$ be the set of probability measures defined on $S$, i.e.
\begin{equation}\label{tp}
\mathfrak{P}:=\bigg\{\alpha=(\alpha_1,\dotso,\alpha_N)\in\R_+^N:\  \sum_{i=1,...,N}\alpha_i=1\bigg\}.
\end{equation}
Consider a continuous function $\kappa: \bar\N\times S\times\mathfrak{P}\to\R$. For any $(t,i,\alpha)\in \bar\N\times S\times\fP$, $\kappa(t,i,\alpha)$ represents the payoff at time $t$, given that $X_t=i$ and $u_i=\alpha$, where $u_i$ denotes the $i^{th}$ row of $u$. Assume that
\begin{equation}\label{e2}
\sum_{t=0}^\infty\bigg(\sup_{(i,\alpha)\in S\times\fP}\left|\kappa(t,i,\alpha)\right|\bigg)<\infty.
\end{equation}
Let $\A_i\subseteq \mathfrak P$ be the set of admissible transitional probabilities when $X$ is at the state $i$. 
Define 
\[
\A := \{u\in R^{N\times N} : u_i\in \A_i,\ \forall i\in S\}. 
\]
For any $i\in S$ and $u\in\A$, \eqref{e2} guarantees that the expected payoff 
\begin{equation}\label{e0}
V(i,u):=\E_{i,u}\bigg[\sum_{t=0}^\infty \kappa(t,X_t,u_{X_t})\bigg]<\infty
\end{equation}
is well-defined, where $\E_{i,u}$ denotes the expectation conditioned on $X_0 =i$ and the transition matrix of $X$ being $u$. We will write $\E_i$ for $\E_{i,u}$ whenever there is no confusion about $u$. 

For any $u,u^*\in\A$, we introduce the concatenation of $u$ and $u^*$ at time $1$, denoted by $u\otimes_1 u^*$. Using this concatenated matrix means that the evolution of $X$ is governed by $u$ at time $0$, and then by $u^*$ at all subsequent time points. Given an initial state $i\in S$, the expected value is then
\begin{equation}\label{e1}
V(i,u\otimes_1 u^*) =\kappa(0,i,u_i)+\sum_{j=1}^N\left(\E_{j,u^*}\left[\sum_{t=0}^\infty \kappa(t+1,X_t,u^*_{X_t})\right]\cdot u_{ij}\right).
\end{equation}

\begin{definition}\label{def:E}
$u^*\in\A$ is called an equilibrium if,  for any $(i,u)\in S\times\A$, $V(i,u^*)\ge V(i,u\otimes_1 u^*)$. 
\end{definition}

\begin{remark}\label{rem:parallels}
A strong equilibrium (Definition~\ref{def:strong E}) parallels the above discrete-time definition, and admits a clear economic interpretation, as explained in Remark~\ref{rem:interpretation}. By contrast, the precise interpretation of a weak equilibrium (Definition~\ref{def:weak E}) is not so clear in the literature. 
\end{remark}

A handy characterization of equilibria can be established, by following the arguments in Proposition~\ref{prop:finding weak E}. To this end,  for any $i\in S$ and $u\in\A$, define
\begin{equation}\label{H}
H_i(u):=\E_{i,u}\left[\sum_{t=0}^\infty \kappa(t+1,X_t,u_{X_t})\right]\quad\text{and}\quad H(u):=(H_1(u),\dotso,H_N(u)).
\end{equation}
For any differentiable function $v:\, \fP\mapsto\R$, let $\nabla v(\alpha)$ be the gradient of $v$ evaluated at $\alpha\in\fP$, and $\partial_n v(\alpha)$ be the $n^{th}$ component of $\nabla v(\alpha)$. Also recall $\fT$ in \eqref{e26}. Proposition~\ref{prop:finding E} below can be proved by following line by line the proof of Proposition~\ref{prop:finding weak E}.

\begin{proposition}\label{prop:finding E}
Let $\kappa(0,i,\cdot)$ be $\mathcal{C}_1$ for all $i\in S$. 
If $u^*\in\A$ be an equilibrium, then for any $i\in S$,
\begin{equation}\label{e3}
\left(\nabla \kappa(0,i,u^*_i)+H(u^*)\right)\cdot\lambda\leq 0,\quad\forall\lambda\in\fT\text{ s.t. }u^*_i+\eps\lambda\in\A_i\text{ for $\eps>0$ small enough}.
\end{equation}
In particular, if $u^*_i$ is a relative interior point of $\A_i$, then
$$\partial_n \kappa(0,i,u^*_i)+H_n(u^*)=\partial_m \kappa(0,i,u^*_i)+H_m(u^*),\quad n,m=1,\dotso,N.$$
Furthermore, if $\kappa(0,i,\cdot)$ is additionally concave for any $i\in S$, then the converse is also true; that is, $u^*$ is an equilibrium if and only if \eqref{e3} holds.
\end{proposition}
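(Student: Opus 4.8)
The plan is to mirror the proof of Proposition~\ref{prop:finding weak E}, since the discrete-time equilibrium condition $V(i,u^*)\ge V(i,u\otimes_1 u^*)$ plays exactly the role that the first-order dominance $\Gamma^{Q^*}(Q_i^*)\ge\Gamma^{Q^*}(Q_i)$ played in the continuous-time case. First I would record the explicit form of $V(i,u\otimes_1 u^*)$ from \eqref{e1}: using the notation $H_i(u)$ and $H(u)$ from \eqref{H}, we have $V(i,u\otimes_1 u^*)=\kappa(0,i,u_i)+u_i\cdot H(u^*)$, where only the $i^{th}$ row of $u$ enters. Thus the equilibrium condition is equivalent to: for every $i\in S$, $u_i^*$ maximizes $\xi\mapsto\kappa(0,i,\xi)+\xi\cdot H(u^*)$ over $\xi\in\A_i$. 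This reformulation is the discrete analogue of Theorem~\ref{thm:weak E iff}, and it is where all the work reduces to.

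Next, for the necessity of \eqref{e3}: fix $i\in S$, fix $\lambda\in\fT$ with $u_i^*+\eps\lambda\in\A_i$ for small $\eps>0$, and apply the maximality of $u_i^*$ against the competitor $u_i^*+\eps\lambda$. This yields $\kappa(0,i,u_i^*)+u_i^*\cdot H(u^*)\ge\kappa(0,i,u_i^*+\eps\lambda)+(u_i^*+\eps\lambda)\cdot H(u^*)$, hence $\frac{\kappa(0,i,u_i^*+\eps\lambda)-\kappa(0,i,u_i^*)}{\eps}+H(u^*)\cdot\lambda\le 0$, and letting $\eps\downarrow 0$ and invoking $\mathcal C_1$-differentiability of $\kappa(0,i,\cdot)$ gives $(\nabla\kappa(0,i,u_i^*)+H(u^*))\cdot\lambda\le 0$. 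The interior-point specialization follows because, when $u_i^*$ is a relative interior point of $\A_i$, the feasible directions $\lambda$ span the tangent space $\fT$, so $\lambda$ and $-\lambda$ are both admissible, forcing equality $(\nabla\kappa(0,i,u_i^*)+H(u^*))\cdot\lambda=0$ for all $\lambda\in\fT$; choosing $\lambda=e_n-e_m$ gives the stated componentwise equality.

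For the converse under concavity: if $\kappa(0,i,\cdot)$ is concave then $\xi\mapsto\kappa(0,i,\xi)+\xi\cdot H(u^*)$ is concave on the convex set $\A_i$, so the first-order condition \eqref{e3} (which says every feasible direction is a non-ascent direction) is sufficient for $u_i^*$ to be a global maximizer over $\A_i$. Since this holds for every $i\in S$, we recover the reformulated equilibrium condition from the first paragraph, hence $u^*$ is an equilibrium by Definition~\ref{def:E}. The one genuinely new verification, compared with the continuous-time proof, is that $H_i(u)$ is well-defined and finite, which is immediate from \eqref{e2} exactly as in \eqref{e0}; there is no differentiability of $V$ in the controlled rows being invoked, so the argument is if anything cleaner than in continuous time.

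I do not expect a serious obstacle here: the whole point of the statement, as the text says, is that it is "proved by following line by line the proof of Proposition~\ref{prop:finding weak E}." The only mild subtlety worth stating carefully is the passage from "all feasible directions are non-ascent" to "global maximum," which needs convexity of $\A_i$ (so that the whole segment from $u_i^*$ to any competitor lies in $\A_i$) together with concavity of the objective; without one of these the first-order condition would not be sufficient. I would flag this explicitly rather than let it ride on the analogy, and otherwise simply refer the reader to the proof of Proposition~\ref{prop:finding weak E} for the routine details.
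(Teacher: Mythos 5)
Your proof is correct and follows essentially the same route as the paper, which simply states that Proposition~\ref{prop:finding E} is proved by following line by line the proof of Proposition~\ref{prop:finding weak E}, and your reformulation $V(i,u\otimes_1 u^*)=\kappa(0,i,u_i)+u_i\cdot H(u^*)$ is exactly the discrete analogue of Theorem~\ref{thm:weak E iff} that drives that argument. Your observation that the converse direction tacitly requires $\A_i$ to be convex (so that a vanishing directional derivative of a concave objective forces a global maximum) is a useful caveat that the paper leaves implicit, though it is consistent with how the result is later applied (e.g.\ Theorem~\ref{thm:existence'} assumes $\A_i$ convex).
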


To establish the existence of equilibria, arguments used to prove Theorem~\ref{thm:existence} can also be applied here. Since $\mathfrak{P}$ in \eqref{tp} is by definition compact, we no longer need the compactness assumption in Theorem~\ref{thm:existence}, leading to the following very general existence result. 

\begin{theorem}\label{thm:existence'}
Suppose $\A_i$ is convex and closed, and $\kappa(0,i,\cdot)$ is concave, for all $i\in S$. Then, there exists an equilibrium.
\end{theorem}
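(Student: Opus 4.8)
The plan is to mimic the proof of Theorem~\ref{thm:existence} (the weak-equilibrium part), replacing the compactness assumption on $D_i$ by the built-in compactness of $\fP$ in \eqref{tp}. The key point is that $H(u)$ defined in \eqref{H} plays exactly the role that $F(Q^*)$ plays in the continuous-time argument, and Proposition~\ref{prop:finding E} reduces being an equilibrium to a simultaneous concave-maximization condition. So I would set up a fixed-point argument on the map that sends a candidate $u^*$ to the set of best responses, and apply Kakutani--Fan.

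Concretely: first, fix $u^*\in\A$ and consider, for each $i\in S$, the map $\xi\mapsto \Psi_i^{u^*}(\xi):=\kappa(0,i,\xi)+H(u^*)\cdot\xi$ on $\A_i$. Since $\A_i$ is convex and closed and contained in the compact set $\fP$, it is convex and compact; since $\kappa(0,i,\cdot)$ is concave and continuous, $\Psi_i^{u^*}$ is concave and continuous, so its set of maximizers $M_i(u^*):=\argmax_{\xi\in\A_i}\Psi_i^{u^*}(\xi)$ is nonempty, convex, and compact. Define the set-valued map $\Phi:\A\rightrightarrows\A$ by $\Phi(u^*):=\prod_{i\in S}M_i(u^*)$. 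Any fixed point $u^*\in\Phi(u^*)$ satisfies: for each $i$, $u_i^*$ maximizes $\xi\mapsto\kappa(0,i,\xi)+H(u^*)\cdot\xi$ over $\A_i$; by the concave case of Proposition~\ref{prop:finding E} (the ``if'' direction — maximality of $\Psi_i^{u^*}$ at $u_i^*$ gives \eqref{e3}, which under concavity is equivalent to being an equilibrium), $u^*$ is an equilibrium. Note $\A=\prod_i\A_i$ is convex and compact in $\R^{N\times N}$.

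To invoke Kakutani--Fan I need $\Phi$ to have nonempty convex values (done above) and a closed graph (equivalently, upper hemicontinuity into the compact set $\A$). This follows from the continuity of $(u^*,\xi)\mapsto\Psi_i^{u^*}(\xi)$, which in turn needs $u^*\mapsto H(u^*)$ to be continuous on $\A$ — this is the one substantive step. It should follow from \eqref{e2}: writing $H_i(u)=\sum_{t\ge0}\E_{i,u}[\kappa(t+1,X_t,u_{X_t})]$, the transition probabilities $\P_{i,u}(X_t=j)$ are polynomials in the entries of $u$ hence continuous, each summand $\sum_{j}\P_{i,u}(X_t=j)\,\kappa(t+1,j,u_j)$ is continuous in $u$, and \eqref{e2} provides a summable majorant $\sup_{(i,\alpha)}|\kappa(t+1,i,\alpha)|$ uniform in $u$, so the series converges uniformly and $H$ is continuous by a standard dominated-convergence/uniform-convergence argument. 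Given continuity of $H$, Berge's maximum theorem gives upper hemicontinuity of each $M_i$, hence a closed graph for $\Phi$; Kakutani--Fan then yields a fixed point, completing the proof.

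I expect the main (and really only) obstacle to be the continuity of $u\mapsto H(u)$, specifically justifying the interchange of the limit (in $u$) with the infinite sum over $t$; everything else is a routine transcription of the Theorem~\ref{thm:existence} argument, using that $\fP$ — and hence each closed subset $\A_i$ — is automatically compact so no extra boundedness hypothesis is needed. One should also double-check that closedness of the graph of $\Phi$ together with $\A$ compact is exactly the hypothesis of the Kakutani--Fan fixed-point theorem as used in Appendix~\ref{sec:existence}, so that the citation there can be reused verbatim.
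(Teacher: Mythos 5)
Your overall strategy is exactly the paper's: build a best-response correspondence $\Phi$ on the compact set $\A=\prod_i\A_i$, establish nonempty convex compact values and upper hemicontinuity (the latter hinging on continuity of $u\mapsto H(u)$, proven by truncating the series using \eqref{e2}), and apply Kakutani--Fan. The paper's $\Phi(u)$, defined via $\argmax_{u'\in\A}V(i,u'\otimes_1 u)$, coincides row-by-row with your $M_i(u^*)$ because \eqref{e1} gives $V(i,u\otimes_1 u^*)=\kappa(0,i,u_i)+H(u^*)\cdot u_i=\Psi_i^{u^*}(u_i)$, so the two constructions are the same correspondence in different notation.

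The one place you deviate is also the one place you introduce a gap: you close the argument by invoking Proposition~\ref{prop:finding E} to conclude that a fixed point is an equilibrium. That proposition requires $\kappa(0,i,\cdot)\in\mathcal{C}_1$, which Theorem~\ref{thm:existence'} does not assume (concavity alone does not give differentiability). This detour is unnecessary: since $\Psi_i^{u^*}(\xi)=V(i,u\otimes_1 u^*)$ whenever $u_i=\xi$, a fixed point $u^*\in\Phi(u^*)$ satisfies $V(i,u^*)=\Psi_i^{u^*}(u_i^*)\ge\Psi_i^{u^*}(u_i)=V(i,u\otimes_1 u^*)$ for every $i\in S$ and $u\in\A$, which is precisely Definition~\ref{def:E}. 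This is how the paper concludes, and replacing your appeal to Proposition~\ref{prop:finding E} by this direct observation removes the unwanted smoothness requirement. The rest of your argument — compactness from $\fP$, continuity of $H$ by summable domination from \eqref{e2}, Berge plus Kakutani--Fan — is sound and matches the paper's proof.
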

The proof of Theorem~\ref{thm:existence'} is relegated to Appendix~\ref{sec:existence'}.

\begin{remark}
If $\kappa(0,i,\cdot)$ is not concave, then in general an equilibrium may not exist. For instance, take $S=\{1,2\}$ and denote any transition matrix 
\begin{equation}\label{tM}
u=
\begin{bmatrix}
    1-\alpha  & \alpha \\
    \beta      & 1-\beta
    \end{bmatrix},\quad \alpha,\beta\in[0,1]
\end{equation}
by $u\sim (\alpha,\beta)$. Let $\delta(0)=1$ and $\delta(t)=k\cdot\lambda^{t-1}$ for $t\in \N$, where $k\in (0,1)$ is a constant. Consider
\[
\kappa(t,1,u_1)=\delta(t)g_1(\alpha)\quad\text{and}\quad \kappa(t,2,u_1)=\delta(t)g_2(\beta),
\]
where 
\[
g_1(\alpha):=-\frac{7}{8}\sqrt{\alpha}\quad\text{and}\quad g_2(\beta):=2-\sqrt{1-\beta}, 
\]
which are strictly convex. 
Given $u^*\sim (\alpha^*,\beta^*)$ and $u\sim (\alpha,\beta)$, direct calculation shows
\begin{align*}
V_1(\alpha) := V(1,u\otimes_1 u^*)&=g_1(\alpha)+H_1(u^*)\cdot(1-\alpha)+H_2(u^*)\cdot \alpha,\\
V_2(\beta) := V(2,u\otimes_1 u^*)& =g_2(\beta)+H_1(u^*)\cdot \beta+H_2(u^*)\cdot(1-\beta).
\end{align*}
If $u^*\sim (\alpha^*,\beta^*)$ is an equilibrium, $V_1(\alpha)$ and $V_2(\beta)$ attain maximums at $\alpha=\alpha^*$ and $\beta=\beta^*$, respectively. The strict convexity of $g_1$ and $g_2$ implies $\alpha^*, \beta^*\in \{0,1\}$. If $u^*\sim (0,0)$, then $H_1(u^*)=0$ and $H_2(u^*)=\frac{3}{2}$, and thus
$V_1(\alpha) =-\frac{7}{8}\sqrt{\alpha}+\frac{3}{2}\alpha.$ But, $V_1(0)=0<V_1(1)=\frac{5}{8}$ implies $(0,0)$ cannot be an equilibrium. Similar calculation shows none of $(0,1)$, $(1,0)$, $(1,1)$ is an equilibrium.
\end{remark}


\subsection{Convergence to Continuous Time}

Recall the continuous-time setup in Section~\ref{sec:setup}. For the continuous-time payoff function $f$, we further assume that there exists $T>0$, independent of $i$ and $\bq$, such that  
\begin{equation}\label{e33}
t\mapsto |f(t,i,\bq)|\quad \hbox{is nonincreasing}, \quad \hbox{for $t\ge T$}. 
\end{equation}
Take $(\delta_n)_{n\in\N}$ in $\R$ with $\delta_n\downarrow 0$. For each $n\in\N$, define $\kappa^n: \bar\N\times S\times\mathfrak{P}\to \R$ by
\begin{equation}\label{f^n}
\kappa^n(k,i,\alpha):=f(k\delta_n,i,\tilde\alpha^{i,n})\cdot\delta_n,
\end{equation}
where 
\[
\tilde\alpha^{i,n}:=\frac{1}{\delta_n}(\alpha_1,\dotso,\alpha_{i-1},\alpha_i-1,\alpha_{i+1},\dotso,\alpha_N),
\]
for each $\alpha=(\alpha_1,\dotso,\alpha_N)\in\mathfrak{P}$. For any transition matrix $u$, define the generator $Q^{u,n}=(q_{ij}^{u,n})$ by
\begin{equation}\label{Q^u,n}
q_{ij}^{u,n}:=
\begin{cases}
\frac{1}{\delta_n}u_{ij},& j\neq i,\\
\frac{1}{\delta_n}(u_{ii}-1),& j=i.
\end{cases}
\end{equation}
Then, we introduce 
\begin{equation}\label{A^n}
\mathcal{A}^n:=\{u\ \text{transition matrix}: Q^{u,n}\in\mathcal{Q}\}.
\end{equation}

For each $n\in\N$, consider the discretized problem $V^n$ given by \eqref{e0}, with $\kappa$ and $\A$ replaced by $\kappa^n$ and $\mathcal{A}^n$. 
Suppose that there exists an equilibrium $u^n\in\mathcal{A}^n$, i.e. $V^n(i,u^n)\ge V^n(i,u\otimes_1 u^n)$ for all $i\in S$ and $u\in\A^n$. Following the notation in \eqref{H}, this means for any $i\in S$ and $u\in\mathcal{A}^n$,
\begin{equation}\label{e35}
\kappa^n(0,i,u^n_i)+H^n(u^n)\cdot u^n_i\geq \kappa^n(0,i,u_i)+H^n(u^n)\cdot u_i,
\end{equation}
where $H^n$ is defined as in \eqref{H} with $\kappa$ replaced by $\kappa^n$, i.e.
\begin{equation}\label{e31}
H_i^n(u^n):=\E_{i,u^n}\left[\sum_{k=0}^\infty \kappa^n(k+1,X_k,u^n_{X_k})\right],\quad \forall i\in S.
\end{equation}
In the following, we will write
\[
Q^n := Q^{u^n,n}\in \cQ.
\]
for simplicity. 
The main convergence result is the following.

\begin{theorem}\label{thm:convergence}
Assume \eqref{integrability}, \eqref{e33}, and that $f(\cdot,i,\cdot)$ is continuous for all $i\in S$. 
If there exists $Q^*\in\mathcal{Q}$ such that (up to a subsequence) $Q^n\to Q^*$, then $Q^*$ satisfies \eqref{optimal ROC} for all $(i,Q)\in S\times \cQ$. That is, $Q^*$ is a weak equilibrium (under Definition~\ref{def:weak E}). 
\end{theorem}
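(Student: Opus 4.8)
The goal is to pass to the limit in the discrete equilibrium inequality \eqref{e35} and recover the continuous-time first-order condition \eqref{optimal ROC}, i.e.\ $\Gamma^{Q^*}(Q^*_i)\ge\Gamma^{Q^*}(Q_i)$ for all $(i,Q)$. The plan is to rewrite \eqref{e35} entirely in terms of the generators $Q^n$ and $Q^{u,n}$ by substituting the definitions \eqref{f^n} and \eqref{Q^u,n}: the term $\kappa^n(0,i,u^n_i)=f(0,i,(Q^n)_i)\,\delta_n$, and the summand $H^n_i(u^n)\cdot u^n_i$ should, after dividing by $\delta_n$, turn into something of the form $f(0,i,(Q^n)_i)+ (Q^n)_i\cdot H^n(u^n)/\delta_n + (\text{row-}i\text{ error})$, where $H^n(u^n)/\delta_n$ is a Riemann-sum approximation of the integral defining $F_{\delta_n}(Q^n)$ (shifted by one time step $\delta_n$). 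So step one is the bookkeeping: divide \eqref{e35} by $\delta_n$ and identify each piece.

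\textbf{Key steps.} First, establish that $\tfrac{1}{\delta_n}H^n(u^n)\to F(Q^*)$ as $n\to\infty$ (along the chosen subsequence). This is the analytic heart of the argument. One writes $\tfrac1{\delta_n}H^n_i(u^n)=\E_{i,u^n}\big[\sum_{k\ge0} f((k+1)\delta_n, X_k, (Q^n)_{X_k})\,\delta_n\big]$, which is a Riemann-type sum for $\int_0^\infty f(t,X_t^{Q^n},(Q^n)_{X_t})\,dt$. Convergence of the discrete-time Markov chain with transition matrix $u^n=I+\delta_n Q^n$ to the continuous-time chain with generator $Q^*$ (since $Q^n\to Q^*$) is standard; combined with the continuity of $f(\cdot,i,\cdot)$, the integrability bound \eqref{integrability}, and the monotone tail control \eqref{e33} (which gives a uniform integrable dominating function so that the infinite sums/integrals and the $n\to\infty$ limit can be interchanged by dominated convergence), this yields $\tfrac1{\delta_n}H^n(u^n)\to F(Q^*)$. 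The role of \eqref{e33} is precisely to tame the tail of the series uniformly in $n$, since a naive Riemann sum of $|f|$ over $[T,\infty)$ is controlled by $\int_{T-\delta_n}^\infty \sup|f|\,dt<\infty$ once $t\mapsto\sup|f(t,\cdot,\cdot)|$ is eventually nonincreasing.

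\textbf{Concluding.} Once the limit $\tfrac1{\delta_n}H^n(u^n)\to F(Q^*)$ is in hand, fix an arbitrary $(i,Q)\in S\times\cQ$ and choose $u\in\A^n$ with $Q^{u,n}=$ (a generator agreeing with $Q$ in row $i$ and with $Q^n$ elsewhere, say), which is admissible for $n$ large since $\cA^n$ is defined through $\cQ$; substitute into \eqref{e35}, divide by $\delta_n$, and let $n\to\infty$. Using $f(0,i,(Q^n)_i)\to f(0,i,Q^*_i)$ and $(Q^n)_i\to Q^*_i$ together with the convergence of $\tfrac1{\delta_n}H^n(u^n)$, both sides converge and the inequality passes to the limit, giving $f(0,i,Q^*_i)+Q^*_i\cdot F(Q^*)\ge f(0,i,Q_i)+Q_i\cdot F(Q^*)$, i.e.\ $\Gamma^{Q^*}(Q^*_i)\ge\Gamma^{Q^*}(Q_i)$. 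Since $(i,Q)$ was arbitrary, \eqref{optimal ROC} holds for all $(i,Q)\in S\times\cQ$, and $Q^*$ is a weak equilibrium by Theorem~\ref{thm:weak E iff} (note \eqref{f conti.'}, needed there, follows from \eqref{integrability}, \eqref{e33}, and continuity by a standard argument, or one invokes the characterization \eqref{optimal ROC} directly). The main obstacle is the uniform-in-$n$ tail estimate making the Riemann-sum-to-integral passage rigorous; everything else is routine once that dominated-convergence step is secured.
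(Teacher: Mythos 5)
Your approach is essentially the same as the paper's: divide \eqref{e35} by $\delta_n$, identify the resulting inequality with a discrete version of \eqref{optimal ROC}, prove a key lemma about the convergence of the $H^n$ quantities, and pass to the limit using the continuity of $f$ together with tail control furnished by \eqref{integrability} and \eqref{e33}. The paper isolates the key lemma as Lemma~5.2 and proves it by splitting $|H_i^n(u^n)-F_i(Q^*)|$ into three pieces (discrete-vs-continuous chain, Riemann-sum-vs-integral, continuity of $Q\mapsto F(Q)$), with \eqref{e33} used exactly as you anticipate: to get a uniform-in-$n$ tail estimate on the infinite sums.

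One bookkeeping slip worth flagging: you carry a spurious factor of $1/\delta_n$. Since $\kappa^n(k,i,\alpha)=f(k\delta_n,i,\tilde\alpha^{i,n})\,\delta_n$ already carries the $\delta_n$ factor, the quantity $H_i^n(u^n)=\E_{i,u^n}[\sum_{k\ge 0}\kappa^n(k+1,X_k,u^n_{X_k})]$ is \emph{itself} the Riemann sum $\E_{i,u^n}[\sum_{k\ge 0}f((k+1)\delta_n,X_k,Q^n_{X_k})\,\delta_n]$, so the correct key lemma is $H^n(u^n)\to F(Q^*)$, not $H^n(u^n)/\delta_n\to F(Q^*)$. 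Correspondingly, after you divide \eqref{e35} by $\delta_n$, using $u^n_{ij}=\delta_n q^n_{ij}$ ($j\neq i$), $u^n_{ii}-1=\delta_n q^n_{ii}$, the cross term becomes $H^n(u^n)\cdot Q^n_i$ (no $1/\delta_n$); the $H^n_i(u^n)/\delta_n$ row-$i$ term cancels identically between the two sides. Your two $1/\delta_n$ errors offset so the limiting inequality you reach is correct, but as written the formula $\tfrac{1}{\delta_n}H^n_i(u^n)=\E_{i,u^n}[\sum_k f((k+1)\delta_n,X_k,Q^n_{X_k})\,\delta_n]$ is off by a factor of $\delta_n$ and would not pass a literal check.
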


\begin{remark}
Suppose that for the continuous-time problem $F$ in \eqref{F}, $D_i$ is a closed convex set and $f(0,i,\cdot)$ is concave for all $i\in S$. Then, in view of \eqref{f^n} and \eqref{A^n}, Theorem~\ref{thm:existence'} implies that an equilibrium $u^n\in\A^n$ exists for the discretized problem $V^n$, for all $n\in\N$. If we further assume that $D_i$ is bounded for all $i\in S$, then $(Q^n)_{n\in\N}$ is pre-compact. Then, by Theorem~\ref{thm:convergence}, any limit point of $(Q^n)_{n\in\N}$ is a weak equilibrium (under Definition~\ref{def:weak E}).
\end{remark}

To establish Theorem~\ref{thm:convergence}, we need the following technical lemma.

\begin{lemma}\label{l1}
Assume \eqref{integrability}, \eqref{e33}, and that $f(\cdot,i,\cdot)$ is continuous for all $i\in S$. 
If there exists $Q^*\in\mathcal{Q}$ such that (up to a subsequence) $Q^n\to Q^*$, then for any $i\in S$,
\[
H_i^n(u^n)\rightarrow F_i(Q^*)=\E_{i,Q^*}\left[\int_0^\infty f(t,X_t,Q^*_{X_t}) dt\right]\quad \hbox{as}\ n\to\infty.
\]
\end{lemma}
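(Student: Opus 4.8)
\textbf{Proof proposal for Lemma~\ref{l1}.}

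The plan is to show that the discrete sum $H_i^n(u^n)$ is a Riemann-sum approximation of the continuous-time integral $F_i(Q^*)$, and to control the error through two separate effects: the discretization of the integral (replacing $f(\cdot)$ by a step function sampled at the mesh points $k\delta_n$), and the convergence of the law of the discrete-time chain under $u^n$ to the law of the continuous-time chain under $Q^*$. First I would rewrite $H_i^n(u^n)$ using \eqref{f^n} and \eqref{Q^u,n}: since $\kappa^n(k+1,j,u^n_j) = f((k+1)\delta_n, j, (Q^n)_j)\,\delta_n$ (the tilde operation in \eqref{f^n} precisely recovers the off-diagonal-normalized row $(Q^{u,n})_j$, which lies in $D_j$), we get
\[
H_i^n(u^n) = \sum_{k=0}^\infty \sum_{j\in S} \P_{i,u^n}(X_k = j)\, f((k+1)\delta_n, j, (Q^n)_j)\,\delta_n .
\]
I would compare this with $F_i(Q^*) = \int_0^\infty \sum_{j\in S}\P_{i,Q^*}(X_t=j)\,f(t,j,Q^*_j)\,dt$, written as $\sum_{k=0}^\infty \int_{k\delta_n}^{(k+1)\delta_n}(\cdots)dt$.

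The argument then splits into standard steps. \textbf{Step 1 (uniform integrability tail bound).} Using \eqref{integrability} and \eqref{e33}, for any $\eta>0$ choose $T'\ge T$ with $\int_{T'}^\infty \sup_{j,\bq\in D_j, \|\bq\|\le c}|f(t,j,\bq)|\,dt <\eta$, where $c$ is a uniform bound on $\|(Q^n)_j\|$ and $\|Q^*_j\|$ (these are bounded since $Q^n\to Q^*$, so the tail can be dominated along the sequence as well; monotonicity \eqref{e33} lets me bound each discrete tail term by the corresponding integral). This reduces both sums to finitely many terms $k\delta_n \le T'$, uniformly in $n$. \textbf{Step 2 (discretization error on $[0,T']$).} On the compact set $[0,T']$, $f(\cdot,j,\cdot)$ is uniformly continuous (jointly, by the continuity hypothesis), and $(Q^n)_j\to Q^*_j$, so $f((k+1)\delta_n, j, (Q^n)_j)\to f(t,j,Q^*_j)$ uniformly for $t\in[k\delta_n,(k+1)\delta_n]$ as $n\to\infty$; the number of relevant $k$'s is $O(T'/\delta_n)$ but each error is $o(1)$ times $\delta_n$, so the total is $o(1)$. \textbf{Step 3 (convergence of the chain's law).} Show $\P_{i,u^n}(X_k=j)\to \P_{i,Q^*}(X_t=j)$ whenever $k\delta_n\to t$. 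This follows because the $k$-step transition matrix of $u^n$ is $(I+\delta_n Q^n)^k$, and $(I+\delta_n Q^n)^{\lfloor t/\delta_n\rfloor}\to e^{tQ^*}$ uniformly on $[0,T']$ by the standard matrix-exponential limit (using $Q^n\to Q^*$ and $\delta_n\to 0$); here one needs $\delta_n$ small enough that $I+\delta_n Q^n$ is a genuine stochastic matrix, which is exactly the membership $u^n\in\A^n$. Combining Steps 1--3 via a triangle inequality gives $H_i^n(u^n)\to F_i(Q^*)$.

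The main obstacle I anticipate is \textbf{Step 3 combined with Step 1}: getting the tail estimate and the matrix-exponential convergence to hold \emph{uniformly along the sequence} $n$, rather than for a fixed chain. Because the generators $Q^n$ themselves vary with $n$, one cannot simply quote a single-chain Riemann-sum lemma; one must verify that the bound $\|(Q^n)_j\|\le c$ is available (immediate from convergence of $Q^n$, so the row sums and hence the spectral radii are uniformly bounded) and then that $\sup_{k: k\delta_n\le T'}\|(I+\delta_n Q^n)^k - e^{k\delta_n Q^*}\|\to 0$. The cleanest route is the elementary estimate $\|(I+\delta_n Q^n)^k - e^{k\delta_n Q^n}\| \le k\delta_n^2 \|Q^n\|^2 e^{k\delta_n\|Q^n\|}/2 \le \tfrac12 T'\delta_n c^2 e^{T'c}\to 0$, followed by $\|e^{k\delta_n Q^n}-e^{k\delta_n Q^*}\|\le k\delta_n\|Q^n-Q^*\|e^{T'c}\to 0$; both are uniform in the relevant range of $k$. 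The rest is bookkeeping with the triangle inequality and the uniform tail bound from \eqref{integrability}--\eqref{e33}.
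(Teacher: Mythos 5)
Your proposal is correct, and it diverges from the paper's argument in one noteworthy way. The paper splits the error into three pieces: $I_1^n$, comparing the law of the discrete chain under $u^n$ with the law of the sampled continuous chain $(X_{k\delta_n})_k$ under $Q^n$ (transition matrix $\tilde u^n = e^{\delta_n Q^n}$); $I_2^n$, the Riemann-sum discretization error for a \emph{fixed} generator $Q^n$; and $I_3^n = |F_i(Q^n)-F_i(Q^*)|$, which the paper disposes of by appealing to the continuity argument in the proof of Theorem~\ref{thm:existence}, namely weak convergence of the process laws in the Skorokhod space $D([0,\infty);S)$ plus the Skorokhod representation theorem. Your Step 3 collapses the paper's $I_1^n$ and the ``law'' part of $I_3^n$ into a single elementary calculation: $\|(I+\delta_n Q^n)^k - e^{k\delta_n Q^n}\|$ and $\|e^{k\delta_n Q^n}-e^{k\delta_n Q^*}\|$ are both $O(\delta_n) + O(\|Q^n-Q^*\|)$ uniformly for $k\delta_n\le T'$, using only Lipschitz estimates for matrix powers and exponentials. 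This avoids invoking the Feller/Ethier--Kurtz weak convergence result and the Skorokhod representation theorem entirely, which is arguably simpler and more self-contained in the finite-state setting; the tradeoff is that you must carry the $\|Q^n\|\le c$ uniformity along the sequence by hand (which you note and which is immediate from $Q^n\to Q^*$). Your Steps 1 and 2 (the uniform tail bound via \eqref{integrability}--\eqref{e33} and the uniform-continuity-on-compacts estimate for $f$) match the paper's handling of the tails and of $I_2^n$ in substance, so overall this is a correct proof that replaces the paper's probabilistic weak-convergence machinery with a direct matrix-norm estimate.
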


The proof of Lemma~\ref{l1} is relegated to Appendix~\ref{sec:l1}. 

\begin{proof}[Proof of Theorem \ref{thm:convergence}]
For any $i\in S$ and $u\in\mathcal{A}^n$, \eqref{e35} can be re-written as 
\begin{align*}
f(0,i,Q^n_i)&\cdot\delta_n+\sum_{j\neq i}H_j^n(u^n)\cdot u_{ij}^n+H_i^n(u^n)(u_{ii}^n-1)\\
&\geq f(0,i,Q^{u,n}_i)\cdot\delta_n+\sum_{j\neq i}H_j^n(u^n)\cdot u_{ij}+H_i^n(u^n)(u_{ii}-1).
\end{align*}
Dividing both sides by $\delta_n$ yields
\[
f(0,i,Q^n_i)+\sum_{j\neq i}H_j^n(u^n)\cdot q_{ij}^n+H_i^n(u^n)\cdot q_{ii}^n\geq f(0,i,Q^{u,n}_i)+\sum_{j\neq i}H_j^n(u^n)\cdot q_{ij}^{u,n}+H_i^n(u^n)\cdot q_{ii}^{u,n},
\]
which is equivalent to 
$
f(0,i,Q^n_i)+H^n(u^n)\cdot Q^n_i\geq f(0,i,Q_i)+H^n(u^n)\cdot Q_i.
$
Thanks to the continuity of $f(0,i,\cdot)$ and Lemma \ref{l1}, sending $n\rightarrow\infty$ gives \eqref{optimal ROC}.
\end{proof}

In general, the limit point $Q^*$ in Theorem~\ref{thm:convergence} need not be a strong equilibrium. This is demonstrated in the next example: the equilibria for discretized problems converge uniquely to a weak equilibrium for the continuous-time problem, but this weak equilibrium is not strong.

\begin{example}\label{eg:convergence}
Consider the two-state model in Section~\ref{sec:examples}. Take $\lambda=\frac{1}{2}$, $\rho=1$, $\rho'=2$, 
$$g_2(b)\equiv 0\quad\text{and}\quad g_1(a)=-\frac{1}{4}a^4+ka^3-k^2a^2-\frac{3}{4}a-1,$$
where $k>0$ is a constant. Note that $g_1'(a)=-a(a-k)(a-2k)-\frac{3}{4}$ and $g_1(0)=-1.$

First, we show that $Q^*\sim(0,0)$ is a weak equilibrium that is not strong. By \eqref{e30},  $F_1(0,0)-F_2(0,0)=\frac{3}{4}g_1(0)=-\frac{3}{4}.$
Then, for any $Q\sim (a,b)$, \eqref{G1} and \eqref{G2} imply
$$\Gamma^{Q^*}(Q_1)=g_1(a)+\frac{3}{4}a\quad\text{and}\quad\Gamma^{Q^*}(Q_2)=-\frac{3}{4}b.$$
This shows that $\Gamma^{Q^*}(Q_1)$ (resp. $\Gamma^{Q^*}(Q_2)$) is maximized at $a=0$ and $a=2k$  (resp. at $b=0$). Hence, Theorem~\ref{thm:weak E iff} readily implies $Q^*\sim(0,0)$ is a weak equilibrium. On the other hand, consider $\hat Q\sim(2k,0)$. Note that $\Gamma^{Q^*}(Q^*_1)=\Gamma^{Q^*}(\hat Q_1)$, and direct calculation shows
\[
\Lambda^{Q^*}(1,Q^*)=\frac{3}{2}<\frac{33}{4}k+\frac{3}{2}=\Lambda^{Q^*}(1,\hat Q),
\]
thanks to \eqref{e30'} and \eqref{L1}. Thus, by Proposition~\ref{prop:sufficient for strong E}, $Q^*\sim(0,0)$ is not a strong equilibrium. 


Now, consider the discretized problems. Denote by $u\sim (\alpha, \beta)$ the transition matrix given as in \eqref{tM}. 
For each $h>0$, consider the discretized problem $V^h$ as in \eqref{e0}, where $\kappa$ is replaced by 
$$\kappa^h(n,1,u_1)=\delta(nh)g_1(\alpha/h) h\quad\text{and}\quad \kappa^h (n,2,u_2)=\delta(nh)g_2(\beta/h) h\equiv 0,$$
with $\delta(\cdot)$ as in \eqref{pedf}. We claim that $u^*\sim(0,0)$ is an equilibrium for $V^h$, for $h>0$ small enough.

Fix $u\sim(\alpha,\beta)\neq(0,0)$. Recall \eqref{e0} and \eqref{e1}. If $\beta>0$, since $g_1(0)<0$, we have
\begin{equation}\label{V2}
V^h(2,u\otimes_1 u^*)=\beta\sum_{n=0}^\infty \delta((n+1) h)g_1(0) h<0=V^h(2,u^*),\quad \forall h>0.
\end{equation}
If $\alpha>0$, then using $g_1(0)=-1$,
\begin{align*}
V^h(1,u\otimes_1 u^*)&=g_1(\alpha/h) h +(1-\alpha)\sum_{n=1}^\infty\frac{1}{2}(e^{-nh}+e^{-2nh})g_1(0) h\\
&= h \left[ g_1(\alpha/h)+\frac{\alpha}{2}\sum_{n=1}^\infty(e^{-nh}+e^{-2nh})-\frac{1}{2}\sum_{n=1}^\infty(e^{-nh}+e^{-2nh})\right].
\end{align*}
It can be checked that
$$\sum_{n=1}^\infty e^{-nh}=\frac{1}{h}\left(1-\frac{1}{2}h+o(h)\right),$$
implying
$$V^h(1,u\otimes_1 u^*)=h \bigg[\tilde g(h)-\frac{1}{2}\sum_{n=1}^\infty(e^{-nh}+e^{-2nh})\bigg],$$
where
$\tilde g(h):=g_1(\alpha/h)+\frac{\alpha}{h}\left(\frac{3}{4}-\frac{1}{2}h+o(h)\right).$
For $h>0$ small enough, $\tilde g(h)<g_1(\alpha/h)+\frac{3}{4}\frac{\alpha}{h}\le g_1(0)=-1,$ where the last inequality follows from the fact that $a\mapsto g_1(a)+\frac{3}{4}a$ is maximized at $a=0$ and $a=2k$ (this was mentioned above when we maximized $\Gamma^{Q^*}(Q_1)$). This yields 
\begin{equation}\label{V1}
V^h(1,u\otimes_1 u^*)<\frac{1}{2}\sum_{n=0}^\infty(e^{-nh}+e^{-2nh})g_1(0) h=V^h(1,u^*).
\end{equation}
By \eqref{V2} and \eqref{V1}, $u^*\sim (0,0)$ is an equilibrium for $V^h$, for $h>0$ small enough. In view of \eqref{Q^u,n}, $Q^h := Q^{u^*,h} \to Q^* \sim (0,0)$ holds trivially, as $Q^h\sim(0,0)$ for $h>0$ small enough. 
\end{example}

It seems somewhat surprising that discrete-time equilibria need not converge to a strong equilibrium. After all, as observed in Remark~\ref{rem:parallels}, strong equilibria {\it conceptually} parallel discrete-time equilibria: Definitions~\ref{def:strong E} and \ref{def:E} both require {\it direct} dominance in value, instead of {\it indirect} dominance via the rate of change in value (as stipulated  for weak equilibria in Definition~\ref{def:weak E}). 

Despite this conceptual resemblance, 
achieving ``direct dominance in value'' in continuous time is much more demanding {\it technically} than that in discrete time.
For discretized problems, the direct dominance ``$V^n(i,u^n)\ge V^n(i,u\otimes_1 u^n)$'' is simply \eqref{e35}. For the continuous-time problem, the direct dominance ``$F(i,Q^*)\ge F(i,Q\otimes_\eps Q^*)$ for $\eps>0$ small enough'' amounts to nonnegativity of  \eqref{2rd expan.}. Crucially, as $n\to\infty$, \eqref{e35} gives nonnegativity of {\it only} the first-order term of \eqref{2rd expan.} (i.e. \eqref{optimal ROC}), instead of the entire \eqref{2rd expan.}. That is to say, the very concept ``direct dominance in value'' is much harder to achieve in continuous time, thereby posing a technical gap between discrete-time equilibria and strong equilibria. 

Appropriate conditions on the payoff function $f$ can bridge the gap. 

\begin{remark}
If ${\bf q}\mapsto f(t,i,{\bf q})$ is strictly concave, then the limit point $Q^*$ in Theorem~\ref{thm:convergence} is a strong equilibrium, thanks to Proposition \ref{coro:sufficient for strong E}.
\end{remark}

We demonstrate in the next example that with ${\bf q}\mapsto f(t,i,{\bf q})$ being strictly concave, the equilibria for discretized problems indeed converge to a strong equilibrium. 

\begin{example}
Recall the setting in Example \ref{eg:1}. Take $(\delta_n)_{n\in\N}$ in $\R$ with $\delta_n\downarrow 0$, and define $\kappa^n$ and $H_i^n$ as in \eqref{f^n} and \eqref{e31}. Given  $u\sim (\alpha,\beta)$ as in \eqref{tM}, we abuse the notation slightly by writing
$\kappa^n(k,1,\alpha)=\delta(k\delta_n)\cdot g_1(\alpha/\delta_n)\cdot\delta_n$, $\kappa^n(k,2, \beta)=\delta(k\delta_n)\cdot g_2(\beta/\delta_n)\cdot\delta_n$, and
$$H_i^n(\alpha,\beta):=\E_{i,u}\bigg[\sum_{k=0}^\infty\kappa^n(k+1,X_k, u_{X_k})\bigg].$$
As $\kappa^n(0,i,\cdot)$ is concave, by Proposition \ref{prop:finding E}, $u^n\sim (\alpha,\beta)$ is an equilibrium (with respect to $\kappa^n$) if and only if $(\alpha,\beta)$ satisfies
\begin{equation}\label{ea1}
(\kappa^n(0,1,\alpha))'+H_2^n(\alpha,\beta)-H_1^n(\alpha,\beta)
\begin{cases}
\leq 0,& \alpha=0,\\
=0,& \alpha\in (0,1),\\
\geq 0,& \alpha=1,
\end{cases}
\end{equation}
\begin{equation}\label{ea2}
(\kappa^n(0,2,\beta))'+H_1^n(\alpha,\beta)-H_2^n(\alpha,\beta)
\begin{cases}
\leq 0,& \beta=0,\\
=0,& \beta\in (0,1),\\
\geq 0,& \beta=1,
\end{cases}
\end{equation}
where the derivatives are taken with respect to $\alpha$ and $\beta$, respectively. Direct calculation yields
$$H_1^n(\alpha,\beta)-H_2^n(\alpha,\beta)=(g_1(\alpha/\delta_n)-g_2(\beta/\delta_n))\cdot \delta_n \cdot\sum_{k=0}^\infty\delta((k+1)\delta_n)\cdot (1-\alpha-\beta)^k.$$
Then, for $\delta_n>0$ small enough, the solution $(\alpha_n,\beta_n)$ to \eqref{ea1} and \eqref{ea2} is given by
$$\alpha_n:=\frac{\delta_n^2}{2}\left(\frac{e^{-\delta_n}}{{1-e^{-\delta_n}(1-\delta_n)}}+\frac{e^{-2\delta_n}}{{1-e^{-2\delta_n}(1-\delta_n)}}\right),\quad\beta_n:=\delta_n-\alpha_n. $$
In view of \eqref{Q^u,n}, each discrete-time equilibrium $u^n\sim (\alpha_n,\beta_n)$ gives rise to the generator $Q^n \sim \big(\frac{\alpha_n}{\delta_n},\frac{\beta_n}{\delta_n}\big) $.  As $n\to\infty$, it can be checked that $Q^n\sim\big(\frac{\alpha_n}{\delta_n},\frac{\beta_n}{\delta_n}\big)$ converges to $Q^*\sim \left(\frac{5}{12},\frac{7}{12}\right)$, which is a strong equilibrium as shown in Example \ref{eg:1}.
\end{example}


\section{Conclusions}\label{sec:conclusions}

In this paper, we introduce the new notion of {\it strong equilibria}, as a refinement of the standard formulation of continuous-time equilibria  (which we call {\it weak equilibria}). As we have shown, there are situations where one finds it beneficial to deviate from a weak equilibrium, indicating that this standard formulation does not correspond perfectly to the equilibrium idea. A strong equilibrium, by contrast, is defined analogously to a discrete-time equilibrium, and admits an unambiguous interpretation of no deviation. 
To elucidate the difference and connection between these two types of equilibria, we assume that the state process is a continuous-time finite-state Markov chain on an infinite horizon. This allows us to derive complete characterizations of strong and weak equilibria, and compare them explicitly in concrete examples. It is of interest to investigate the case where the state process is a diffusion process, the typical setup in prior literature on time-inconsistent stochastic control in continuous time. A recent working paper He and Jiang \cite{HJ18} pursues this direction. In a diffusion model, they observe from several classical time-inconsistent problems that strong equilibria seem quite elusive. They propose ``{\it regular} equilibria'', a new class of equilibria that are slightly weaker than the strong ones, and show that regular equilibria are more tractable. Still, the existence of strong equilibria in a diffusion model demands further exploration. 

In fact, whenever an infinite state space or a finite time horizon is considered, the {existence} of equilibria is a genuine issue. Note that the existence proof in Section \ref{sec:existence} requires the set of admissible controls, i.e. $\cQ$, to be compact. In this paper, since $\cQ$ is finite-dimensional (as a subset of $\R^{N\times N}$), its compactness can be easily checked and appropriately assumed through closedness and boundedness. An infinite state space or a finite time horizon, however, renders the set of admissible controls infinite-dimensional. With no straightforward characterization of compactness in an infinite-dimensional space, the existence of equilibria is largely obscure. 

Time-inconsistent stopping problems in continuous time, on the other hand, have not received much attention until very recently. Interestingly, two distinct formulations of equilibrium stopping rules emerge from recent developments, and they, in some spirit, correspond to weak and strong equilibria in the control case. In \cite{EWZ2018} and \cite{CL17}, the derivative-type operation as in \eqref{weak} is followed closely to define an equilibrium stopping rule, which corresponds to a weak equilibrium in our case. On the other hand, \cite{HN18} and \cite{HNZ17} define an equilibrium by comparing the value of sticking to future selves' strategy and the value of deviating to another strategy at current time. This is similar to the comparison in \eqref{discrete def}, and thus in principle closer to a strong equilibrium in our case. It is of interest to investigate the precise relation between these two types of equilibrium stopping rules, as what we have done here for strong and weak equilibria.


\appendix

\section{Proofs for Section~\ref{sec:results}}

\subsection{Proof of Lemma~\ref{lem:asymptotics}}\label{sec:asymptotics}
Since $X$ evolves according to $Q$ on the time interval $[0,\eps]$, $\P(X_\eps =i\mid X_0=i) = 1+ q_{ii} \eps + o(\eps)$ and $\P(X_t=j\mid X_0=i) = q_{ij} \eps + o(\eps)$. This, together with \eqref{integrability}, implies that
\begin{align}\label{first}
F(i,Q\otimes_{\eps} Q^*) &=  \E_i\left[\int_0^\eps f(t,X_t,Q_{X_t})dt\right] + F_{\eps}(i,Q^*)(1+q_{ii} \eps)+\sum_{j\neq i}F_{\eps}(j,Q^*) q_{ij} \eps + o(\eps)\nonumber\\
&= F_{\eps}(i,Q^*) + \E_i\left[\int_0^\eps f(t,X_t,Q_{X_t})dt\right] + (F_\eps(Q^*)\cdot Q_i)\eps +o(\eps). 
\end{align}
Consider 
\begin{equation}\label{tau}
\tau:= \inf\{t\ge 0: X_t\neq X_0\}.
\end{equation}
Given that $X_0=i$, recall that $\tau$ is exponentially distributed with parameter $\lambda = -q_{ii}$, and thus
\begin{equation}\label{q_ii}
\lim_{\eps\downarrow 0} \frac{\P(\tau\le \eps\mid X_0 =i)}{\eps} = \lim_{\eps\downarrow 0} \frac{1-e^{q_{ii} \eps}}{\eps} = -q_{ii}. 
\end{equation}
Now, observe that 
\begin{align}
\E_i\left[1_{\{\tau > \eps\}} \int_0^\eps f(t,X_t,Q_{X_t})dt\right] &= \E_i\left[1_{\{\tau > \eps\}} \int_0^\eps f(t,i,Q_{i})dt\right] = \P(\tau> \eps\mid X_0 =i) \int_0^\eps f(t,i,Q_{i})dt\notag\\
&= (1+q_{ii}\eps + o(\eps)) f(s^*,i,Q_i) \eps,\quad \hbox{for some $0<s^* \le  \eps$}\notag\\
&= (1+q_{ii}\eps + o(\eps)) (f(0,i,Q_i) \eps + o(\eps)) = f(0,i,Q_i) \eps + o(\eps),\label{>eps}
\end{align}
where the second line is due to \eqref{q_ii} and the continuity of $t\mapsto f(t,i,Q_i)$ on $[0,\eps]$, and the third line is from the continuity of $t\mapsto f(t,i,Q_i)$ at $0$ from the right. Let $c:= \sup_{i\in S} \|Q_i\|<\infty$. Then, 
\begin{equation}\label{<eps}
\left|\E_i\left[1_{\{\tau \le \eps\}} \int_0^\eps f(t,X_t,Q_{X_t})dt\right]\right|\le 
\int_0^\eps \left(\sup_{i\in S,\ \bq\in D_i,\ \|\bq\|\le c} |f(t,i,\bq)|\right) dt\cdot \P(\tau\le \eps\mid X_0 =i).
\end{equation}
Thanks to \eqref{integrability}, the Lebesgue integral on the right hand side of \eqref{<eps} is finite and converges to 0 as $\eps \to 0$. It then follows from \eqref{q_ii} that the right hand side of \eqref{<eps} is of $o(\eps)$. Combining this and \eqref{>eps}, we obtain from \eqref{first} the desired result \eqref{expand F}. 

Now, taking $Q= Q^*$ in \eqref{expand F} gives
$F(i,Q^*) = F_{\eps}(i,Q^*) + \left[f(0,i,Q^*_i) + F_\eps(Q^*)\cdot Q_i^*\right] \eps + o(\eps).$ 
This, together with \eqref{expand F}, yields
\[
F(i,Q^*)-F(i,Q\otimes_\eps Q^*)=([f(0,i,Q_i^*)+F_\eps(Q^*)\cdot Q_i^*]-[f(0,i,Q_i)+F_\eps(Q^*)\cdot Q_i])\eps+o(\eps).
\]
Hence, to prove \eqref{expand F'}, it remains to show that $F_\eps(i,Q)=F(i,Q)+o(1)$ for all $i\in S$. 
For each $t\ge 0$ and $\eps>0$, define
$$H(t,\eps):=\sum_{i\in S}h(t,\eps,i,Q_i)<\infty,$$
where the finiteness follows from $S$ being a finite set. Under \eqref{f conti.'}, we have
\[
|F_\eps(i,Q)-F(i,Q)|\leq\E_i\left[\int_0^\infty h(t,\eps;X_t,Q_{X_t})dt\right] \le \int_0^\infty H(t,\eps)dt\to 0,\quad \hbox{as $\eps\downarrow 0$}.
\]
where the convergence comes from the dominated convergence theorem and \eqref{h1}. Note that the dominated convergence theorem is applicable here thanks to \eqref{h1} and \eqref{h2}.


\subsection{Proof of Lemma~\ref{lem:asymptotics'}}\label{sec:asymptotics'}

Since $F(i,Q\otimes_\eps Q^*) = \E_i\left[\int_0^\eps f(t,X_t, Q_{X_t}) dt\right] +\E_i\left[\int_\eps^\infty f(t,X_t, Q^*_{X_t}) dt\right]$, we will deal with the two terms on the right hand side one-by-one. 

To handle $\E\left[\int_0^\eps f(t,X_t, Q_{X_t}) dt\right]$, consider the events $A$, $B$, and $C$ that on the interval $[0,\eps]$, the state of $X$ does not change, changes exactly once, and changes twice or more, respectively. Take $\tau$ in \eqref{tau} and recall that it is exponentially distributed with parameter $-q_{ii}$. Thus,
$\P(A) = \P(\tau>\eps\mid X_0=i) = e^{q_{ii}\eps} = 1+ q_{ii}\eps + \frac12 q^2_{ii} \eps^2 + o(\eps^2).$ 
It follows that
\begin{align}
&\E_i\left[\int_0^\eps f(t,X_t,Q_{X_t}) dt\ \middle|\ A\right] \P(A) = \P(A) \int_0^\eps f(t,i,Q_{i}) dt  \nonumber\\
& = \left(1+ q_{ii}\eps + \frac12 q^2_{ii} \eps^2 + o(\eps^2)\right) \left(\int_0^\eps \left( f(0,i,Q_i) + t f_t(0,i,Q_i)\right) dt +o(\eps^2)\right)\label{Tt}\\
& = \left(1+ q_{ii}\eps + \frac12 q^2_{ii} \eps^2 + o(\eps^2)\right) \left( f(0,i,Q_i) \eps + \frac12 f_t(0,i,Q_i)\eps^2 + o(\eps^2)\right)\notag\\
&=  f(0,i,Q_i) \eps + \left(q_{ii} f(0,i,Q_i) + \frac12 f_t(0,i,Q_i)\right) \eps^2 +o(\eps^2).\label{A part}
\end{align}
Here, \eqref{Tt} follows from the estimate 
\begin{equation}\label{for Tt}
\left|\int_0^\eps f(t,i,Q_i) dt -\int_0^\eps \left(f(0,i,Q_i)+t f_t(0,i,Q_i)\right)dt \right| \le \int_0^\eps r(0,t;i,Q_i)dt = \eps r(0,t(\eps);i,Q_i),
\end{equation}
for some $0<t(\eps)< \eps$. By \eqref{Taylor's}, the last term above is $o(\eps^2)$. 
On the other hand, let $\eta$ be the density function of $\tau$, given that $\tau\le \eps$. Since $\P(\tau\le \ell \mid\tau\le \eps) = \frac{\P(\tau\le\ell)}{\P(\tau\le\eps)} = \frac{1-e^{
q_{ii}\ell}}{1-e^{q_{ii}\eps}}$ for all $\ell\in(0,\eps]$, 
\begin{equation}\label{eta}
\eta(\ell) = \frac{d}{d\ell}\left(\frac{1-e^{q_{ii}\ell}}{1-e^{q_{ii}\eps}}\right) = \frac{-q_{ii}e^{q_{ii}\ell}}{1-e^{q_{ii}\eps}},\quad \ell \in (0,\eps]. 
\end{equation}
Let $\tau':= \inf\{t\ge \tau: X_t\neq X_\tau\}$. Observe that $B=\{\tau\le \eps<\tau'\}$, and thus
\begin{align*}
&\E_i\left[\int_0^\eps f(t,X_t,Q_{X_t}) dt\ \middle|\ B\right] \P(B)\\ 
&=\P(\tau\le \eps) \E\left[\left(\int_0^{\tau} f(t,i,Q_{i}) dt+\int_\tau^\eps f(t,X_t,Q_{X_t})dt\right) \P\left(\tau'>\eps\mid\tau \right) \middle|\ \tau\le \eps \right]\\
& = (1-e^{q_{ii}\eps}) \sum_{j\neq i} \frac{-q_{ij}}{q_{ii}} \int_0^\eps \left(\int_0^{\ell} f(t,i,Q_{i}) dt+\int_\ell^\eps f(t,j,Q_{j})dt \right) \eta(\ell) e^{q_{jj}(\eps-\ell)} d\ell\\
&= \sum_{j\neq i} q_{ij} \int_0^\eps \left(\int_0^{\ell} f(t,i,Q_{i}) dt+\int_\ell^\eps f(t,j,Q_{j})dt \right) e^{q_{ii}\ell} e^{q_{jj}(\eps-\ell)} d\ell,
\end{align*}
where the third line takes advantage of the fact
\begin{equation}\label{i to j}
\P(X_\tau=j \mid X_0=i) = {-q_{ij}}/{q_{ii}}\quad \forall j\neq i, 
\end{equation}
while the fourth line follows from \eqref{eta}. By estimates similar to \eqref{for Tt}, we get
\begin{align}
\E_i\bigg[\int_0^\eps f(t,X_t,Q_{X_t}) dt\ \bigg|\ B\bigg] \P(B)&= \sum_{j\neq i} q_{ij} \int_0^\eps \left( f(0,i,Q_{i})\ell + f(0,j,Q_{j})(\eps-\ell) \right)  d\ell + o(\eps^2)\notag\\
&= \frac12\bigg(-q_{ii} f(0,i,Q_i) + \sum_{j\neq i} q_{ij}  f(0,j,Q_j)\bigg) \eps^2 + o(\eps^2). \label{B part}
\end{align}
Thanks to \eqref{i to j}, we also have the estimate
\begin{align}
\P(C) &= \P(\tau< \eps, \tau'\in (\tau,\eps] \mid X_0=i) = \P(\tau<\eps \mid X_0=i) \sum_{j\neq i}\frac{-q_{ij}}{q_{ii}}\P(\tau'\in (\tau,\eps] \mid \tau<\eps, X_\tau=j)\notag\\
& \le (1-e^{q_{ii}\eps}) \sum_{j\neq i} \frac{-q_{ij}}{q_{ii}} (1-e^{q_{jj}\eps}) = O(\eps^2).\label{C part}
\end{align}
Now, using the fact that
\[
\P(X_\eps = j \mid X_0=i) = \left(e^{Q\eps}\right)_{ij} = \left(I + Q\eps + \frac12 Q^2\eps^2\right)_{ij} + o(\eps^2),\quad \forall i,j\in S,  
\]
a direct calculation shows 
\[
\E_i\left[\int_\eps^\infty f(t,X_t, Q^*_{X_t}) dt\right] = F_\eps(i,Q^*) + (Q_i\cdot F_\eps(Q^*))\eps + \frac12 ( (Q^2)_i\cdot F_\eps(Q^*))\eps^2 + o(\eps^2). 
\]
This, together with \eqref{A part}, \eqref{B part}, and \eqref{C part}, implies
\begin{align}
F(i,Q\otimes_\eps Q^*) =  F_\eps(i,Q^*) &+ (f(0,i,Q_i)+Q_i\cdot F_\eps(Q^*)) \eps \notag\\
&+ \frac12\left(  f_t(0,i,Q_i) +Q_i\cdot \vec f(0,Q) + (Q^2)_i \cdot F_\eps(Q^*) \right) \eps^2 +o(\eps^2),\label{2rd}
\end{align}
where $\vec f(0,Q) := (f(0,1,Q_1), f(0,2,Q_2),...,f(0,N,Q_N))$. 
Since $f_t$ satisfies \eqref{integrability}, $G(i,Q)$ is well-defined. Observe that
\begin{align*}
\lim_{\eps\downarrow 0}\left|\frac{F_\eps(i,Q)-(F(i,Q)+\eps G(i,Q))}{\eps}\right|\le \lim_{\eps\downarrow 0}\E_i\left[\int_0^\infty \frac{r(t,\eps;X_t,Q_{X_t})}{\eps} dt \right] =0,
\end{align*}
where the equality follows from \eqref{Taylor's} and the dominated convergence theorem, which is applicable here as $r(t,\eps;i,\bq)$ satisfies \eqref{h2} and \eqref{r1}. This shows that 
\begin{equation}\label{F_eps expan.}
F_\eps(i,Q)= F(i,Q) + \eps G(i,Q)+ o(\eps),
\end{equation}
and thus we can rewrite \eqref{2rd} as 
\begin{align}
F(i,Q\otimes_\eps Q^*) &= F_\eps(i,Q^*) + (f(0,i,Q_i)+Q_i\cdot F(Q^*)) \eps \notag\\
&+ \frac12\left( f_t(0,i,Q_i)+Q_i \cdot 2 G(Q^*)+ Q_i\cdot \vec f(0,Q)+ (Q^2)_i\cdot F(Q^*) \right) \eps^2 +o(\eps^2).\label{2rd'}
\end{align}
Observe from \eqref{Gamma} that
\[
Q\cdot \vec f(0,Q)^T + Q^2 \cdot F(Q^*)^T = Q\cdot \left(\vec f(0,Q)^T + Q \cdot F(Q^*)^T \right) = Q\cdot \Gamma^{Q^*}(Q)^T,
\]
where $v^T$ denotes the transpose of a vector $v\in\R^N$. This implies $Q_i\cdot\vec f(0,Q) + (Q^2)_i \cdot F(Q^*) = Q_i \cdot \Gamma^{Q^*}(Q)$. We therefore conclude that
\begin{align}
F(i,Q\otimes_\eps Q^*) &= F_\eps(i,Q^*) + \Gamma^{Q^*}(Q_i) \eps + \frac12\bigg( f_t(0,i,Q_i)+Q_i\cdot \left(2 G(Q^*) + \Gamma^{Q^*}(Q) \right) \bigg) \eps^2 +o(\eps^2).\label{2rd'}
\end{align}
By taking $Q= Q^*$ in \eqref{2rd'}, we get the corresponding expansion for $F(i,Q^*)$. Subtracting it from \eqref{2rd'} yields \eqref{2rd expan.}


\subsection{Proof of Proposition~\ref{coro:sufficient for strong E}}\label{sec:sufficient for strong E}

Fix $i\in S$. For any $Q\in \cQ$ with $Q_i \neq Q^*_i$, since \eqref{optimal ROC} holds with strict inequality, we observe from \eqref{o(1)} that $F(i,Q^*)-F(i,Q\otimes_{\eps} Q^*)> 0$ as $\eps>0$ is small enough. 

Now, take an arbitrary $Q\in \cQ\setminus \{Q^*\}$ such that $Q_i = Q^*_i$. Since $Q\neq Q^*$ but $Q_i=Q^*_i$, the collection 
$
S_0 := \{j\in S : Q_j\neq Q_j^*\}
$
must be nonempty. There are two distinct cases. 
\begin{itemize}[leftmargin=*]
\item {\bf Case I:} There is $\ell\in S_0$ such that $q^*_{i\ell}>0$. With $Q_i=Q^*_i$, we deduce from \eqref{2rd expan.} and \eqref{Lambda} that   
\begin{align}
\frac{F(i,Q^*)-F(i,Q\otimes_{\eps} Q^*)}{\eps^2} & = \frac12 Q^*_i\cdot \left(\Gamma^{Q^*}(Q^*) - \Gamma^{Q^*}(Q)\right) + o(1)\notag\\&
= \sum_{j\neq i} q^*_{ij} \left(\Gamma^{Q^*}(Q^*_j) - \Gamma^{Q^*}(Q_j)\right) + o(1). \label{simple 2rd}
\end{align}
Since \eqref{strict condition} entails $\Gamma^{Q^*}(Q^*_j) - \Gamma^{Q^*}(Q_j)> 0$ for $j\in S_0$ and $\Gamma^{Q^*}(Q^*_k) - \Gamma^{Q^*}(Q_k)= 0$ for $k\in S\setminus S_0$,
\[
\sum_{j\neq i} q^*_{ij} \left(\Gamma^{Q^*}(Q^*_j) - \Gamma^{Q^*}(Q_j)\right)\ge  q^*_{i\ell} \left(\Gamma^{Q^*}(Q^*_\ell) - \Gamma^{Q^*}(Q_\ell)\right)>0.
\]
It then follows from \eqref{simple 2rd} that $F(i,Q^*)-F(i,Q\otimes_{\eps} Q^*)>0$ as $\eps>0$ is small enough.
\item {\bf Case II:} $q^*_{ij}=0$ for all $j\in S_0$. Consider the stopping time
$
\tau := \inf\{t\ge 0: X_t\in S_0\}.
$
Note that $\tau>0$ as $i\notin S_0$, and that $Q_{X_t}=Q^*_{X_t}$ for all $t<\tau$. If there exists $\eps^*>0$ such that $\P(\tau\le \eps^*)=0$, then $F(i,Q\otimes_\eps Q^*) =F(i, Q^*)$ for all $0<\eps<\eps^*$. Therefore, we assume in the following that $\P(\tau\le \eps)>0$ for all $\eps>0$. For any $\eps>0$, 
\begin{align*}
F&(i,Q^*) - F(i,Q\otimes_\eps Q^*)\\
 &= \E_i\left[\int_0^\infty f(t,X_t,Q^*_{X_t}) dt - \int_0^\infty f(t,X_t,(Q\otimes_\eps Q^*)_{X_t}) dt\ \middle|\ \tau\le \eps \right] \P(\tau\le \eps)\\
&= \E_i\left[\int_\tau^\infty f(t,X_t,Q^*_{X_t}) dt - \int_\tau^\infty f(t,X_t,(Q\otimes_\eps Q^*)_{X_t}) dt\ \middle|\ \tau\le \eps \right] \P(\tau\le \eps)\\
&= \E_i\left[\int_0^\infty f(t+\tau,X^{X_\tau}_t,Q^*_{X^{X_\tau}_t}) dt - \int_0^\infty f(t+\tau,X^{X_\tau}_t,(Q\otimes_{\eps-\tau} Q^*)_{X^{X_\tau}_t}) dt\ \middle|\ \tau\le \eps \right] \P(\tau\le \eps)\\
&= \E_i\left[F_\tau(X_\tau,Q^*) - F_\tau(X_\tau, Q\otimes_{\eps-\tau}Q^*) \ \middle|\ \tau\le \eps \right] \P(\tau\le \eps),
\end{align*}
where $F_\tau$ is defined as in \eqref{F_eps} with $\eps$ therein replaced by $\tau$. 

Consider the distribution function of $\tau$ conditioned on $\tau\le \eps$, i.e. $H^\eps(y):=\P(\tau\le y\mid \tau\le \eps)>0$, $y\in (0,\eps]$. The above equation can be rewritten as
\begin{align}
F&(i,Q^*) - F(i,Q\otimes_\eps Q^*)\notag\\
&= \P(\tau\le \eps) \sum_{j\in S_0} \P(X_\tau =j) \int_0^\eps \big(F_y(j,Q^*) - F_y(j, Q\otimes_{\eps-y}Q^*)\big) dH^\eps(y).\label{F-F}
\end{align}
By the proof of Lemma~\ref{lem:asymptotics'} in Appendix~\ref{sec:asymptotics'}, particularly \eqref{F_eps expan.}, we have
\begin{align}\label{F-F in int}
&F_y(j,Q^*) - F_y(j, Q\otimes_{\eps-y}Q^*)\notag\\
& = \big(F(j,Q^*) - F(j, Q\otimes_{\eps-y}Q^*)\big) + \big(G(j,Q^*) - G(j, Q\otimes_{\eps-y}Q^*)\big) y + o(y)\notag\\
&=  \left(\Gamma^{Q^*}(Q^*_{j})-\Gamma^{Q^*}(Q_{j})\right) (\eps-y)+\left(\bar\Gamma^{Q^*}(Q^*_{j})-\bar\Gamma^{Q^*}(Q_{j})\right) (\eps-y)y + o(\eps-y)+ o(y),
\end{align}
where $\bar\Gamma^{Q^*}(Q_{i}) : = f_t(0,i,Q_i)+Q_i\cdot G(Q^*)$. In the third line above, the first term follows from \eqref{expand F'} directly, while the second term is obtained by applying Lemma~\ref{lem:asymptotics} to $G$, in place of $F$. Observe from integration by parts that
\begin{align}
\int_0^\eps (\eps-y) dH^\eps(y) &= \eps - \int_0^\eps y dH^\eps(y) = \int_0^\eps H^\eps(y) dy.
\label{O(eps)}
\end{align}
To find a precise asymptotic expansion for $\int_0^\eps H^\eps(y) dy$, we need to analyze $H^\eps(y)$ further. Since $q^*_{ij}=0$ for all $j\in S_0$, to reach a state $j\in S_0$ from the current state $i$, at least two changes of states are needed. Specifically, consider $\tau_1:=\inf\{t\ge 0: X_t\neq i\}$ and $\tau_n:=\inf\{t\ge \tau_{n-1}:X_t\neq X_{\tau_{n-1}}\}$ for all $n\ge 2$. Let $\hat n := \inf\{n\in\N : \P_{i,Q^*}(X_{\tau_n}\in S_0)>0\}\ge 2$. 
By direct calculation,  
\begin{align}\label{K}
\P(\tau\le \eps) &=  K \eps^{\hat n} + o(\eps^{\hat n+1}),\quad \hbox{with}\quad  K:= \sum_{j_n\in S,\ j_{\hat n}\in S_0} q_{i,j_1} \cdot q_{j_1, j_2} \cdot ... \cdot q_{j_{\hat n-1}, j_{\hat n}} >0.
\end{align}
Note that $K$ is strictly positive by the definition of $\hat n$. We therefore obtain
\begin{align*}
H^\eps(y) = \frac{\P(\tau\le y)}{\P(\tau\le \eps)} = \frac{K  y^{\hat n} + o(y^{\hat n+1})}{K  \eps^{\hat n} + o(\eps^{\hat n+1})}.
\end{align*}
Since $H^\eps(y) \approx \frac{y^{\hat n}}{\eps^{\hat n}}$ as $\eps>0$ small, the leading-order term of $\int_0^\eps H^\eps(y) dy$ is $\int_0^\eps \frac{y^{\hat n}}{\eps^{\hat n}} dy = \frac{\eps}{\hat n+1}$. This, together with \eqref{O(eps)}, gives
\begin{equation}\label{O(eps)'}
\int_0^\eps (\eps-y) dH^\eps(y) = \int_0^\eps H^\eps(y) dy= \frac{\eps}{\hat n+1} + o(\eps),
\end{equation}
A calculation similar to \eqref{O(eps)} yields
\begin{equation}
\int_0^\eps (\eps-y)y dH^\eps(y) =-\eps  \int_0^\eps H^\eps(y) dy +2\int_0^\eps H^\eps(y) y dy = O(\eps^2). \label{O(eps^2)}
\end{equation}
Now, thanks to \eqref{F-F in int}, \eqref{K}, \eqref{O(eps)'}, and \eqref{O(eps^2)}, we deduce from \eqref{F-F} that
\begin{align}\label{F-F'}
F&(i,Q^*) - F(i,Q\otimes_\eps Q^*)\notag\\
&=\sum_{j\in S_0} \P(X_\tau =j) \left(\Gamma^{Q^*}(Q^*_j) - \Gamma(Q_j)\right) \frac{K}{\hat n+1} \eps^{\hat n+1}+ O(\eps^{\hat n+2}).
\end{align}
Since $\P(\tau\le \eps)>0$, there must exist $j\in S_0$ such that $\P(X_\tau =j)>0$. Also, recall that \eqref{strict condition} implies $\Gamma^{Q^*}(Q^*_j) - \Gamma(Q_j)>0$ for all $j\in S_0$. Hence, the constant in front of $\eps^{\hat n+1}$ in \eqref{F-F'} is strictly positive, which implies that $F(i,Q^*) - F(i,Q\otimes_\eps Q^*)>0$ as $\eps>0$ small enough. 
\end{itemize}
We have shown that for any $i\in S$ and $Q\in\cQ$, $F(i,Q^*) - F(i,Q\otimes_\eps Q^*)\ge 0$ as $\eps>0$ small enough. That is, $Q^*$ is a strong equilibrium. 


\subsection{Proof of Theorem~\ref{thm:existence}}\label{sec:existence}

Define the set-valued map $\Phi: \cQ\to 2^{\cQ}$ by
\[
\Phi(Q) := \bigg\{R\in\cQ:\ R_i\in\argmax_{\bq\in D_i}\left[f(0,i,\bq)+F(Q)\cdot\bq\right],\quad \forall i\in S\bigg\}.
\]
For each $Q\in\cQ$, the compactness of $D_i$ and the continuity of the map $\bq\mapsto f(0,i,\bq)+F(Q)\cdot\bq$, for all $i\in S$, imply that $\Phi(Q)\neq\emptyset$. The same continuity also gives the closedness of $\Phi(Q)$. On the other hand, by the concavity of $\bq\mapsto f(0,i,\bq)$, $\Phi(Q)$ is convex.  

Next, we show that $\Phi$ is upper semicontinuous. Since $D_i$ is compact for all $i\in S$, $\cQ$ is also compact. The upper semicontinuity of $\Phi$ is then equivalent to the sequential characterization: for any $\{R^n\}_{n\in\N}$ and $\{Q^n\}_{n\in\N}$ in $\cQ$ with $R^n\to R$, $Q^n\to Q$, and $R^n\in\Phi(Q^n)$, we have $R\in\Phi(Q)$. To prove this, it suffices to show that the map 
\begin{equation}\label{needed conti.}
(\bq,Q)\mapsto f(0,i,\bq)+F(Q)\cdot\bq\quad \hbox{is continuous},\quad \hbox{for all $i\in S$}.
\end{equation}
Indeed, given $\bar R\in\cQ$, 
$R^n\in\Phi(Q^n)$ implies  $f(0,i,R^n_i)+F(Q^n)\cdot R^n_i \ge f(0,i,\bar R_i)+F(Q^n)\cdot \bar R_i$ for all $i\in S$. We can then conclude from \eqref{needed conti.} that $f(0,i,R_i)+F(Q)\cdot R_i \ge f(0,i,\bar R_i)+F(Q)\cdot \bar R_i$ for all $i\in S$, as $n\to\infty$. Since $\bar R\in\cQ$ is arbitrarily chosen, this shows that $R\in\Phi(Q)$. 

Proving \eqref{needed conti.} boils down to establishing the continuity of 
\[
Q \mapsto F(Q) = \left(F(1,Q), F(2,Q), ... ,F(N,Q)\right). 
\]
Take $\{Q^n\}_{n\in\N}$ in $\cQ$ such that $Q^n\rightarrow Q\in\cQ$. Denote by $\mu^n$ and $\mu$ the laws of $X$ under $Q^n$ and $Q$, respectively. Note that $\mu^n$ and $\mu$ are probability measures on $D([0,\infty);S)$, the space of c\`{a}dl\`{a}g processes taking values in $S$. 
By \cite[p. 262, Problem 8]{EK-book-86}, $Q^n\rightarrow Q$ implies that $\mu^n$ converges weakly to $\mu$. 
Then, by the Skorokhod representaion theorem, there exists c\`{a}dl\`{a}g processes $Y^n$ and $Y$, defined on the same probability space $(\Omega,\mathcal{F},P)$, such that the laws of $Y^n$ and $Y$ are $\mu^n$ and $\mu$ respectively, and  
$Y^n\rightarrow Y$ under the Skorokhod topology on $D([0,\infty);S)$ $P$-a.s. 
In particular, we have $Y_t^n\rightarrow Y_t$, $P\times dt\text{-a.e}$. Since $S$ is a finite set, we in fact have $Y^n_t = Y_t$ for $n$ large enough, $P\times dt\text{-a.e}$. 
Then
$$F(i,Q^n)=\E^P\left[\int_0^\infty f(t,Y_t^n,Q_{Y_t^n}^n)dt\right]\rightarrow\E^P\left[\int_0^\infty f(t,Y_t,Q_{Y_t})dt\right]=F(i,Q).$$
This establishes the continuity of $Q \mapsto F(Q)$, and thus gives the upper semicontinuity of $\Phi$.

Now, we can apply Kakutani-Fan's fixed-point theorem (see e.g. \cite[Theorem 1]{Fan52}) to conclude that $\Phi$ admits a fixed point $Q^*\in\cQ$, i.e. $Q^*\in \Phi(Q^*)$. This implies that $Q^*$ satisfies \eqref{optimal ROC} for all $(i,Q)\in S\times\cQ$, and is thus a weak equilibrium, thanks to Theorem~\ref{thm:weak E iff}. 

If $f(0,i,\cdot)$ is strictly concave for all $i\in S$, 
then the fixed point $Q^*$ satisfies strict inequalities $f(0,i,Q^*_i)+F(Q^*)\cdot Q^*_i > f(0,i, R_i)+F(Q^*)\cdot R_i$ for all $i\in S$ and $R\in\cQ$ with $R_i\neq Q^*_i$. That is, \eqref{strict condition} is satisfied. Thus, Proposition~\ref{coro:sufficient for strong E} asserts that $Q^*$ is a strong equilibrium.


\section{Proofs for Section~\ref{sec:discrete}}

\subsection{Proof of Theorem~\ref{thm:existence'}}\label{sec:existence'}
Define the set-valued map $\Phi: \A\to 2^\A$ by
\[
\Phi(u):=\left\{w\in\A:\ w\in\argmax_{u'\in\A} V(i,u'\otimes_1 u),\ \forall i\in S\right\}.
\]
Fix $u\in\A$. For any $i\in S$, define $g: \A_i\to\R$ by
\[
g(\alpha):=\kappa(0,i,\alpha)+\sum_{j=1}^N\left(\E_{j,u}\left[\sum_{t=0}^\infty \kappa(t+1,X_t,u_{X_t})\right]\cdot\alpha_j\right).
\]
Since $\kappa$ is continuous in $\alpha$, so is $g$. With $\mathfrak P$ being compact, $\A_x\subseteq \mathfrak P$ is also compact, and thus there exists a maximizer $\alpha(i)\in\A_i$ for $g$. By taking $w_i:=\alpha(i)$ for all $i\in S$, we get $w\in\Phi(u)$. For each $i\in S$, the continuity of $g:\A_i\to\R$ and the closedness of $\A_i$ also imply the closedness of the set of optimizers of $g$. It follows that $\Phi(u)$ is closed. Also, $g:\A_i\to\R$ is concave, thanks to the concavity of $\kappa$ in $\alpha$. The set of optimizers of $g$ is then convex, which yields the convexity of $\Phi(u)$.   

Next, we show that $\Phi$ is upper semicontinuous. That is, for any $u^n,u,w^n,w\in\A$ with $u^n\rightarrow u$, $w^n\rightarrow w$, if $w^n\in\Phi(u^n)$ then $w\in\Phi(u)$. It suffices to show that the map $(u',u)\mapsto V(i,u'\otimes_1 u)$ is continuous, for all $i\in S$. 
In view of \eqref{e1} and the concavity of $\kappa(0,i,\cdot)$, $u'\mapsto V(i,u'\otimes_1 u)$ is continuous. It remains to show that for any $j\in S$, the map
\[
h(u):=\E_{j,u}\left[\sum_{t=0}^\infty \kappa(t+1,X_t,u_{X_t})\right],\quad u\in\A
\]
is continuous. Take $\{u^n\}$ in $\A$ with $u^n\rightarrow u$. Fix $\eps>0$. By \eqref{e2}, there exists $T\in\N$ such that
$\sum_{t=T}^\infty\left(\max_{(i,\alpha)\in S\times\fP}\left|\kappa(t,i,\alpha)\right|\right)<\eps.$ 
It follows that  
\begin{eqnarray}
\notag&&\limsup_{n\rightarrow\infty}|h(u^n)-h(u)|\\
\notag&&\leq\limsup_{n\rightarrow\infty}\left|\E_{j,u}\left[\sum_{t=0}^T \kappa(t+1,X_t,u^n_{X_t})\right]-\E_{j,u}\left[\sum_{t=0}^T \kappa(t+1,X_t,u_{X_t})\right]\right|+2\eps\\
\notag&&=\limsup_{n\rightarrow\infty}\left|\sum_{t=0}^T\sum_{k=1}^N \kappa(t+1,k,u^n_k)((u^n)^t)_{jk}-\sum_{t=0}^T\sum_{z=1}^N \kappa(t+1,k,u_k)(u^t)_{jk}\right|+2\eps\\
\notag&&\leq\limsup_{n\rightarrow\infty}\sum_{t=0}^T\sum_{k=1}^N\left|\kappa(t+1,k,u^n(z))((u^n)^t)_{jk}-\kappa(t+1,k,u_k)(u^t)_{jk}\right|+2\eps=2\eps,
\end{eqnarray}
where the matrix $(u^n)^t$ (resp. $u^t$ ) is the $t$-fold product of $u^n$ (resp. $u$). By the arbitrariness of $\eps>0$, $h$ is continuous.

Now, by Kakutani-Fan's fixed-point theorem (see e.g. \cite[Theorem 1]{Fan52}), $\Phi$ admits a fixed point, i.e. there exists $u^*\in\A$ such that $u^*\in\Phi(u^*)$. That is, $u^*$ is an equilibrium.


\subsection{Proof of Lemma~\ref{l1}}\label{sec:l1}
For clarity, in the proof below we will denote by $X$ the continuous-time Markov chain and $Y$ the discrete-time Markov chain, respectively. First, observe that
\begin{align*}
|&H_i^n(u^n)-F_i(Q^*)|\\
&\leq \left|\E_{i,u^n}\left[\sum_{k=0}^\infty f((k+1)\delta_n,Y_k,Q^n_{Y_k})\cdot\delta_n\right]-\E_{i,Q^n}\left[\sum_{k=0}^\infty f((k+1)\delta_n,X_{k\delta_n},Q^n_{X_{k\delta_n}})\cdot\delta_n\right]\right|\\
&\ \ \ \ +\left|\E_{i,Q^n}\left[\sum_{k=0}^\infty f((k+1)\delta_n,X_{k\delta_n},Q^n_{X_{k\delta_n}})\cdot\delta_n\right]-\E_{i,Q^n}\left[\int_0^\infty f(t,X_t,Q^n_{X_t})\,dt\right]\right|\\
&\ \ \ \ +\left|\E_{i,Q^n}\left[\int_0^\infty f(t,X_t,Q^n_{X_t})\,dt\right]-\E_{i,Q^*}\left[\int_0^\infty f(t,X_t,Q^*_{X_t})\,dt\right]\right|.
\end{align*}
Let $I_1^n$, $I_2^n$, and $I_3^n$ denote the second, third, and fourth line, respectively, in the above inequality.

Consider $I_1^n$ first. Let $\eps>0$. Recall $T>0$ given in \eqref{e33}. By \eqref{integrability}, there is $M>T$ such that
\begin{equation}\label{e34}
\int_M^\infty\sup_{i,\bq}|f(t,i,\bq)|\,dt<\eps.
\end{equation}
Then, by \eqref{e33}, we have 
\[
I_1^n\leq\delta_n\sum_{k<M/\delta_n}\left|\E_{i,u^n}\left[f((k+1)\delta_n,Y_k,Q^n_{Y_k})\right]-\E_{i,Q^n}\left[f((k+1)\delta_n,X_{k\delta_n},Q^n_{X_{k\delta_n}})\right]\right|+2\eps.
\]
Observe that
\begin{align*}
\E_{i,u^n}\left[f((k+1)\delta_n,Y_k,Q^n_{Y_k})\right]&=\sum_{j\in S}f((k+1)\delta_n,j,Q^n_{j})((u^n)^k)_{ij}\\
\E_{i,Q^n}\left[f((k+1)\delta_n,X_{k\delta_n},Q^n_{X_{k\delta_n}})\right]&=\sum_{j\in S}f((k+1)\delta_n,j,Q^n_j)((\tilde u^n)^k)_{ij},
\end{align*}
where $\tilde u^n$ is the transition matrix for the Markov chain $(X_{k\delta_n})_k$ induced by $Q_n$. That is, with the probability $\P$ induced by $Q^n=(q_{ij}^n)$, we have $\tilde u_{ij}^n=\P(X_{\delta_n}=j\,|\,X_0=i).$ Note that
$$\tilde u_{ij}^n=
\begin{cases}
1+q_{ii}^n\delta_n+o(\delta_n),& j=i,\\
q_{ij}^n\delta_n+o(\delta_n),& j\neq i,
\end{cases}\ 
=u_{ij}+o(\delta_n).
$$
It follows that $((\tilde u^n)^k)_{ij}=((u^n)^k)_{ij}+k\cdot o(\delta_n)\cdot(1+o(\delta_n))^k.$ Now, since
$$|k\cdot o(\delta_n)\cdot(1+o(\delta_n))^k|\leq \frac{M}{\delta_n}\cdot o(\delta_n)(1+\delta_n)^{M/\delta_n}=o(1),$$
we conclude that
$$I_1^n=\delta_n\sum_{i<M/\delta_n}o(1)+2\eps=o(1)+2\eps\rightarrow 2\eps.$$
By the arbitrariness of $\eps>0$, we get $I_1^n\rightarrow 0$.

Next, we consider $I_2^n$. For any $\eps>0$, recall $M>T$ given in \eqref{e34}. Then,
\[
I_2^n\leq\sum_{k<M/\delta_n}\E_{i,Q^n}\left|f\left((k+1)\delta_n,X_{k\delta_n},Q^n_{X_{k\delta_n}}\right)\delta_n-\int_{k\delta_n}^{(k+1)\delta_n}f\left(t,X_t,Q^n_{X_t}\right)\,dt\right|+2\eps.
\]
Since $f(\cdot,i,\cdot)$ is continuous, it is uniformly continuous on the compact set $[0,T]\times D_i^a$ with $D_i^a:=\{\bq\in D_i:\ ||\bq||\leq a\}$. As a result, there exists a modulus of continuity function $\rho$, independent of $i\in S$ and $n\in\N$, such that
$|f(t,i,Q^n_i)-f(t',i,Q^n_{i})|\leq\rho(|t-t'|).$
Consider
\[
J_k^n:=\left|f\left((k+1)\delta_n,X_{k\delta_n},Q^n_{X_{k\delta_n}}\right)\delta_n-\int_{k\delta_n}^{(k+1)\delta_n}f\left(t,X_t, Q^n_{X_t}\right)\,dt\right|,
\]
and the event
$A_k^n:=\{\text{there is no jump for $X$ in the time interval $(k\delta_n,(k+1)\delta_n]$}\}.$ 
Then, we have
\begin{align*}\E_{i,Q^n}[J_k^n]&=\E_{i,Q^n}[J_k^n|A_k^n]\cdot\P(A_k^n)+\E_{i,Q^n}[J_k^n|(A_k^n)^c]\cdot\P((A_k^n)^c)\\
&=(\rho(\delta_n)\cdot\delta_n)\cdot(1-O(\delta_n))+O(\delta_n)\cdot O(\delta_n)=o(\delta_n).
\end{align*}
As a result,
$$I_2^n\leq\sum_{k<M/\delta_n}o(\delta_n)+2\eps=o(1)+2\eps\rightarrow 2\eps.$$
By the arbitrariness of $\eps>0$, we get $I_2^n\rightarrow 0$.

Finally, from the argument in the proof of Theorem \ref{thm:existence}, we have $I_3^n\rightarrow 0$.

\bibliographystyle{siam}
\bibliography{refs}

\end{document}